\numberwithin{equation}{section}
\theoremstyle{plain}
\newtheorem{Thm}{Theorem}[section]
\newtheorem{Lem}[Thm]{Lemma}
\newtheorem{Prop}[Thm]{Proposition}
\newtheorem{Coro}[Thm]{Corollary}
\theoremstyle{remark}
\newtheorem{Rem}[Thm]{Remark}
\def\cal#1{\mathcal{#1}}
\newcommand{\comment}[1]{}
\newcommand{\ind}{{\bf 1}}
\def\inddd#1{{\ind}_{\left\{#1\right\}}}
\newcommand{\proba}{\mathbb P}
\renewcommand{\P}{\mathbb P}
\newcommand{\esp}{{\mathbb E}}
\newcommand{\inv}{^{-1}}
\newcommand{\eqnh}{\begin{eqnarray*}}
\newcommand{\eqne}{\end{eqnarray*}}
\newcommand{\eqnhn}{\begin{eqnarray}}
\newcommand{\eqnen}{\end{eqnarray}}
\newcommand{\equh}{\begin{equation}}
\newcommand{\eque}{\end{equation}}
\def\summ#1#2#3{\sum_{#1 = #2}^{#3}}
\def\sif#1#2{\sum_{#1=#2}^\infty}
\newcommand{\eqd}{\stackrel{d}{=}}
\def\topp#1{^{(#1)}}
\def\nn#1{{\left\|#1\right\|}}
\def\snn#1{\|#1\|}
\def\abs#1{\left|#1\right|}
\def\ccbb#1{\left\{#1\right\}}
\def\pp#1{\left(#1\right)}
\def\spp#1{(#1)}
\def\mmid{\;\middle\vert\;}
\def\floor#1{\left\lfloor #1 \right\rfloor}
\def\sfloor#1{\lfloor #1 \rfloor}
\def\ceil#1{\left\lceil #1 \right\rceil}
\def\vv#1{{\boldsymbol #1}}
\def\vvx{{\vv x}}
\newcommand{\vvX}{{\vv X}}
\def\qmand{\quad\mbox{ and }\quad}
\def\mwith{\mbox{ with }}
\def\qmwith{\quad\mbox{ with }\quad}
\def\mfa{\mbox{ for all }}
\def\mmas{\mbox{ as }}
\def\wt#1{\widetilde{#1}}
\def\wb#1{\overline{#1}}
\def\what#1{\widehat{#1}}
\def\limn{\lim_{n\to\infty}}
\def\limm{\lim_{m\to\infty}}
\def\limsupn{\limsup_{n\to\infty}}
\def\weakto{\Rightarrow}
\def\Z{{\mathbb Z}}
\def\R{{\mathbb R}}
\def\N{{\mathbb N}}
\def\calA{\mathcal A}
\def\calB{\mathcal B}
\def\calC{\mathcal C}
\def\calF{\mathcal F}
\def\calI{\mathcal I}
\def\calL{\mathcal L}
\def\calM{\mathcal M}
\def\calV{\mathcal V}
\def\calW{\mathcal W}
\def\cal#1{\mathcal{#1}}
\def\topp#1{^{\scriptscriptstyle (#1)}}
\def\Leb{{\rm Leb}}
\def\ddelta#1{\delta_{\pp{#1}}}
\def\PPP{{\rm PPP}}
\newcommand{\Clipb}{C_{{\rm Lip},{\rm b}}}
\newcommand{\EqD}{\overset{d}{=}}
\newcommand{\AC}{{\mathcal{AC}}}
\begin{document}\sloppy

\title[A remarkable example on clustering of extremes]{
A remarkable example on clustering of extremes for regularly-varying stochastic processes}

\author{Shuyang Bai}\address{Shuyang Bai\\
Department of Statistics \\ University of Georgia\\
310 Herty Drive\\
Athens, GA, 30602, USA.}
\email{bsy9142@uga.edu}
\author{Rafa{\l} Kulik}\address{Rafa\l~Kulik\\
Department of Mathematics and Statistics\\
University of Ottawa\\
STEM Complex\\
150 Louis Pasteur Private\\
K1N 6N5 Ottawa, Ontario, Canada.}
\email{rkulik@uottawa.ca}

\author{Yizao Wang}\address{Yizao Wang\\Department of Mathematical Sciences\\University of Cincinnati\\2815 Commons Way\\Cincinnati, OH, 45221-0025, USA.}\email{yizao.wang@uc.edu}

\date{\today}
\begin{abstract}
The stable-regenerative multiple-stable model has been shown recently to have distinct candidate extremal index and extremal index. To understand further this rare phenomenon, two more results are established here for the double-stable model. The first is the convergence of point processes for the clusters of extremes, enhancing the previous result on the weak convergence of random sup-measures. Most interestingly, the second result reveals a new phase transition at the mesoscopic level when computing the asymptotic exceedance probability over a block, $\proba(\max_{k=1,\dots,d_n} X_k>b_n)$, as $n\to\infty$. Here, the mesoscopic level is referred to the fact that the block size $d_n$ is allowed to grow at the rate $n^\rho$ with $\rho\in[0,1]$, while the  threshold $b_n$ is such that $\proba(X_1>b_n)\sim 1/n$. The recently discovered discrepancy between the candidate extremal index and the extremal index is shown to be just a reflection of this phase transition that is prohibited by the anticlustering condition.
\end{abstract}
\maketitle
%
%
%
%

\section{Introduction and main results}
\subsection{Extremes of regularly varying stochastic processes} 
Let $\pp{X_k}_{k\in\N_0}$ be a stationary sequence of  non-negative random variables whose finite-dimensional distributions are jointly regularly varying with a tail index $\alpha>0$. The assumption, in particular, implies a power-law tail of the marginal distribution $\proba(X_0>x)\sim x^{-\alpha} L(x)$ as $x\to\infty$, where $L$ is a  function  slowly varying at infinity. Such processes are also known as regularly varying stochastic processes. We refer the reader to recent monographs \cite{kulik20heavy,mikosch24extreme} for a comprehensive treatment of these processes.
How do their extreme values behave? This question has a long history since the 1980s. The tail index $\alpha$ matters and, of course, the dependence of the sequence also matters. Here we focus on the regime in which the sequence has {\em weak extremal dependence}, 
which will be explained  
after \eqref{eq:macro} below
in a moment. For the sake of simplicity, we consider nonnegative processes in this paper. This choice does not change the qualitative behavior that we shall reveal, but facilitates the presentation by simplifying the analysis at a few places.

  Generally, there are two types of results one aims at establishing. One is to characterize the global extremes by proving macroscopic limit theorems. In this direction, a first result is of the type
  \equh\label{eq:macro}
\limn  \proba\pp{\frac1{b_n}\max_{k=1,\dots,n}X_k\le x} = \exp\pp{-\theta
 x^{-\alpha}},
  \eque
  with $b_n$ be such that $\lim_{n\to\infty}n\proba(X_0>b_n)=1$, and certain constant $\theta\in(0,1]$. When \eqref{eq:macro} holds, we say the sequence has weak extremal dependence  (for the sake of simplicity we shall simply say {\em weak dependence} in the sequel).
   A more refined limit theorem in place of \eqref{eq:macro} is the convergence of point process of $\summ k1n \ddelta{X_k/b_n,k/n}$ in the space of Radon point measures on $(0,\infty)\times[0,1]$. The constant $\theta$ in \eqref{eq:macro} is referred to as the {\em extremal index}, which should be viewed as a measure of weak dependence at macroscopic level: we have $\theta=1$  typically when the stationary sequence consists of extremally independent random variables.

  The other type of results concerns microscopic limit theorems characterizing local clustering of extremes. 
  One such result takes the form of
  \equh\label{eq:micro}
\calL\pp{\left(\frac{X_k}{ X_0 }\right)_{k=0,\ldots,m} \mmid  X_0 >x}  \Rightarrow
\calL\pp{ \left(\Theta_k\right)_{k=0,\ldots,m} },
\eque
for all $m\in\N$, where $\{\Theta_k\}_{k\in\N_0}$ is known as the spectral tail process of $\pp{X_k}_{n\in\N_0}$ in the literature (e.g.~\citep[Proposition 5.2.1]{kulik20heavy}). From the non-negative spectral tail process, one can compute the {\em candidate extremal index} of the original process ({\cite[Proposition 5.6.5]{kulik20heavy}}):
\[
\vartheta:=\esp\pp{\sup_{k\ge 0}\Theta_k^\alpha - \sup_{k\ge 1}\Theta_k^\alpha}.
\]
The spectral tail process provides a probabilistic characterization of the local clustering of extremes. Based on this process, the candidate extremal index should be viewed as a measure of local dependence. 

Extremes for stationary processes with weak dependence have been extensively investigated since the 1980s. A classical monograph was by \citet{leadbetter83extremes}, and the notion of extremal index has been extensively investigated there. The notion of tail process was later introduced in the seminal work by \citet{basrak09regularly}. The extremal index $\theta$ and candidate extremal index $\vartheta$ are two different numerics characterizing global (macroscopic) and local (microscopic) behaviors of extremes, respectively, and hence they do not necessarily agree, although the relation $\theta\le \vartheta$ always holds  \citep[Lemma 7.5.4]{kulik20heavy}. 
In fact,
for most examples of regularly varying processes with weak dependence investigated so far in the literature, the two indices are the same; this fact is well known by now to follow from a general framework by verifying that the processes satisfy a mixing-type condition and a so-called anticlustering condition. Moreover, this has become the standard approach in the literature to show \eqref{eq:macro}: 
first,
 one characterizes local clustering of extremes by establishing the convergence of (spectral) tail process in
 \eqref{eq:micro}, then one establishes the mixing-type condition and the anticlustering condition; see for example \citep{kulik20heavy,mikosch24extreme}. 

A recent key development is based on a remarkable process, known as the {\em stable-regenerative multiple-stable model},  studied in \citep{bai23tail,bai24phase}.  
This example stems from the studies of stochastic processes with long-range dependence \citep{samorodnitsky16stochastic,pipiras17long}. 
The model in the so-called subcritical regime has weak dependence in the sense of \eqref{eq:macro}; however, it is qualitatively different from all the other examples we are aware of in the sense that 
\[
\theta<\vartheta.
\]
This model provides challenges and poses new questions. First, the aforementioned standard approach to establish \eqref{eq:macro} no longer works. Second, it is natural to ask for a further explanation on the discrepancy between the two indices. 

We continue the recent investigations in \citep{bai24phase,bai23tail}. 
The contributions of this paper are the following on the stable-regenerative {\em double-stable} model (multiplicity $p=2$, and also in the subcritical regime in the sense of \citep{bai24phase} which we do not repeat in the sequel). 
In the sequel, we write $a_n\sim b_n$ if $\limn a_n/b_n = 1$. 
\begin{enumerate}[(i)]
\item We provide a point-process convergence of cluster of extremes for the model, a result strictly stronger than \eqref{eq:macro}, which was established in \citep{bai24phase}. Our proof is based on several steps of approximations, including a 
de-aggregation step and a Poisson approximation by the two-moment method. The same 
ideas have
 been applied in \citep{bai24phase} for the first time, and several improvements have been made here.
\item 
We reveal 
a new and
 more refined phase transition for the extremes, at the {\em mesoscopic level}. We introduce a statistic $\vartheta_\rho$ on the extremes that interpolates between 
\[
\vartheta = \vartheta_0 \qmand \theta= \vartheta_1.
\] More precisely,  we establish {for $d_n\to\infty, d_n = o(n)$, and $\limn\log d_n/\log n = \rho\in[0,1]$,}
\equh\label{eq:meso}
\proba\pp{\frac1{b_n}\max_{k=1,\dots,d_n}X_j>y} \sim \vartheta_\rho d_n\proba\pp{\frac{X_0}{b_n}> y}.
\eque
Such a regime is not 
present under a general anticlustering condition that is satisfied by most of the models investigated in the literature (under such a condition, \eqref{eq:meso} holds with $\vartheta_\rho$ replaced by $\theta$ for all $\rho\in[0,1]$). We will call $\vartheta_\rho$ a {\em mesoscopic extremal index}.
\item Our model is essentially an aggregation model. At the technical level, we reveal that extremes of which parts/layers of the model contribute to the mesoscopic limit (of probability of order $d_n/n$) but not to the macroscopic limit (of probability $O(1)$). 
\end{enumerate}
In words, this paper and \citep{bai24phase,bai23tail} all combined provide a non-trivial example that shows that when the anticlustering condition is violated, a new regime regarding the extremes behavior may appear between microscopic and macroscopic levels. It is left to further research that whether other models with weak extremal dependence but violating the anticlustering condition may exhibit a similar phase transition at mesoscopic level as revealed here. 
\begin{Rem}
Stable-regenerative models have attracted interest for their own sake. The original stable-regenerative model (with multiplicity one) was introduced by \citet{rosinski96classes}. It exhibits long-range dependence and has become a classic example in the studies of stochastic processes with long-range dependence. The extremes for the original model were first extensively studied in \citep{samorodnitsky04extreme,samorodnitsky19extremal}. The multiple-stable version of the model was then introduced in \citep{bai20functional}, and the model may exhibit either short-range (weak) or long-range dependence depending on the relation between the multiplicity parameter $p\in\N$ and the renewal parameter $\beta\in(0,1)$, as shown in \citep{bai23tail,bai24phase}. In particular, the subcritical regime $p\beta-p+1<0$ has been shown to exhibit a weak dependence in the sense of \eqref{eq:macro}. Here, we study the double-stable model (so $p=2$) in the subcritical regime (so $\beta<1/2$). Another interesting direction is to consider extremes of stable-regenerative models with other types of tails, and the extremes have recently been shown to have new scaling limits \citep{chen22extremal,chen24new}.
\end{Rem}

\subsection{The stable-regenerative double-stable model}  

The family of processes of our interest has a tail parameter $\alpha\in(0,1)$, a memory parameter $\beta\in(0,1/2)$.  The representation is intrinsically related to renewal processes, 
 for which we introduce some notation. Consider a discrete-time renewal process  with the consecutive renewal times denoted by 
 $\vv\tau:=\{\tau_0,\tau_1,\dots\} \subset \N_0:=\{0,1,2,\ldots\}$.
  Here $\tau_0$ is the initial renewal time, and the inter-renewal times $(\tau_{i}-\tau_{i-1})_{i\ge 1}$ are i.i.d.\  $\N$-valued with cumulative distribution function $F$, that is, $F(x) = \proba(\tau_{i}-\tau_{i-1}\le x), i\in\N, x\ge 0$. Denote the probability mass function by $f(n) = \proba(\tau_{i}-\tau_{i-1} = n), n\in\N$. Throughout, we assume
\equh\label{eq:F}
\wb F(x) = 1-F(x) \sim \mathsf C_Fx^{-\beta} \mmas x\to\infty \qmwith \beta\in(0,1/2),
\eque
which  implies  an infinite mean,
and the following technical assumption
\[
\sup_{n\in\N}\frac{nf(n)}{\wb F(n)}<\infty.
\]
 By default, a renewal process starts at renewal at time 0, and hence $\tau_0 = 0$. 
 Note that our renewal processes may be {\em delayed}, that is, $\tau_0$  is not necessarily zero, and may be a random variable in $\N_0 = \N\cup\{0\}$, and we shall be specific when this is the case.
An important notion is the {\em stationary delay measure} of the renewal process, denoted by $\pi$. More precisely, $\pi$ is supported on $\N_0$ with
\[
\pi(k)\equiv \pi(\{k\}) = \wb F(k) = 1-F(k), \quad k\in\N_0.
\]
(For the sake of simplicity, we do not distinguish $\pi(k)$, the mass function at $k\in\N_0$, from $\pi(\{k\})$, the measure evaluated at the set $\{k\}$.) Note that the stationary delay  measure $\pi$ is a $\sigma$-finite and infinite measure on 
$\N_0$,
 since the renewal distribution has infinite mean.

We then provide a series representation of the model.  
Consider
\[
\sif i1\ddelta{y_i,d_i}\eqd \PPP\pp{(0,\infty]\times\N, \alpha x^{-\alpha-1}dxd\pi},
\]
where the right-hand side is understood as a Poisson point process on $(0,\infty]\times \N_0$ with intensity $\alpha x^{-\alpha-1}dxd\pi$. 
In addition, let $\{\vv\tau\topp{i,d_i}\}_{i\in\N}$ denote, given the above Poisson point process, conditionally independent delayed renewal processes, each $\vv\tau\topp{i,d_i}$ with initial renewal time delayed at
$\tau_0 = d_i$ and   inter-renewal times following $F$. Write also $\vv \tau^{(i)}=\vv\tau^{(i,0)}$, $i\in \N$, namely, i.i.d.\ copies of $\vv \tau$ which starts at the origin.

In this paper,  we consider the {\em stable-regenerative double-stable model} given by
\equh\label{eq:series infty p}
\pp{X_k}_{k\in\N_0} \eqd\pp{\sum_{0<i_1<i_2<\infty} y_{i_1}y_{i_2} \inddd{k\in \vv\tau\topp{i_1,d_{i_1}}\cap \vv\tau\topp{i_2,d_{i_2}}   }}_{k\in\N_0}.
\eque
The time index $k$ of the  stationary process defined above can be extended to $k\in \Z$ by Kolmogorov extension theorem; in fact, an explicit representation using two-sided renewals can be written down as in  \cite[Section 2.3]{bai23tail}. 
\begin{Rem}\label{rem:nonneg}
Note that this model differs from the one considered in \cite{bai23tail} in that the Poisson jumps $y_i$'s there were instead governed by a symmetric mean $|x|^{-\alpha-1}dx$ on $\R\setminus\{0\}$  with a wider range of $\alpha\in (0,2)$, so that $y_i$'s also take negative values. We consider this modified model \eqref{eq:series infty p}  to avoid the distraction of handling negative values, and the phenomena of interest are mainly due to the dependence structure  induced by $\vv\tau\topp{i_1,d_{i_1}}\cap \vv\tau\topp{i_2,d_{i_2}}$, $i_1<i_2$.
\end{Rem}
It is known (\cite{doney97onesided}; see also \cite{bai23tail}) that (recall $\mathsf C_F$ from \eqref{eq:F}) 
\begin{equation}\label{eq:u(n) asymp}
u(n) := \proba(n\in\vv\tau\topp{1,0})\sim \frac{n^{\beta-1}}{\mathsf C_F \Gamma(\beta)\Gamma(1-\beta)},
\end{equation}
as $n\rightarrow\infty$, and with $\beta\in(0,1/2$), we have
\equh\label{eq:qFp}
\mathsf q_{F,2} := 
\left(\sif n0 u(n)^2\right)^{-1}  \in(0,1),
\eque 
which is the parameter of the geometric random variable $|\vv \tau^{(1)}\cap \vv \tau^{(2)}|$, i.e., 
{$\proba(|\vv\tau\topp1\cap\vv\tau\topp2| = k) = (1-\mathsf q_{F,2})^{k-1}\mathsf q_{F,2}, k\in\N$,} and $\esp |\vv \tau^{(1)}\cap \vv \tau^{(2)}|=1/\mathsf q_{F,2}$; see \cite[Section 2.1]{bai23tail}.

As mentioned in Remark \ref{rem:nonneg}, the process $\pp{X_k}_{k\in\N_0}$  differs from  the one considered in \cite{bai23tail} in that the latter can take negative values. Nevertheless,  many results in \cite{bai23tail} can be extended to  the current model.  
It follows from an argument similar to \cite[Lemma 3.1]{bai23tail} that as $x\rightarrow\infty$,
$\proba\pp{X_0>x}\sim \alpha  x^{-\alpha} \log(x)/2$. 
Hence with
\equh\label{eq:b_n}
b_n = \pp{\frac12n\log  n}^{1/\alpha},
\eque
we have
\begin{equation}\label{eq:X_0 tail}
\proba(X_0>b_n x)\sim \frac{x^{-\alpha}}{n} , \quad x>0,
\end{equation}
as $n\rightarrow\infty$.  Note that here  $b_n$ differs from the one  in \citep{bai23tail} by a multiplicative factor of $2$ inside the parenthesis in \eqref{eq:b_n}. 

Set 
\begin{equation}\label{eq:Theta_k}
{\vv \Theta := \pp{\Theta_k}_{k\in\N_0}} \qmwith \Theta_k:=\inddd{k\in \vv\tau\topp 1 \cap \vv\tau\topp 2  }, \quad k\in \N_0.
\end{equation}
Note that $\Theta_0 = 1$ by definition.  
 The following result is an adaptation of \cite[Theorem 1]{bai23tail} whose   proof is a straightforward modification that we shall omit. 
\begin{Thm}\label{Thm:tail proc}
For any $m\in \N_0$, the convergence \eqref{eq:micro} holds with the {one-sided} tail process $\vv \Theta$ in  \eqref{eq:Theta_k}. In particular, the candidate extremal index is 
 \[\vartheta=\mathsf q_{F,2}.\]
\end{Thm}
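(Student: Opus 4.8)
The plan is to run the single–big–jump (one–cluster) analysis of the series representation \eqref{eq:series infty p}, taking advantage of the fact that here all summands are non-negative. The key structural claim is: conditionally on $\{X_0>x\}$, with probability tending to $1$ there is a distinguished pair of Poisson atoms $i_1^*<i_2^*$ with delays $d_{i_1^*}=d_{i_2^*}=0$ (so that $0\in\vv\tau\topp{i_1^*,0}\cap\vv\tau\topp{i_2^*,0}$) such that, jointly in $k=0,\dots,m$, $X_0=y_{i_1^*}y_{i_2^*}(1+o_\proba(1))$ and $X_k=y_{i_1^*}y_{i_2^*}\,\indd{k\in\vv\tau\topp{i_1^*,0}\cap\vv\tau\topp{i_2^*,0}}+o_\proba(y_{i_1^*}y_{i_2^*})$ as $x\to\infty$. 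Granting this, $(X_k/X_0)_{k=0}^m$ converges, conditionally on $\{X_0>x\}$, to $\big(\indd{k\in\vv\tau\topp{i_1^*,0}\cap\vv\tau\topp{i_2^*,0}}\big)_{k=0}^m$; and since, given the driving Poisson process, $\vv\tau\topp{i_1^*,0}$ and $\vv\tau\topp{i_2^*,0}$ are conditionally independent renewal processes started at the origin, the pair is distributed as $(\vv\tau\topp1,\vv\tau\topp2)$, so the limiting vector has the law of $(\Theta_k)_{k=0}^m$ from \eqref{eq:Theta_k}. This is \eqref{eq:micro}: the limit being a discrete law, it suffices to match the atom masses, which the above $o_\proba$–estimate does.

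To establish the structural claim I would first discard the atoms with $d_i>m$ (which do not affect $(X_0,\dots,X_m)$) and observe that among the rest only those with $d_i=0$ feed $X_0$; since $\pi(\{0\})=1$ these form a $\PPP(\alpha x^{-\alpha-1}dx)$ on $(0,\infty)$, whose heights I order as $\zeta_1>\zeta_2>\cdots$ (equivalently $\zeta_j=\Gamma_j^{-1/\alpha}$, with $\Gamma_j$ a unit–rate Poisson process). Then $X_0=\zeta_1(\zeta_2+\zeta_3+\cdots)+\sum_{2\le i<j}\zeta_i\zeta_j$ displays the bilinear structure behind the logarithmic correction in $\proba(X_0>x)\sim\tfrac\alpha2 x^{-\alpha}\log x$ (the event $\{X_0>x\}$ requires two delay-$0$ atoms to be simultaneously large, with the pair ranging from balanced to arbitrarily unbalanced). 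Taking that marginal asymptotics as given (it is quoted above, from an argument like \cite[Lemma~3.1]{bai23tail}), the crux is a conditional concentration: given $\{X_0>x\}$, with conditional probability $\to1$ one has $\zeta_1,\zeta_2\to\infty$ while $\zeta_3+\zeta_4+\cdots$ and the total mass of atoms with $1\le d_i\le m$ remain $O_\proba(1)$; hence $\zeta_2+\zeta_3+\cdots=\zeta_2+O_\proba(1)$, $\sum_{2\le i<j}\zeta_i\zeta_j=O_\proba(\zeta_2)=o_\proba(\zeta_1\zeta_2)$, and $X_0=\zeta_1\zeta_2(1+o_\proba(1))$. Writing $i_1^*,i_2^*$ for the atoms of heights $\zeta_1,\zeta_2$ (necessarily of delay $0$) and factoring $y_{i_1^*}=\zeta_1$ out of \eqref{eq:series infty p}, the terms of the sum that involve $i_1^*$ equal $\zeta_1\,\indd{k\in\vv\tau\topp{i_1^*,0}}\sum_{j\ne i_1^*}y_j\indd{k\in\vv\tau\topp{j,d_j}}$, where, on the above event, the inner sum is $\zeta_2\,\indd{k\in\vv\tau\topp{i_2^*,0}}+O_\proba(1)$ uniformly over $k\le m$, while the terms not involving $i_1^*$ sum to $O_\proba(\zeta_2)$. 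Therefore $X_k=\zeta_1\zeta_2\,\indd{k\in\vv\tau\topp{i_1^*,0}\cap\vv\tau\topp{i_2^*,0}}+O_\proba(\zeta_1+\zeta_2)$, and dividing by $X_0$ gives $X_k/X_0=\indd{k\in\vv\tau\topp{i_1^*,0}\cap\vv\tau\topp{i_2^*,0}}+O_\proba(\zeta_1^{-1}+\zeta_2^{-1})$ jointly in $k=0,\dots,m$, which is the structural claim.

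The candidate extremal index then follows in one line: $\Theta_k\in\{0,1\}$ gives $\Theta_k^\alpha=\Theta_k$; $\sup_{k\ge0}\Theta_k=1$ since $\Theta_0=1$; and $\sup_{k\ge1}\Theta_k=\indd{|\vv\tau\topp1\cap\vv\tau\topp2|\ge2}$, which by \eqref{eq:qFp} has expectation $1-\mathfrak q_{F,2}$; hence $\vartheta=\esp\big(\sup_{k\ge0}\Theta_k^\alpha\big)-\esp\big(\sup_{k\ge1}\Theta_k^\alpha\big)=\mathfrak q_{F,2}$. I expect the only genuinely delicate step to be the conditional concentration in the second paragraph: showing that the conditional law of the Poisson configuration given $\{X_0>x\}$ collapses onto configurations with a single dominant pair of delay-$0$ atoms, both diverging, with every other contribution to $(X_0,\dots,X_m)$ negligible and controlled uniformly over $k\le m$ — allowing the dominant pair to be balanced or arbitrarily unbalanced (the logarithmic size of that range being responsible for the $\log x$ in the marginal tail) and excluding configurations with three or more competing atoms. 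This is exactly the mechanism of \cite[Lemma~3.1 and Theorem~1]{bai23tail}, and dropping the signs of the $y_i$'s only removes the corresponding bookkeeping, so those estimates carry over with cosmetic changes.
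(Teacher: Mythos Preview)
Your proposal is correct and follows essentially the same approach as the paper: the paper does not give an independent proof but states that the result is an adaptation of \cite[Theorem~1]{bai23tail} whose proof is a straightforward modification, and your sketch is precisely that modification --- the single-big-pair analysis of the series representation, simplified by the nonnegativity of the $y_i$'s, together with the one-line computation of $\vartheta$ from the $\{0,1\}$-valued $\Theta_k$. Your identification of the conditional concentration (two delay-$0$ atoms dominating, both diverging, with all other contributions $O_\proba(1)$) as the only substantive step is accurate, and this is exactly what the cited argument supplies.
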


\subsection{Main results}\label{sec:main}
Our first result concerns the macroscopic limit of extremes. Set $\ell_0:=\{\vv x\equiv\pp{x_k}_{k\in\Z}:\lim _{k\to\pm\infty}x_k = 0\}$. Let $\wt{\ell}_0$ denote the space of equivalent classes of $\ell_0$ with the equivalence relation $\sim$   given by $\vv {x}\sim \vv y$ if and only if 
\[
 \mathsf B^j \vv x=\vv y
\]
  for some $j\in\Z$, where $\mathsf B$ is the backshift operator defined by $(\mathsf {B}\vv x)_{n}=   {x}_{n-1}$, $n\in \Z$.  The space $\wt\ell_0$ can be made into a complete and separable metric space. We write $[\vv x] :=\{\vv y\in\ell_0:\vv x\sim\vv y\}$ and set 
\equh\label{eq:S}
S := \wt\ell_0\setminus\ccbb{[\vv 0]}. 
\eque
For more details, see Appendix \ref{sec:topo}.

  Set 
  \[
  \calI_{d_n,j}:=\{(j-1)d_n+1,\dots,jd_n\}, j=1,\dots,k_n,
  \]
  with $d_n\rightarrow\infty$, $d_n=o(n)$ as $n\rightarrow\infty$, and 
  throughout we write
\[
k_n:=\floor{\frac n{d_n}}.
\]
 Set 
 \[
  \vv X_{d_n,j}: = \pp{X_{(j-1)d_n+1},X_{(j-1)d_n+2},\dots,X_{jd_n}}, j=1,\dots,k_n.
\]

In the statement of our main results below, we shall need the so-called {\em conditional spectral tail process} \citep[Definition 5.4.6]{kulik20heavy} associated to the process $\{X_k\}_{k\in\Z}$ (extended to a two-sided one). Following the standard notation in the literature, we let $\vv Q$ denote the conditional spectral tail process, which is a random element in $S$. In particular, the law of $\vv Q$ is defined by a change of measure to the law of {\em the two-sided spectral tail process}. Interestingly, the law of $\vv Q$ obtained this way turned out to be the law of the one-sided spectral tail process $\vv\Theta$ viewed as an element in $S$, as shown in Lemma \ref{lem:Q = Theta}. 
Therefore, we shall write
\equh\label{eq:Q = Theta}
\vv Q = \vv\Theta = \pp{\Theta_k}_{k\in\N_0}
\eque
in the sequel with a little abuse of language: $\vv\Theta$ is a one-sided stochastic process indexed by $k\in\N_0$, while $\vv Q$ is its corresponding element in $S$. 
We do not need any properties of or results on conditional spectral tail process in our proofs, but only the explicit description of the law of $\vv Q$ in \eqref{eq:Q = Theta} (i.e., the law of $(\Theta_k)_{k\in\N_0}$ in \eqref{eq:Theta_k}, which is needed in Lemma \ref{Lem:conv Q}). We mention the notion of conditional spectral tail process so that our results can be compared directly to other results involving $\vv Q$ in the literature, see for example \citep[Chapters 6 and 7]{kulik20heavy}.


In the following, suppose $\vv{Q}_\ell$ are i.i.d.\ copies of the {conditional} spectral tail process $\vv Q$.
We shall view each $\vv X_{d_n,j}$ and $\vv{Q}_\ell$ as an element in $\wt\ell_0$ representing its equivalence class (strictly speaking, first as an element in $\ell_0$ by padding zeros on both and left sides of the vector respectively).

\begin{Thm}\label{thm:1}
The following weak convergence of  point processes   on $  S\times [0,1]$ holds for any sequence of integers $\{d_n\}_{n\in\N}$ such that $d_n\to\infty, n/d_n\to\infty$, as $n\rightarrow\infty$:
\equh\label{eq:PPC}
\xi_n:=\summ j1{k_n} \ddelta{\vv  X_{d_n,j}  /b_n, \ j/k_n    }
\weakto  \xi:=\sif\ell1 \ddelta{  \vv{Q}_\ell (\Gamma_\ell/\theta)^{-1/\alpha},\ U_\ell     },
\eque
  with $S$ is as in \eqref{eq:S}, where $\theta=  \vartheta_1 =
  (1-2\beta) \mathsf{q}
    _{F,2}$, $\{U_\ell\}_{\ell\in\N}$ are i.i.d.~random variables uniformly distributed over $(0,1)$, $\{\Gamma_\ell\}_{\ell\in\N}$ are consecutive arrival times of a standard Poisson process, and the two families and $\{\vv Q_\ell\}_{\ell\in\N}$ are independent.
\end{Thm}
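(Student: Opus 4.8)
The plan is to prove \eqref{eq:PPC} by the method of Laplace functionals. Working with boundedly finite point measures on $S\times[0,1]$, where boundedness on $S$ is taken in the $w^\#$ ($\mathbb M_0$) sense — a set being bounded when it is bounded away from $[\vv 0]$, i.e.\ contained in $\{\vv x\in S:\|\vv x\|_\infty>\epsilon\}$ for some $\epsilon>0$ (see Appendix \ref{sec:topo}) — it suffices to show that for every bounded continuous $f:S\times[0,1]\to[0,\infty)$ whose support is bounded away from $[\vv 0]$,
\[
\esp e^{-\xi_n(f)}\ \longrightarrow\ \esp e^{-\xi(f)}=\exp\left(-\theta\int_0^\infty\!\!\int_0^1\esp\big[1-e^{-f(r\vv Q,\,u)}\big]\,du\,\alpha r^{-\alpha-1}\,dr\right),
\]
the right-hand side being the Laplace functional of the Poisson process $\xi$ in \eqref{eq:PPC}. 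Since $\|\vv Q\|_\infty=\sup_k\Theta_k=1$ by \eqref{eq:Theta_k}--\eqref{eq:Q = Theta}, the intensity of $\xi$ on $\{\|\vv x\|_\infty>y\}\times[0,1]$ is $\theta y^{-\alpha}$, consistent with \eqref{eq:macro}; and because $f$ is supported away from $[\vv 0]$, only blocks $j$ with $\|\vv X_{d_n,j}\|_\infty>\epsilon b_n$ contribute to $\xi_n(f)$, so the analysis reduces to counting and describing such ``heavy'' blocks.

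The core is a chain of approximations to the model \eqref{eq:series infty p}, refining the scheme used in \citep{bai24phase}. \emph{(a) Series truncation.} One splits the double sum $X_k=\sum_{i_1<i_2}y_{i_1}y_{i_2}\mathbf 1\{k\in\vv\tau\topp{i_1,d_{i_1}}\cap\vv\tau\topp{i_2,d_{i_2}}\}$ into the finitely many layers $(i_1,i_2)$ that can generate a macroscopic exceedance — essentially those with $y_{i_1}y_{i_2}\gtrsim\epsilon b_n$ and delays $d_{i_1},d_{i_2}=O(n)$, further organized by the relative sizes of $y_{i_1}$ and $y_{i_2}$ — plus a remainder. One then shows the remainder contributes no atoms to $\xi_n$ with probability tending to $1$; crucially this remainder absorbs the ``mesoscopic-only'' layers of item (iii) of the Introduction, whose contribution to $\esp[\xi_n(\{\|\vv x\|_\infty>y\}\times[0,1])]=k_n\,\proba(\max_{k\in\calI_{d_n,1}}X_k>yb_n)$ does \emph{not} vanish — by \eqref{eq:meso} it is exactly what accounts for the excess of $\vartheta_\rho$ over $\theta=\vartheta_1$. \emph{(b) Decoupling.} In the retained model one replaces the renewal layers feeding distinct blocks by mutually independent copies; the resulting error is controlled after checking that a macroscopic cluster — whose exceedances lie on $\vv\tau\topp{i_1}\cap\vv\tau\topp{i_2}$, a finite set whose consecutive spacings are heavy-tailed with tail exponent $1-2\beta<1$, hence of infinite mean — is contained in a single block $\calI_{d_n,j}$ off an event of probability $O(d_n^{2\beta-1})\to 0$, while there are only $O_{\proba}(1)$ macroscopic clusters in $[0,n]$. \emph{(c) Poisson approximation.} For the truncated, decoupled model the point process of heavy blocks is essentially a superposition of independent block contributions, and a Poisson approximation by the two-moment method — matching the first and second factorial moments with those of $\xi$ — gives the convergence. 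The first moment now converges to $\theta$ rather than $\vartheta_\rho$, because each macroscopic cluster sits inside one block and its exceedance count over $[0,n]$ is governed by the macroscopic extremal index $\theta=\vartheta_1=(1-2\beta)\mathsf q_{F,2}$; the limiting atom is the complete cluster profile, distributed by Theorem \ref{Thm:tail proc} and \eqref{eq:Q = Theta} as $\vv Q_\ell(\Gamma_\ell/\theta)^{-1/\alpha}$ and carried at an asymptotically uniform time $U_\ell$.

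Assembling the pieces — bounding the truncation, decoupling, and Poisson-approximation errors and letting $\epsilon\downarrow 0$ last — yields the convergence of Laplace functionals and hence \eqref{eq:PPC}; tightness, absence of escaping mass, and identification of the limit are handled by the standard $w^\#$-convergence machinery on the Polish space $S\times[0,1]$ recalled in Appendix \ref{sec:topo}.

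I expect the main obstacle to be the absence of an anticlustering condition, which in the usual framework would simultaneously keep clusters from straddling block boundaries and pin $\esp\xi_n(A)$ to the ``correct'' $\theta$-governed limit. Here neither is automatic: one must quantify the spreading of a single cluster via the renewal estimates in step (b) to confine macroscopic clusters to single blocks, and — the genuinely new difficulty — one must show that the mesoscopic-only layers, which inflate $\esp\xi_n(A)$ above $\esp\xi(A)$ and so destroy uniform integrability, nonetheless create no atom of the limiting point process, via a delicate first-moment estimate performed after restricting to a typical event. This is precisely where improvements over \citep{bai24phase}, which only required the weaker statement \eqref{eq:macro}, must be made.
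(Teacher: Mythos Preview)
Your high-level plan is on the right track and, in particular, you correctly locate the core obstruction: the mean measure of $\xi_n$ does \emph{not} converge to that of $\xi$ (cf.\ Remark~\ref{rem:why}), so the truncated-out ``mesoscopic-only'' layers cannot be controlled in $L^1$ and the truncation step must be established in probability. The paper's architecture (truncate, compare to a Poissonizable object, two-moment method) matches yours in outline.

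The middle step, however, differs. The paper does \emph{not} decouple blocks. After passing to the triangular-array representation \eqref{eq:p>=1} and truncating deterministically at level $m_n$ in the window \eqref{eq:m_n ppc}, it performs a \emph{de-aggregation within blocks} (Proposition~\ref{prop:5.2}): on a high-probability event at most one pair $(i_1,i_2)$ exceeds $\epsilon$ in any given block, so the aggregated $\vv X^*_{d_n,j}/b_n$ may be replaced by the single-pair profile $c_n^{-1}V^*_{i_1,i_2}\vv Q_{n,d_n,j}^{(i_1,i_2)}$, yielding the point process $\xi_n^\circ$ indexed by triples $(i_1,i_2,j)$. The two-moment Poisson approximation (Proposition~\ref{prop:5.3}) is then applied \emph{directly to the still-dependent} $\xi_n^\circ$, splitting the second moment into a shared-index term $b_{n,1}=O(m_n^{-1})$ and a same-pair-two-blocks term $b_{n,2}$ handled via Lemma~\ref{lem:rho}; the upper constraint on $m_n$ in \eqref{eq:m_n ppc} is precisely what kills $b_{n,2}$. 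The final truncation step $\xi_n\approx\xi_n^*$ (Proposition~\ref{prop:5.4}) is done in probability, not in expectation, exactly as you anticipated.

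Your step (b) handles the same-pair-two-blocks dependence via cluster confinement (your $O(d_n^{2\beta-1})$ is the right order), but it does not address the shared-index dependence: pairs $(i_1,i_2)$ and $(i_1,i_3)$ landing in distinct blocks are correlated through the common jump $y_{i_1}$ and the common renewal layer, and ``each cluster sits in one block'' says nothing about this. To decouple you would still need the analogue of $b_{n,1}\to 0$, which requires a quantitative cap on the number of retained layers; your qualitative criterion $y_{i_1}y_{i_2}\gtrsim\epsilon b_n$ does not obviously supply one. The paper's route is cleaner here: it keeps the between-block dependence intact and lets the two-moment method dispatch both sources simultaneously, so no block-decoupling is ever needed.
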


Our second result concerns the mesoscopic limit of extremes. We present a weaker version here for the sake of simplicity. The full version in  Theorem \ref{thm:2} concerns the convergence of clusters (which implies Theorem \ref{thm:2'} below immediately {for $\rho\in(0,1]$}; see the proof of \cite[Corollary 6.2.6]{kulik20heavy}), and requires certain topological background that might be a distraction for the discussions here.
With $\mathsf{q}_{F,2}$ defined in \eqref{eq:qFp}, set
\[
 \vartheta_\rho:=  (1-2\rho \beta)\mathsf{q}_{F,2}, \quad\rho\in[0,1].
\]
Notice that $\vartheta_0 = \vartheta$ and $\vartheta_1 = \theta$.
\begin{Thm}\label{thm:2'}
Let $\{b_n\}_{n\in\N}$ be as in \eqref{eq:b_n}.
Under the assumption 
\equh\label{eq:d_n}
d_n \to\infty,\quad \limn\frac{\log d_n}{\log n} = \rho\in[0,1], \qmand d_n = o(n) \mbox{ if } \rho = 1,
\eque
for all $x>0$,
\equh\label{eq:block EVT}
\proba\pp{\frac1{b_n}\max_{k=1,\dots,d_n}X_k> x  }\sim\frac{  \vartheta_\rho x^{-\alpha}}{k_n} \sim \vartheta_\rho d_n\proba\pp{\frac{X_0}{b_n}>x},
\eque
as $n\to\infty$.
\end{Thm}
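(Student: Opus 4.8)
The plan is to establish the middle relation $\proba\pp{\max_{k=1,\dots,d_n}X_k>b_nx}\sim\vartheta_\rho x^{-\alpha}/k_n$; the right-hand equivalence in \eqref{eq:block EVT} is then immediate from \eqref{eq:X_0 tail} and $d_n\sim n/k_n$. Since every summand in \eqref{eq:series infty p} is non-negative, $\max_{k\le d_n}X_k$ dominates the largest single pair-product whose cluster meets the window,
\[
M_n:=\max_{0<i_1<i_2}y_{i_1}y_{i_2}\,\inddd{\vv\tau\topp{i_1,d_{i_1}}\cap\vv\tau\topp{i_2,d_{i_2}}\cap\{1,\dots,d_n\}\ne\emptyset},
\]
so $\proba(\max_{k\le d_n}X_k>b_nx)\ge\proba(M_n>b_nx)$. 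For the reverse, I would truncate the double series according to the relative sizes of the two jumps in a pair and show that, for each $\delta\in(0,1)$, the event $\{\max_{k\le d_n}X_k>b_nx,\ M_n\le(1-\delta)b_nx\}$ has probability $o(1/k_n)$ --- i.e.\ discard, at order $1/k_n$, the contribution of the bulk (pairs with $y_{i_1}y_{i_2}\le\varepsilon b_n$), of very unbalanced pairs (one enormous, one tiny jump), and of additive pile-ups of several moderate pair-products at a common index $k$; with $\alpha<1$ and \eqref{eq:F} these estimates are technical but routine once the scales are separated. Letting $\delta\downarrow0$ reduces everything to the asymptotics of $\proba(M_n>b_nx)$, the perturbed thresholds $(1-\delta)b_nx$ being handled by the same computation, which depends on the threshold only through an overall power.

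To compute $\proba(M_n>u)$ for $u$ of order $b_n$ I would organize the count around the larger jump of a pair: $\proba(M_n>u)\sim\esp\bigl[\#\{i_2:E_{i_2}\text{ holds}\}\bigr]$, where $E_{i_2}$ is the event that some $i_1$ with $y_{i_1}\in(u/y_{i_2},\,y_{i_2}]$ has $\vv\tau\topp{i_1,d_{i_1}}$ meeting $K_{i_2}:=\vv\tau\topp{i_2,d_{i_2}}\cap\{1,\dots,d_n\}$; the replacement of $\proba(\bigcup_{i_2}E_{i_2})$ by $\sum_{i_2}\proba(E_{i_2})$ is justified by Bonferroni since both are of order $1/k_n$ and the two-cluster correction is of smaller order. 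Conditioning on $(y_{i_2},d_{i_2})$ and on $\vv\tau\topp{i_2,d_{i_2}}$, Poisson thinning shows the admissible $i_1$ form a Poisson set with mean $\bigl((u/y_{i_2})^{-\alpha}-y_{i_2}^{-\alpha}\bigr)\sum_{m\ge0}\overline F(m)\,\proba(\vv\tau\topp{\ast,m}\cap K_{i_2}\ne\emptyset)$, where $\vv\tau\topp{\ast,m}$ is a generic renewal delayed at $m$. Two facts then drive the evaluation: (a) $\sum_{m=0}^{k}\overline F(m)\,u(k-m)=1$ for every $k$ --- immediate from $\widehat\pi(s)\widehat u(s)=\frac{1-\widehat f(s)}{1-s}\cdot\frac{1}{1-\widehat f(s)}=\frac{1}{1-s}$ --- so that $\sum_m\overline F(m)\,\esp|\vv\tau\topp{\ast,m}\cap K|=|K|$ for any finite $K\subseteq\N$; and (b) the $\overline F$-averaged conditional mean $\esp\bigl[\,|\vv\tau\topp{\ast,m}\cap K_{i_2}|\mid \vv\tau\topp{\ast,m}\cap K_{i_2}\ne\emptyset\,\bigr]$ tends, to leading order and uniformly over the relevant random $K_{i_2}$, to $1/\mathsf q_{F,2}$: a renewal conditioned to meet $K_{i_2}$ latches onto it just before one of its points, after which its overlap with $K_{i_2}$ is governed by the intersection of two independent non-delayed renewals, of geometric$(\mathsf q_{F,2})$ size by \eqref{eq:qFp}. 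Combining (a) and (b), the Poisson mean is $\sim\mathsf q_{F,2}N_{i_2}(u/y_{i_2})^{-\alpha}$ with $N_{i_2}=|K_{i_2}|$, hence $\proba(E_{i_2}\mid\vv\tau\topp{i_2,d_{i_2}})\sim1-\exp(-\mathsf q_{F,2}N_{i_2}(u/y_{i_2})^{-\alpha})$.

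Taking expectations by Mecke's formula over the $i_2$-points, then over the law of $N_{i_2}$ given $d_{i_2}=m$ (whose mean $\nu_m:=\sum_{j=0}^{d_n-m}u(j)\sim(d_n-m)^\beta/(\beta\mathsf C_F\Gamma(\beta)\Gamma(1-\beta))$ by \eqref{eq:u(n) asymp}), and integrating in $y_{i_2}\in(\sqrt u,\infty)$, yields
\[
\proba(M_n>u)\ \sim\ \mathsf q_{F,2}\,u^{-\alpha}\sum_{m=0}^{d_n}\overline F(m)\,\nu_m\Big(\tfrac\alpha2\log u-\log\nu_m+O(1)\Big),
\]
the $\tfrac\alpha2\log u$ coming from the logarithmic range of jump sizes (from balanced, $\asymp\sqrt u$, to unbalanced) and the $-\log\nu_m$ from the saturation of $1-e^{-\,\cdot\,}$. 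With $\overline F(m)\sim\mathsf C_Fm^{-\beta}$ this is a Riemann sum converging, using $\mathsf C_F\int_0^1 t^{-\beta}(1-t)^\beta\,dt\big/\bigl(\beta\mathsf C_F\Gamma(\beta)\Gamma(1-\beta)\bigr)=1$, to $d_n\bigl(\tfrac\alpha2\log u-\beta\log d_n+o(\log n)\bigr)=\tfrac12(1-2\beta\rho)\,d_n\log n\,(1+o(1))$, since $\tfrac\alpha2\log u\sim\tfrac12\log n$ and $\log d_n\sim\rho\log n$ by \eqref{eq:d_n}. As $u^{-\alpha}d_n\log n=(b_nx)^{-\alpha}d_n\log n\sim 2x^{-\alpha}/k_n$ by \eqref{eq:b_n}, we get $\proba(M_n>b_nx)\sim\mathsf q_{F,2}(1-2\beta\rho)\,x^{-\alpha}/k_n=\vartheta_\rho x^{-\alpha}/k_n$, as claimed. (As a check, for a one-point window $K=\{0\}$ one has $N\equiv1$ and multiplicity $1$, and the same integral reproduces $\proba(X_0>u)\sim\tfrac\alpha2 u^{-\alpha}\log u$.)

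The main obstacle is that the naive first moment --- the expected number of pairs $(i_1,i_2)$ with $y_{i_1}y_{i_2}>u$ whose cluster meets the window --- is in fact \emph{infinite}, the divergence coming from a single huge jump paired with arbitrarily small ones; hence neither a direct first-moment bound nor plain inclusion--exclusion is available, and every step, including the upper-bound truncation of the first paragraph, must be carried out inside the Palm/thinning picture above, i.e.\ conditionally on the larger jump (this is also the first-moment content of the two-moment method behind the cluster-level result). The single most delicate ingredient is fact (b): a quantitative and uniform ``hitting-multiplicity'' estimate --- a renewal conditioned to meet the trace of an independent renewal overlaps it in a set of size $\sim$ Geometric$(\mathsf q_{F,2})$ --- together with control of the boundary effects near the window edges $1$ and $d_n$ and of delays $d_{i_2}$ close to $d_n$ (where $K_{i_2}$ is too short for the geometric-cluster picture to apply). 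The remaining pieces --- the scale-separated truncation, the reduction $\esp[N\log N]\sim\esp N\log\esp N$, and the uniformity of the Riemann-sum and Tauberian asymptotics in $n$ over all admissible growth rates of $d_n$ up to $n$ --- are routine modulo the usual bookkeeping of slowly varying corrections.
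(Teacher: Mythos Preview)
Your approach is genuinely different from the paper's and, if carried through, gives the result with the bonus of treating all $\rho\in[0,1]$ uniformly. The paper instead proceeds via the triangular-array representation of Section~\ref{sec:renewal}: it truncates to the first $m_n$ layers $R_{n,1},\dots,R_{n,m_n}$, proves the stronger cluster-measure convergence ${\vv\nu}_{d_n}\vpconv{\vv\nu}\topp\rho$ (Theorem~\ref{thm:2}) through a de-aggregation step, a mean-approximation step, and a truncation estimate (Propositions~\ref{prop:de-agg} and~\ref{prop:truncation}, Lemma~\ref{lem:5.5'}), and reads off \eqref{eq:block EVT} for $\rho\in(0,1]$ as a corollary; the case $\rho=0$ is handled separately by a monotonicity squeeze. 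In the paper's route the index $\rho$ enters through the \emph{rate} of the truncation level $m_n$ (via $\log(nm_n^2/w_n^2)/\log n$ in \eqref{eq:rho_n-a} and the choice \eqref{eq:m_n clust low}), whereas in yours it emerges more transparently from the saturation term $-\log\nu_m\sim-\beta\log d_n\sim-\beta\rho\log n$. Two cautions. First, the upper-bound reduction $\max_k X_k\rightsquigarrow M_n$ that you label ``technical but routine'' is, in substance, exactly the combination of the paper's truncation (Lemma~\ref{lem:5.5'}) and de-aggregation (Section~\ref{sec:de-agg}): one needs a layer cutoff together with the observation that inside finitely many layers at most $p^*$ pairs can be active at any $k$ --- this is the bulk of the work in either approach, not an afterthought, and your direct route must reproduce it at precision $o(1/k_n)$ without the multi-block averaging that the paper exploits. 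Second, your fact~(b) is essentially Lemma~\ref{Lem:conv Q}(ii)--(iii), and your identification of it as the most delicate ingredient is apt; the paper's version is stated conditionally on $R_{n,1,2}\cap\calI_{d_n,1}\ne\emptyset$ rather than conditionally on a realized trace $K_{i_2}$, which sidesteps some of the uniformity you would need.
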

{Note that \eqref{eq:d_n} is slightly weaker than $d_n = n^\rho L(n)$ for a slowly varying function $L$ at infinity.}

The mesoscopic limit theorem actually interpolates microscopic and macroscopic limit theorems as follows.
A counterpart of Theorem \ref{thm:2'} with fixed $d$ can be easily derived using convergence of tail processes, as explained in Lemma \ref{lem:d fixed} (see also the comment afterwards how this is related to $\rho=0$). With $\rho=1$, Theorem \ref{thm:2'} can be related to the macroscopic limit \eqref{eq:macro} in the sense that in the presence of asymptotic tail independence of block maxima, which we believe to hold when $\log d_n/\log n\to 1$ and $d_n = o(n)$, one has
\[
\proba\pp{\frac1{b_n}\max_{k=1,\dots,n}X_k\le x} \sim \pp{1-\proba\pp{\frac1{b_n}\max_{k=1,\dots,d_n}X_k>x}}^{\floor{n/d_n}}\to \exp\pp{-\vartheta_1 x^{-\alpha}}.
\]

\subsection{Comments}
We conclude the introduction with a few comments on our main results. The first two concern the so-called anticlustering condition. 
Recall that given two sequences $\{b_n\}_{n\in\N}$ and $\{d_n\}_{n\in\N}$, we say the anticlustering condition $\AC(d_n,b_n)$ holds, if  
\[
\limm \limsupn \proba\pp{\max_{m  \le |j|\le d_n}X_j>b_n x\mmid X_0>b_n y} = 0, \mfa x,y>0.
\]

First, Theorem \ref{thm:2'} yields that for the anticlustering condition $\calA\calC(d_n,b_n)$ to hold, necessarily $d_n = o(n^\epsilon)$ for all $\epsilon>0$. See Corollary \ref{Cor:ac fails}. A sufficient condition $d_n=o(\log n)$ can be established by adapting the proof of \citep[Lemma 5.2]{bai23tail}. In contrast, for i.i.d.~sequence, as well as for most examples of weakly dependent sequences, the anticlustering condition holds as long as $d_n = o(n)$. This is an extensively investigated and used criteria in the literature for regularly varying stochastic processes with weak dependence. For a continuous version, see \citet{soulier22tail}. For an extension to regularly
varying random fields, see \citet{planinic23palm}.

\begin{Coro}\label{Cor:ac fails}
Let $\{b_n\}_{n\in\N}$ be as in \eqref{eq:b_n} and suppose that the block length sequence   $\{d_n\}_{n\in\N}$  satisfies $d_n\gg n^{\epsilon}$ for some $\epsilon>0$ as $n\rightarrow\infty$. Then
the anticlustering condition $\mathcal{AC}(d_n,b_n)$ fails. 
\end{Coro}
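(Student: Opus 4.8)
The plan is to derive Corollary \ref{Cor:ac fails} directly from the mesoscopic limit theorem (Theorem \ref{thm:2'}) together with the marginal tail asymptotics \eqref{eq:X_0 tail}. The key observation is that the anticlustering condition, when it holds with block size $d_n$, forces the block maximum probability $\proba(b_n\inv\max_{k=1,\dots,d_n}X_k > x)$ to be asymptotically equivalent to $\theta\, d_n\,\proba(X_0/b_n > x)$ with $\theta = \vartheta_1 = (1-2\beta)\mathsf q_{F,2}$, the genuine extremal index; but Theorem \ref{thm:2'} says this ratio converges instead to $\vartheta_\rho = (1-2\rho\beta)\mathsf q_{F,2}$, which is strictly larger than $\vartheta_1$ whenever $\rho < 1$, i.e. whenever $d_n = o(n^\epsilon)$ fails... wait, that is backwards. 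Let me restate: if $d_n \gg n^\epsilon$ for some $\epsilon>0$, then along any subsequence the quantity $\log d_n/\log n$ is bounded below by $\epsilon$, so any subsequential limit $\rho$ satisfies $\rho \ge \epsilon > 0$; passing to a further subsequence we may assume $\log d_n/\log n \to \rho \in [\epsilon,1]$ (if $\rho=1$ we additionally invoke that $d_n=o(n)$ is automatic here since $d_n$ must be $o(n)$ for the block structure to be meaningful — or we simply restrict attention to subsequences where $d_n=o(n)$ which always exist since otherwise the claim is vacuous / handled separately). Then Theorem \ref{thm:2'} applies along this subsequence and gives $\vartheta_\rho = (1-2\rho\beta)\mathsf q_{F,2} < (1-2\beta)\mathsf q_{F,2} = \theta$ strictly, because $\rho < 1$ would be needed for strict inequality — but $\rho$ could equal $1$. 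So the clean route is: the anticlustering condition with $d_n$ would, via the standard single-big-jump/large-deviation heuristic for weakly dependent regularly varying sequences, yield equivalence with $\theta\, d_n\,\proba(X_0/b_n>x)$; I will instead show $\calA\calC(d_n,b_n)$ forces the weaker conclusion that the limit constant does not depend on the rate at which $d_n\to\infty$, whereas Theorem \ref{thm:2'} exhibits genuine rate-dependence once $d_n\gg n^\epsilon$, a contradiction.

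More concretely, here is the argument I would write. Suppose toward a contradiction that $\calA\calC(d_n,b_n)$ holds with $d_n\gg n^\epsilon$. The key consequence of the anticlustering condition that I would use is the following inclusion–exclusion / Bonferroni-type bound: for any fixed $m$,
\[
\proba\pp{\max_{k=1,\dots,d_n}X_k > b_n x} \le \summ k1{d_n}\proba(X_k>b_n x) = d_n\proba(X_0>b_n x),
\]
which is immediate and rate-free, and the matching lower bound obtained by the standard blocking/de-clustering argument under $\calA\calC(d_n,b_n)$:
\[
\liminfn \frac{\proba\pp{\max_{k=1,\dots,d_n}X_k > b_n x}}{d_n\proba(X_0>b_n x)} \ge \theta,
\]
— but actually the cleaner and fully self-contained contradiction uses only Theorem \ref{thm:2'} twice. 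Choose two block-length sequences $d_n^{(1)} = \lfloor n^{\epsilon/2}\rfloor$ and $d_n^{(2)} = \lfloor n^{3\epsilon/4}\rfloor$ (or more simply compare $d_n$ against a reference sequence like $\lfloor\log n\rfloor$): by Theorem \ref{thm:2'} the corresponding block-maximum normalizing constants converge to $\vartheta_{\epsilon/2}$ and $\vartheta_{3\epsilon/4}$ respectively, which are distinct. Since the anticlustering condition $\calA\calC(d_n,b_n)$ — if it held — would by the classical theory (e.g.\ \citep[Corollary 6.2.6]{kulik20heavy}, whose proof is cited just before Theorem \ref{thm:2'}) force the constant in \eqref{eq:block EVT} to equal $\theta$ regardless of the sub-polynomial rate, and since it is monotone in $d_n$ in the sense that $\calA\calC(d_n,b_n)$ implies $\calA\calC(d_n',b_n)$ for $d_n' \le d_n$, validity of $\calA\calC(d_n,b_n)$ for $d_n\gg n^\epsilon$ would in particular give $\calA\calC(d_n',b_n)$ for $d_n' = \lfloor n^{\epsilon/2}\rfloor$, hence force $\vartheta_{\epsilon/2} = \theta = \vartheta_1$, i.e.\ $(1-\epsilon\beta)\mathsf q_{F,2} = (1-2\beta)\mathsf q_{F,2}$, impossible since $\beta\in(0,1/2)$ and $\epsilon$ can be taken small. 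This is the contradiction.

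The main technical point I expect to need to nail down carefully — and the one place where I would be tempted to just cite the literature — is the implication ``$\calA\calC(d_n,b_n)$ $\Rightarrow$ the constant in \eqref{eq:block EVT} equals the extremal index $\theta$, uniformly over all sub-polynomial rates $d_n$.'' The cleanest way is to invoke the general block-maxima theory for regularly varying sequences: under $\calA\calC(d_n,b_n)$ together with the tail-process convergence \eqref{eq:micro} (which holds here by Theorem \ref{Thm:tail proc}), one has $\proba(b_n\inv\max_{k\le d_n}X_k>x)\sim \vartheta\cdot d_n\proba(X_0>b_n x)$ where $\vartheta$ is the \emph{candidate} extremal index — wait, that would give $\vartheta_0$, not $\theta$. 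I need to be careful about which index appears; the honest statement is that under anticlustering the block-maximum constant stabilizes to a \emph{single} rate-independent value (call it $\theta$ per the paper's convention that this is the extremal index). The crux is simply: anticlustering $\Rightarrow$ rate-independence of the constant, whereas Theorem \ref{thm:2'} $\Rightarrow$ rate-dependence once $d_n$ is polynomially large. Reconciling the precise identification of the limiting constant with the existing literature on anticlustering (and making sure I quote the right proposition from \citep{kulik20heavy}) is the part requiring the most care; everything else is a short monotonicity-and-contradiction argument.
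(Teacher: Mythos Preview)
Your approach is essentially the same as the paper's: assume $\calA\calC(d_n,b_n)$ holds, invoke the standard consequence from \citep[Corollary 6.2.6]{kulik20heavy} to get one asymptotic for $\proba(\max_{k\le d_n}X_k>b_nx)$, and contradict it with the mesoscopic asymptotic from Theorem~\ref{thm:2'}. The monotonicity of $\calA\calC$ in the block size is used in both arguments.

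The one genuine gap in your write-up is the identification of the constant that $\calA\calC$ forces. You oscillate between $\theta=\vartheta_1$ and $\vartheta_0$ and never settle it. The correct answer, and the one the paper uses, is $\vartheta_0$: under $\calA\calC(d_n,b_n)$, \citep[Corollary 6.2.6]{kulik20heavy} gives
\[
\frac{\proba(\max_{k\le d_n}X_k>b_nx)}{d_n\proba(X_0>b_n)}\to \vartheta_0\, x^{-\alpha},
\]
with $\vartheta_0$ the \emph{candidate} extremal index. The paper then simply applies this at block size $d_n\sim Cn^\epsilon$ and compares with Theorem~\ref{thm:2'}, which gives the constant $\vartheta_\epsilon$; since $\vartheta_0=\mathsf q_{F,2}>(1-2\epsilon\beta)\mathsf q_{F,2}=\vartheta_\epsilon$ for any $\epsilon>0$, this is an immediate contradiction. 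Your line ``hence force $\vartheta_{\epsilon/2}=\theta=\vartheta_1$'' is wrong and would not yield a contradiction in the direction you state (you would get $\vartheta_{\epsilon/2}>\vartheta_1$, not equality). Your fallback ``rate-independence plus compare two polynomial rates'' does work and avoids identifying the constant, but it is unnecessarily indirect once you know the constant is $\vartheta_0$.
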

\begin{proof}
Suppose  $\mathcal{AC}(d_n,b_n)$ holds with $d_n\sim C n^{\epsilon}$ some $\epsilon\in (0,1)$.  {By \cite[Corollary 6.2.6]{kulik20heavy}}, the anticlustering condition gives
\[
\frac{\proba(\max_{k=1,\ldots,d_n}X_k>b_n x)}{d_n\proba(X_0>b_n)}\sim \vartheta_0 x^{-\alpha}. 
\]
At the same time, \eqref{eq:X_0 tail} gives
\[
d_n\proba(X_0>b_n)\sim \frac{d_n}n \sim k_n^{-1}.
\] 
Thus,
\equh\label{eq:classic}
\proba\left(\frac1{b_n}\max_{k=1,\ldots,d_n}X_k> x\right)\sim \vartheta_0 \frac{x^{-\alpha}}{k_n}, 
\eque
contradicting \eqref{eq:block EVT} since $\vartheta_0>\vartheta_{\epsilon}$. Thus, the series does not satisfy the anticlustering condition with $d_n\sim C n^{\epsilon}$ and hence with any block size larger than $n^{\epsilon}$.
\end{proof}

Second, notice that while the rate of the block size $d_n$ has an impact at the mesoscopic level, {it has no effect on the macroscopic limit.}
{So the index $\rho$ only concerns the mesoscopic level.}
 This might come as a confusing remark at the first sight. In words, this phenomenon is due to the co-existence of two features of the model: the power-law decay of the tails and the  aggregation nature, and can be viewed as another phase transition {\em at mesoscopic level within the regime of weak dependence} that is not present when the anticlustering condition $\calA\calC(d_n,b_n)$ holds with $d_n = o(n)$ (in which case, \eqref{eq:classic} holds instead of \eqref{eq:block EVT}). 
  This is better clarified by 
 comparing the proofs of Theorems \ref{thm:1} and \ref{thm:2'} in Section \ref{sec:overview 2}; {see Remark \ref{rem:why}}. It would be very interesting to know whether such a phase transition exists for any other regularly varying stochastic processes. It would also be interesting to know whether there are models such that the anticlustering condition holds with {$\limn\log d_n/\log n = \rho$} for $\rho$ up to certain $\rho_0<1$ only. 

Third, Theorem \ref{thm:1} implies the commonly studied point-process convergence stated in Corollary \ref{coro:1'}, which implies certain functional central limit theorems and extremal limit theorems immediately. 

\begin{Coro}\label{coro:1'}
Following the notation in Theorem \ref{thm:1}, the following weak convergence of  point processes   on $ (0,\infty) \times [0,1]$ holds as $n\rightarrow\infty$:
\[
 \summ j1{n} \ddelta{  X_{j}  /b_n, \ j/n    } \weakto   \sif\ell1  \ddelta{  G_{\beta,\ell} (\Gamma_\ell/\theta)^{-1/\alpha},\ U_\ell     },
\]
where $\{G_{\beta,\ell}\}_{\ell\in\N}$ are i.i.d.~geometric random variables with parameter $\mathsf q_{F,2}$. 
\end{Coro}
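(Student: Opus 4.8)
The plan is to derive Corollary \ref{coro:1'} from Theorem \ref{thm:1} by means of the standard continuous-mapping/projection argument that passes from the point process of \emph{blocks} to the point process of \emph{individual observations}, while identifying the limiting marks with i.i.d.\ geometric random variables. First I would record what the limit object in \eqref{eq:PPC} encodes: each atom of $\xi$ is a pair $(\vv Q_\ell (\Gamma_\ell/\theta)^{-1/\alpha}, U_\ell)$, where $\vv Q_\ell = \vv\Theta_\ell = (\ind_{\{k\in \vv\tau^{(1)}_\ell\cap \vv\tau^{(2)}_\ell\}})_{k\in\N_0}$ by \eqref{eq:Q = Theta} and \eqref{eq:Theta_k}. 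Thus each $\vv Q_\ell$, as an element of $S$, is a $0$--$1$ sequence whose support has cardinality $|\vv\tau^{(1)}_\ell\cap\vv\tau^{(2)}_\ell|$, which by the discussion after \eqref{eq:qFp} is geometric with parameter $\mathfrak q_{F,2}$; call this number $G_{\beta,\ell}$. Hence the $\ell$-th cluster, when ``unfolded'' into the real line, contributes exactly $G_{\beta,\ell}$ points all located at the same height $(\Gamma_\ell/\theta)^{-1/\alpha}$ (up to the scaling by $b_n$) and at time essentially $U_\ell$.

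Next I would make the unfolding precise. Define the map $T$ sending a point measure $\sum_j \ddelta{\vv x_j, t_j}$ on $S\times[0,1]$ to the point measure on $(0,\infty)\times[0,1]$ that replaces each atom $(\vv x_j, t_j)$ by $\sum_{k} \ddelta{(x_j)_k, t_j}$, i.e.\ it reads off the nonzero coordinates of each cluster and places a point at each corresponding value with the common time coordinate $t_j$. One checks that $T$ is well defined and continuous on the relevant subset of configurations (those with finitely many atoms above each level $\epsilon>0$ and no accumulation of mass), which is exactly the set where $\xi$ lives a.s.; this is the content of the standard argument referenced as the proof of \cite[Corollary 6.2.6]{kulik20heavy}. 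Applying $T$ to the left side of \eqref{eq:PPC}, the block $\vv X_{d_n,j}/b_n$ gets unfolded into $\sum_{k=(j-1)d_n+1}^{jd_n}\ddelta{X_k/b_n, j/k_n}$, and summing over $j=1,\dots,k_n$ recovers $\sum_{j=1}^{k_n d_n}\ddelta{X_j/b_n, j/n}$ up to replacing the time coordinate $j/k_n$ attached to the whole block by the finer $j/n$ within the block; since $d_n/n\to 0$, these two time labellings differ by $o(1)$ uniformly, so the difference is negligible in the vague topology. The leftover indices $j\in\{k_nd_n+1,\dots,n\}$ number fewer than $d_n = o(n)$ consecutive terms, and a routine estimate (using stationarity and $\proba(X_0>b_n\epsilon)\sim \epsilon^{-\alpha}/n$ together with a union bound) shows they contribute no atom above any fixed level $\epsilon$ with probability tending to one; hence they are asymptotically negligible as well. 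Applying $T$ to the right side of \eqref{eq:PPC} replaces each atom $(\vv Q_\ell(\Gamma_\ell/\theta)^{-1/\alpha}, U_\ell)$ by $G_{\beta,\ell}$ copies of $((\Gamma_\ell/\theta)^{-1/\alpha}, U_\ell)$, which is precisely the claimed limit $\sum_{\ell}\ddelta{G_{\beta,\ell}(\Gamma_\ell/\theta)^{-1/\alpha}, U_\ell}$ read as a point process with multiplicities. Combining these via the continuous mapping theorem and Slutsky-type arguments for the negligible pieces yields the corollary.

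The main obstacle I expect is the careful verification that $T$ is continuous on a set of full measure for $\xi$ and that the two point processes being compared — the unfolded blocks versus $\sum_{j=1}^n \ddelta{X_j/b_n, j/n}$ — are asymptotically equal in distribution rather than merely close in some weaker sense. Concretely, one must control two things simultaneously: that within a single block the indices $(j-1)d_n+1,\dots,jd_n$ all map to times within $o(1)$ of $j/k_n$ (straightforward since a block has length $d_n = o(n)$, so $jd_n/n - j/k_n \to 0$), and that no cluster is ``split'' across a block boundary in the limit. The latter is where the block size $d_n\to\infty$ is essential: clusters have finite (geometric) length, so the probability that a cluster straddles one of the $k_n$ boundaries is $O(k_n \cdot \text{(const)} / n) = O(1/d_n)\to 0$. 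This boundary-effect estimate, which also underlies the passage from Theorem \ref{thm:1} to Theorem \ref{thm:2} and is implicit in \cite[Corollary 6.2.6]{kulik20heavy}, is the technical heart; everything else is bookkeeping with the continuous mapping theorem and the explicit geometric law of $|\vv\tau^{(1)}\cap\vv\tau^{(2)}|$ recorded after \eqref{eq:qFp}.
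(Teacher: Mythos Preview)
Your overall strategy is exactly the one the paper has in mind (it gives no proof beyond the sentence ``Theorem~\ref{thm:1} implies the commonly studied point-process convergence stated in Corollary~\ref{coro:1'}''): unfold the block-level convergence of Theorem~\ref{thm:1} into individual observations via a continuous map, control the time relabelling $j/k_n\mapsto j/n$ and the edge effects, and identify the cluster sizes with the geometric law of $|\vv\tau\topp1\cap\vv\tau\topp2|$. All of this is fine.

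There is, however, a genuine error in your final identification. Unfolding the atom $(\vv Q_\ell(\Gamma_\ell/\theta)^{-1/\alpha},U_\ell)$ yields $G_{\beta,\ell}$ copies of the point $((\Gamma_\ell/\theta)^{-1/\alpha},U_\ell)$, i.e.\ the measure $G_{\beta,\ell}\,\delta_{((\Gamma_\ell/\theta)^{-1/\alpha},U_\ell)}$. This is \emph{not} the same as the single atom $\delta_{(G_{\beta,\ell}(\Gamma_\ell/\theta)^{-1/\alpha},U_\ell)}$; the two point processes differ in law. For instance, their void probabilities on $(x,\infty)\times[0,1]$ are $\exp(-\theta x^{-\alpha})$ and $\exp(-\theta\,\esp[G_{\beta}^{\alpha}]\,x^{-\alpha})$ respectively, and only the former matches the extremal index $\theta$ of the model established in \eqref{eq:macro} and Theorem~\ref{thm:1}. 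Your phrase ``read as a point process with multiplicities'' does not bridge this gap: a multiplicity $G$ at height $h$ and a single point at height $Gh$ are different objects. The limit your unfolding actually produces is
\[
\sif\ell1 G_{\beta,\ell}\,\ddelta{(\Gamma_\ell/\theta)^{-1/\alpha},\,U_\ell},
\]
with $G_{\beta,\ell}$ as a multiplicity outside the Dirac mass; the displayed formula in the corollary appears to be a misprint (note that both versions yield the same partial-sum limit in Corollary~\ref{Coro:CLT}, which is the only subsequent use, so the discrepancy is harmless for the paper's applications). You should state this clearly rather than conflate the two.
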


Combining Corollary \ref{coro:1'} above, \cite[Theorem 8.3.1]{kulik20heavy} (point-process convergence implies finite-dimensional convergence of partial sums when $\alpha\in (0,1)$) and \cite[Corollary C.3.8]{kulik20heavy} (convergence of finite-dimensional distributions of nondecreasing processes yields functional convergence in $M_1$-topology), we obtain the following functional central limit theorem.
\begin{Coro}\label{Coro:CLT}
As $n\rightarrow\infty$,  
\[
\pp{\frac{1}{b_n}\summ k1{\floor{nt}} X_k }_{t\in [0,1]}\weakto  \pp{\theta^{1/\alpha} \sif\ell1 G_{\beta,\ell} \Gamma_{\ell}^{-1/\alpha}\inddd{U_\ell\le t} }_{t\in [0,1]},
\]
where $\weakto$ denotes the weak convergence in $\mathbb{D}([0,1])$ space equipped with $M_1$-topology. 
The limiting process on the right-hand side is an $\alpha$-stable subordinator. 
\end{Coro}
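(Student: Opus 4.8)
The plan is to extract the functional limit theorem from the point-process convergence of Corollary \ref{coro:1'} in three stages: first pass from the limiting point process to the partial-sum process at the level of finite-dimensional distributions; then upgrade finite-dimensional convergence to functional convergence in $M_1$ using monotonicity; and finally identify the limiting process. For the first stage I would view the scaled partial-sum process as the image of the point measure $\summ j1n\ddelta{X_j/b_n,\,j/n}$ under the summation functional $\sum_i\ddelta{x_i,t_i}\mapsto\pp{\sum_{i:\,t_i\le t}x_i}_{t\in[0,1]}$. When $\alpha\in(0,1)$ this functional is a.s.\ continuous at the limiting point measure in Corollary \ref{coro:1'}, because its total mass $\theta^{1/\alpha}\sif\ell1 G_{\beta,\ell}\Gamma_\ell^{-1/\alpha}$ is a.s.\ finite: $\Gamma_\ell\sim\ell$ gives $\sif\ell1\Gamma_\ell^{-1/\alpha}<\infty$ since $1/\alpha>1$, while the i.i.d.\ geometric marks $G_{\beta,\ell}$ have finite mean, so a conditioning argument applies — and, crucially, no centering of small jumps is then needed. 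This is exactly the content of \cite[Theorem 8.3.1]{kulik20heavy}, which also subsumes the routine-but-nontrivial point that the contribution of the summands $X_k\le\epsilon b_n$ to $b_n^{-1}\summ k1nX_k$ is asymptotically negligible as $\epsilon\downarrow0$. Thus
\[
\pp{\frac1{b_n}\summ k1{\floor{nt}}X_k}_{t\in[0,1]}\ConvFDD\pp{\theta^{1/\alpha}\sif\ell1 G_{\beta,\ell}\Gamma_\ell^{-1/\alpha}\inddd{U_\ell\le t}}_{t\in[0,1]}.
\]

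For the second stage, since the $X_k$ are nonnegative the maps $t\mapsto b_n^{-1}\summ k1{\floor{nt}}X_k$ are nondecreasing, and so is the limit; moreover the limit has no fixed discontinuities, since $\proba(U_\ell=t)=0$ for each fixed $t$. Finite-dimensional convergence of nondecreasing processes to such a limit upgrades automatically to weak convergence in $\D([0,1])$ equipped with the $M_1$ topology, by \cite[Corollary C.3.8]{kulik20heavy}. This is precisely the point at which one must work with $M_1$ rather than $J_1$: within a single extremal cluster the partial sums increase through several comparable summands, a behaviour that $M_1$ — but not $J_1$ — identifies with a single jump.

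For the third stage, I would recognise $Y_t:=\theta^{1/\alpha}\sif\ell1 G_{\beta,\ell}\Gamma_\ell^{-1/\alpha}\inddd{U_\ell\le t}$ as an $\alpha$-stable subordinator. The pairs $\{(U_\ell,\Gamma_\ell)\}_{\ell\in\N}$ form a Poisson point process on $[0,1]\times(0,\infty)$ with Lebesgue intensity, so the jump times are uniformly spread over $[0,1]$ and $Y$ has independent and stationary increments. Furthermore $\{\Gamma_\ell^{-1/\alpha}\}_\ell$ is a Poisson point process on $(0,\infty)$ with intensity $\alpha x^{-\alpha-1}dx$, and multiplying by $\theta^{1/\alpha}$ and by the independent i.i.d.\ marks $G_{\beta,\ell}$ turns this into a Poisson point process with intensity $\theta\,\esp[G_{\beta,1}^\alpha]\,\alpha x^{-\alpha-1}dx$. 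Hence $Y$ is a pure-jump Lévy process with no drift and no Gaussian component whose Lévy measure is $\alpha$-stable (the jump series converging absolutely because $\alpha<1$), i.e.\ an $\alpha$-stable subordinator.

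Essentially everything genuinely difficult here has already been done in Theorem \ref{thm:1} and Corollary \ref{coro:1'}; the remaining obstacle is the careful verification of the hypotheses of \cite[Theorem 8.3.1]{kulik20heavy}: that the point-process convergence takes place in the correct state space (the vague topology on $(0,\infty)\times[0,1]$), that $\alpha\in(0,1)$ so that the limiting point configuration a.s.\ has finite total mass and the summation functional is a.s.\ continuous with no truncation or centering, and that the moderate-values contribution is asymptotically negligible. With these in hand, the second and third stages are routine.
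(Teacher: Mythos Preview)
Your proposal is correct and follows essentially the same route as the paper: invoke Corollary~\ref{coro:1'}, apply \cite[Theorem 8.3.1]{kulik20heavy} to pass from point-process convergence to finite-dimensional convergence of partial sums (using $\alpha\in(0,1)$), and then upgrade to $M_1$-convergence via \cite[Corollary C.3.8]{kulik20heavy} using the monotonicity of the paths. Your additional discussion of why the summation functional is continuous, why $M_1$ rather than $J_1$ is needed, and the explicit identification of the limit as an $\alpha$-stable subordinator are welcome elaborations that the paper leaves implicit.
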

 Corollary  \ref{Coro:CLT} above can be viewed as a complementary result of \citep{bai20functional}, where for the same model (strictly speaking, the models are also different as explained in Remark \ref{rem:nonneg}) but with $\beta \in (1/2,1)$ a functional central limit theorem is established. With $\beta\in(1/2,1)$, this is called the {\em supercritical regime}, and in this regime the limit is not an $\alpha$-stable process.

One can also establish the convergence of random sup-measures \citep{vervaat88random}, and recover the results in \citep{bai24phase}. The convergence of random sup-measures implies in particular \eqref{eq:macro}. 
The fact that Theorem \ref{thm:1} implies the convergence of random sup-measures is straightforward, 
and we omit the details. {A few additional consequences of the main results that are of more technical nature are collected in Section \ref{sec:other}}. 

{\em The rest of the paper is organized as follows.} In Section \ref{sec:PP} we prove Theorem \ref{thm:1}. In Section \ref{sec:single} we state and prove the full version of Theorem \ref{thm:2'} in Theorem \ref{thm:2}. We also comment on the relations of the two proofs in Section \ref{sec:overview 2}, and in particular the discussion therein explains which part of the model contributes to the mesoscopic limit but not the macroscopic limit.

We will use the following notation throughout this paper. We use $C$ to denote a generic positive constant  whose value may change from line to line, we use both $a_n=o(b_n)$ and $a_n\ll b_n$ to denote $\lim_{n\rightarrow\infty} a_n/b_n=0$ for two positive sequences $\{a_n\}_{n\in\N}$ and $\{b_n\}_{n\in\N}$, $n\in \N:=\{1,2,\ldots\}$,

\subsection*{Ackowledgements}
 Y.W.'s research was partially supported by
Army Research Office, USA (W911NF-20-1-0139),  Simons Foundation (MP-TSM-00002359), 
and the Taft Center Fellowship (2024--2025) from Taft Research Center at University of Cincinnati.


\section{Point-process convergence of clusters of extremes}\label{sec:PP}

 In this section, we prove Theorem \ref{thm:1}. The proof is based on a triangular-array representation of the model, which is provided in Section \ref{sec:renewal}. The same representation will be needed in Section \ref{sec:single} as well. 

\subsection{A triangular-array representation}\label{sec:renewal}
We introduce a different representation of $ \pp{X_k}_{k\in\N}$ in \eqref{eq:series infty p} that shall be needed for our proofs, for finite-dimensional distributions of the process.
We are interested in the joint law of $(X_1,\dots,X_n)$, for some $n\in \N$. Then, for all those $i\in\N$ such that
$\vv\tau\topp{i,d_i}\cap\{1,\dots,n\} = \emptyset$, they do not contribute to the series representation. (Recall also that $d_i$ is a strictly positive integer throughout.)
Therefore, by a standard thinning argument of Poisson point processes, it follows that
\equh\label{eq:thinning}
\sif i1\ddelta{  \Gamma_i^{-1/\alpha},\ \vv\tau\topp{i,d_i}\cap\{1,\dots,n\}}\inddd{\vv\tau\topp{i,d_i}\cap\{1,\dots,n\}\ne\emptyset}\eqd\sif i1\ddelta{w_n^{1/\alpha} \Gamma_i^{-1/\alpha},\ R_{n,i}},
\eque
where on the right-hand side, 
\[
w_n = \summ k0{n-1} \pi(k) =  \summ k0{n-1} \wb F(k) \sim \frac{\mathsf C_F}{1-\beta}\cdot n^{1-\beta},
\]
as $n\to\infty$,
the random variables $\{\Gamma_i\}_{i\in\N}$ are consecutive arrival times of a standard Poisson process,  $\{R_{n,i}\}_{i\in\N}$ are i.i.d.~random closed subsets of $\{1,\dots,n\}$ with the law $R_{n,i}\eqd R_n$ described below, and all families are independent. 

Suppose $\vv\tau^*$ is a delayed renewal process with stationary delay $\pi$ and renewal distribution $F$. This is not a classical random element, as the total is infinite (since the stationary delays is so). Nevertheless, it induces an infinite measure on the space $\calF_0(\N)$ of nonempty closed subsets of $\N$ with Fell topology \citep{molchanov17theory}, and we let $\wt\mu^*$ denote this infinite measure. Then, set $\wt\mu_n$ as the probability on $\calF_0(\N)$ determined by
\[
\frac{d\wt\mu_n}{d\wt\mu^*}(F) = \frac{\inddd{F\cap \{1,\dots,n\}\ne\emptyset}}{\wt\mu^*(\{F'\in\calF_0(\N): F'\cap\{1,\dots,n\}\ne\emptyset\})} = \frac{\inddd{F\cap\{1,\dots,n\}}}{w_n}, F\in\calF_0(\N).
\]
(We follow the convention to use the letter $F$ to denote a generic closed set. This is in conflict with the fact that we also use $F$ to denote the cumulative distribution function for the renewal. However, the notation $F$ standing for a closed set will not be used for the rest of the paper.)
{We let $R_n$ denote a random element in $\calF_{0,n}(\N):=\{F\in\calF_0(\N):F\cap\{1,\dots,n\}\ne\emptyset\}$ with law $\wt\mu_n$.} 
\color{black}
Moreover, it is immediately verified that the following lemma holds true.
\begin{Lem}\label{lem:Rm}
Let $R_n$ be as above. Then we have the following. 
 \begin{enumerate}[(i)]
  \item (Shift invariance) For all $k=2,\dots,n$, {$R_n\cap\{k,\dots\}-(k-1)\eqd R_n$}. In particular, $\proba(k\in R_n) = 1/w_n, k=1,\dots,n$. 
  \item (Markov/renewal property) $$\proba(\min(R_n\cap\{k+1,k+2,\dots\}) \le k+ j\mid k\in R_n) = F(j)$$ for all $j,k,n\in\N, n\ge j+k$.
 \end{enumerate}
(In the above, $A-k = \{\ell-k:\ell\in A\}$ and $\min (R_n\cap\{k+1,k+2,\dots\})$ is the smallest integer in the set of interest.)
\end{Lem}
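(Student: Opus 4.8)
\textbf{Proof proposal for Lemma \ref{lem:Rm}.}

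The plan is to read off both properties directly from the description of the law $\wt\mu_n$ of $R_n$ as the normalized restriction of the infinite measure $\wt\mu^*$ induced by a stationary-delay renewal process $\vv\tau^*$. The key structural input is that $\wt\mu^*$ itself is shift-invariant (as a measure on closed subsets of $\N$, ignoring the normalization), because the stationary delay measure $\pi$ is exactly the one that makes the delayed renewal process $\vv\tau^*$ stationary under the shift — indeed $\pi(k) = \wb F(k)$ is the standard stationary renewal delay. Formally I would work with the characterization of $\wt\mu^*$ via its one-dimensional ``intensity'' $\wt\mu^*(\{F : k\in F\}) = 1$ for every $k\in\N$, together with the renewal (Markov) structure of the increments: conditionally on $k\in F$, the part of $F$ lying in $\{k+1,k+2,\dots\}$ is distributed (under the appropriate Palm-type conditioning) like $k$ plus an ordinary renewal process with inter-renewal law $F$. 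Both of these are immediate from the construction of $\vv\tau^*$ as a stationary-delay renewal process.

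For part (i), shift invariance, I would argue as follows. Fix $k\in\{2,\dots,n\}$ and a test set, say measure the event $\{R_n\cap\{k,\dots\} - (k-1) \in \calA\}$ for a Borel $\calA\subset\calF_0(\N)$. Using $d\wt\mu_n/d\wt\mu^* = \ind_{\{F\cap\{1,\dots,n\}\ne\emptyset\}}/w_n$ and the shift-invariance of $\wt\mu^*$ under the map $F\mapsto (F\cap\{k,\dots\})-(k-1)$ — here one must be a little careful: $\wt\mu^*$ is shift-invariant as a measure on subsets of $\Z$, so one should think of $\vv\tau^*$ as a two-sided stationary-delay renewal process and then restrict; alternatively invoke the two-sided representation in \cite[Section 2.3]{bai23tail} — the pushforward of $\wt\mu^*$ equals $\wt\mu^*$ again. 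The indicator $\ind_{\{F\cap\{1,\dots,n\}\ne\emptyset\}}$ is the only thing that is not exactly shift-invariant, but on the event $\{k\in F\}$ (or more precisely on the relevant part of the support after conditioning on the behavior to the left of $k$) one checks the normalization constants match, giving $R_n\cap\{k,\dots\}-(k-1)\eqd R_n$. The consequence $\proba(k\in R_n) = 1/w_n$ then follows by taking $\calA = \{F:1\in F\}$ and using $\proba(1\in R_n) = \wt\mu^*(\{F:1\in F\})/w_n = 1/w_n$ from the normalization of $\wt\mu_n$.

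For part (ii), the Markov/renewal property, I would condition on $\{k\in R_n\}$ and use that, under $\wt\mu^*$, conditionally on $k$ being a renewal epoch, the position of the next renewal after $k$ has law governed by $F$ (this is the defining renewal property of $\vv\tau^*$, unaffected by the stationary delay which only randomizes the phase). The only subtlety is the reweighting by $\ind_{\{F\cap\{1,\dots,n\}\ne\emptyset\}}$: since we have already conditioned on $k\in R_n$ with $k\le n$, the event $\{F\cap\{1,\dots,n\}\ne\emptyset\}$ is automatically satisfied, so the Radon–Nikodym reweighting becomes a constant on this conditional event and the law of $\min(R_n\cap\{k+1,\dots\}) - k$ is exactly that of an inter-renewal time, i.e. $\proba(\min(R_n\cap\{k+1,\dots\})\le k+j \mid k\in R_n) = F(j)$, valid whenever $n\ge j+k$ so that the conditioning and the event both sit inside $\{1,\dots,n\}$.

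The main obstacle, and the one point deserving genuine care, is the rigorous handling of $\wt\mu^*$ as an \emph{infinite} measure on $\calF_0(\N)$: one cannot speak of ``the law of $\vv\tau^*$'' as a probability measure, so statements like ``shift-invariance of $\wt\mu^*$'' and ``conditionally on $k\in F$'' must be phrased either through the $\sigma$-finite disintegration of $\wt\mu^*$ with respect to the point-count functional, or by passing to the two-sided stationary renewal process where the Palm calculus is standard. Once that bookkeeping is set up, both (i) and (ii) are short; the lemma is indeed ``immediately verified'' as the text claims, and I would keep the written proof to a few lines by citing the stationarity and renewal properties of $\vv\tau^*$ directly.
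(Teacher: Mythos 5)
Your approach is the right one in spirit — work with the shift invariance of the unnormalized stationary‑renewal measure $\wt\mu^*$ and analyze how the reweighting indicator $\ind_{\{F\cap\{1,\dots,n\}\ne\emptyset\}}$ interacts with the shift — and your treatment of part~(ii) is correct: once you condition on $\{k\in R_n\}$ with $k\le n$, the event $\{F\cap\{1,\dots,n\}\ne\emptyset\}$ is automatic, so $\wt\mu_n(\cdot\mid k\in F)=\wt\mu^*(\cdot\mid k\in F)$ and the renewal property of $\vv\tau^*$ gives the $F$‑distributed spacing directly.

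For part~(i), however, the point you flag and then wave past (``one checks the normalization constants match'') is exactly where the argument breaks, and the literal claim $R_n\cap\{k,\dots\}-(k-1)\eqd R_n$ is in fact \emph{false} as a distributional equality of random elements of $\calF_0(\N)$. Indeed $R_n$ lies in $\calF_{0,n}(\N)$ almost surely, whereas $R_n\cap\{k,\dots\}-(k-1)$ fails to meet $\{1,\dots,n\}$ on the event $R_n\cap\{k,\dots,k+n-1\}=\emptyset$, which has positive probability for nondegenerate $F$. Equivalently, the change of variables you perform turns $\ind_{\{F\cap\{1,\dots,n\}\ne\emptyset\}}$ into $\ind_{\{F\cap\{k,\dots,k+n-1\}\ne\emptyset\}}$, and these differ on a set of positive $\wt\mu^*$‑measure. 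This is already visible in the paper's own remark that $\proba(k\in R_n)<1/w_n$ for $k>n$: full shift invariance would force $\proba(n+k-1\in R_n)=\proba(n\in R_n)=1/w_n$ for $k\ge 2$, a contradiction. What is true, and what is actually used everywhere in the paper (e.g.\ for $\vv Q^{(i_1,i_2)}_{n,d_n,j}$ and in Lemma~\ref{Lem:conv Q}), is stationarity restricted to the window $\{1,\dots,n\}$: for all $k=2,\dots,n$,
\[
R_n\cap\{k,\dots,n\}-(k-1)\ \eqd\ R_n\cap\{1,\dots,n-k+1\},
\]
equivalently the finite-dimensional distributions of $\pp{\ind_{\{\ell\in R_n\}}}_{1\le \ell\le n}$ are shift invariant. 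This version \emph{does} follow cleanly from your ingredients, with no hand-waving about ``matching normalization'': for any cylinder event determined by $\ind_{\{\ell_1\in F\}},\dots,\ind_{\{\ell_m\in F\}}$ with $\ell_1<\dots<\ell_m\le n$ and $\ell_1\in F$ required, the condition $\{\ell_1\in F\}$ forces $\{F\cap\{1,\dots,n\}\ne\emptyset\}$, so
\[
\proba\pp{\ell_1,\dots,\ell_m\in R_n}=\frac{1}{w_n}\,\wt\mu^*\pp{\ell_1,\dots,\ell_m\in F},
\]
and the right-hand side is shift invariant (indeed equals $u(\ell_2-\ell_1)\cdots u(\ell_m-\ell_{m-1})/w_n$); events with excluded indices in $\{1,\dots,n\}$ are handled by inclusion–exclusion. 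The consequence $\proba(k\in R_n)=1/w_n$ for $k\le n$ is then immediate. I would therefore keep your overall plan but state the shift invariance only on the window $\{1,\dots,n\}$ and prove it via this explicit cylinder computation, rather than asserting the full set-valued distributional equality, which does not hold.
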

For $k>n$, $\proba(k\in R_n)<1/w_n$ but this is not relevant for the rest of the paper.
By \eqref{eq:thinning}, we work with the following equivalent representation of \eqref{eq:series infty p}: 
\equh\label{eq:p>=1}
\pp{X_k}_{k=1,\dots,n}\eqd \pp{X_{n,k}}_{k=1,\dots,n} :=\pp{w_n^{2/\alpha}\sum_{0<i_1<  i_2}\frac{1}{ \Gamma_{i_1}^{1/\alpha}\Gamma_{i_2}^{1/\alpha}}\inddd{k\in R_{n,i_1,i_2} }}_{k=1,\dots,n},
\eque
where on the right-hand side the notation  is as in \eqref{eq:thinning}, and $
R_{n,i_1,i_2}  =  R_{n,i_1}\cap R_{n,i_2}$.

\subsection{Overview of the proof}
We prove Theorem \ref{thm:1}.
Recall that the goal of Theorem \ref{thm:1} is to show, under the assumption $d_n\to\infty, d_n = o(n)$,
\[
\xi_n  =\summ j1{k_n} \ddelta{\vv  X_{d_n,j}  /b_n, \ j/k_n    }\inddd{R_{n,i_1,i_2}\cap \cal{I}_{d_n,j}\neq \emptyset  }
\weakto \xi=\sif\ell1 \ddelta{  \vv{Q}_\ell (\Gamma_\ell/\theta)^{-1/\alpha},\ U_\ell     }.
\]
Recall
\[
\vvX_{d_n,j}:=\pp{X_{n,(j-1)d_n+1},\dots,X_{n,jd_n}},
\]
which is also viewed as a random element in $S$. 

We shall introduce {a few} intermediate processes as approximations. 
Let $m_n$ be an increasing sequence of positive integers. 
Eventually for Theorem \ref{thm:1} to hold we shall assume  $m_n\to\infty$ such that
\begin{equation}\label{eq:m_n ppc}
\frac{w_{n}^2}{n}\frac1{\log^{1/(1-\alpha)}n}\ll m_n\ll \frac{ w_n^2}{n}\log n.
\end{equation}
However, in a few steps below, we do not impose the above restrictions, but possibly a weaker constraint on the growth rate of $m_n$.
Set
 \equh {X}_{n,k}^*:= 
\pp{\frac{w_{n}}{m_n}}^{2/\alpha}
 \sum_{1\le i_1<i_2\le m_n}  V^*_{i_1,i_2}  \inddd{k\in R_{n,i_1,i_2}}  \label{eq:X star whole}, \ k=1,\ldots,n,
\eque
with 
\[
V_{i_1,i_2}^* \equiv V_{m_n,i_1,i_2}^*:=\pp{\frac{\Gamma_{m_n+1}^2}{\Gamma_{i_1}\Gamma_{i_2}}}^{1/\alpha}, \ i_1,i_2\in \N,\  i_1<i_2.
\]
Write
\[
{\vv X}_{d_n,j}^* :=  \pp{X_{n,(j-1)d_n+1}^*,\dots,X_{n,jd_n}^*},   \quad j=1,\ldots,k_n.
\]
Set also
\begin{equation}\label{eq:Q emp}
\vv Q_{n,d_n,j}^{(i_1,i_2)}:=\pp{\inddd{(j-1)d_n+k\in R_{n,i_1,i_2}}}_{k=1,\dots,d_n}, j=1,\dots,k_n.
\end{equation}
These and $\vv X_{d_n,j}$ are all defined as $d_n$-dimensional random vectors, and we use the same notation to represent the corresponding elements in $\wt\ell_0$.
Introduce the following sequence:
\[
  c_n = \pp{\frac{ b_n^{\alpha} m_n^2}{w_{n}^2}}^{1/\alpha}.
\]
Note that the rate of $c_n$ depends on the choice of $m_n$.

Now, define   two intermediate approximation point processes as follows:
\begin{align*}
\xi_n^*&:=\summ j1{k_n} \ddelta{\vv X_{d_n,j}^*/b_n, \ j/k_n    } \inddd{R_{n,i_1,i_2}\cap \cal{I}_{d_n,j}\neq \emptyset  },\\
\xi_n^\circ & :=\summ j1{k_n} \sum_{1\le i_1<i_2 \le m_n}
 \delta_{ \left( c_n^{-1} V_{i_1,i_2}^* \vv Q_{n,d_n,j}^{(i_1,i_2)}   , \ j/k_n   \right) } \inddd{R_{n,i_1,i_2}\cap \cal{I}_{d_n,j}\neq \emptyset  }.
\end{align*}

{\bf The proof proceeds as follows.}  Note that it suffices to show (see \citep[Theorem 7.1.17]{kulik20heavy}) that
\[
\xi_n(f)\weakto \xi(f)
\]
for all $f\in\Clipb(S\times[0,1])$, the class of bounded Lipschitz functions on $S\times [0,1]$ (see Section \ref{sec:topo} on topological background on $S$). We achieve so in the following three steps.
\begin{enumerate}[(i)]
\item (De-aggregation)
We first show in Proposition \ref{prop:5.2} that
\[\limn\esp \left|  \xi_n^*(f)- \xi_n^\circ(f) \right|=0,
\]
  for all  $f\in\Clipb(S\times[0,1])$, under a weaker condition: for some $\delta>0$,
  \equh\label{eq:m_n ppc 2}
\frac{w_{n}^2}{n}\frac1{\log^{1/(1-\alpha)}n}\ll m_n\ll\pp{  \frac{w_n}{d_n^\beta}\log n\wedge \frac{w_n}{n^\delta}}.
\eque
Actually we do not need the upper constraint in view of \eqref{eq:m_n ppc}. We note it here because the upper bound is involved in our analysis of the mesoscopic limit in Section \ref{sec:single}.
  
\item (Poisson approximation) Next we show in Proposition \ref{prop:5.3} that under \eqref{eq:m_n ppc 2} and $m_n\ll w_n^2\log n/n$ (that is, under \eqref{eq:m_n ppc}), we have as $n\rightarrow\infty$
\[
\xi_{n}^\circ\Rightarrow  \xi.
\]
We mention that the upper constraint $m_n\ll w_n^2\log n/n$ is needed only for the second-moment control for the Poisson approximation in \eqref{eq:bn2 bound} below. 
 
\item (Truncation approximation)
In Proposition \ref{prop:5.4}, we show that under \eqref{eq:m_n ppc},
\[
\limn\proba\pp{\abs{\xi_n(f) - \xi^*_n(f)}>\eta} = 0, \mfa \eta>0.
\]
\color{magenta}
\end{enumerate}

\subsection{De-aggregation}\label{sec:de-agg}
 Set
\[
J_n(k):= \{(i_1,i_2)\in \{1,\ldots,m_n\}^2:\  i_1<i_2, \  k\in R_{n,i_1,i_2} \}, \quad k=1,\ldots,n.
\]
As long as $m_n \ll w_nn^{-\delta}$ for some $\delta>0$, we may fix $p^*$ satisfying
\equh\label{eq:p*2}
p^* = \binom{\what p}2,\quad \text{such that } \limn \frac{nm_n^{\what p+1}}{w_n^{\what p+1}}=0.
\eque
Define 
\[
\Omega_{n,1}:=\{ |J_n(k)|\le  p^* \text{ for all }k =1,\ldots,n\}.
\]
Write $\calW_{m_n} :=\{(i_1,i_2): 1\le i_1<i_2\le m_n\}$. Then for $\epsilon>0$, introduce
\[
E^*_{n,i_1,i_2,j}(\epsilon):=\ccbb{ \frac{V_{i_1,i_2}^*}{c_n}>\epsilon,\  R_{n,i_1,i_2}\cap \cal{I}_{d_n,j}\neq \emptyset  },
\]
and 
\[
\Omega_{n,2}(\epsilon):= \ccbb{N_n(\epsilon)\le 1}\]
with
\begin{align}\label{eq:Nn}
 N_n(\epsilon):=\sum_{j=1}^{k_n}  \sum_{\substack{(i_1,i_2),(i_1',i_2')\in\calW_{m_n}\\ (i_1,i_2)\neq (i_1',i_2')}} \ind_{E^*_{n,i_1,i_2,j}(\epsilon)}\ind_{E^*_{n,i_1',i_2',j}(\epsilon)}.
\end{align}
\begin{Lem}\label{lem:5.1'}
Under the assumptions $d_n = o(n)$ and \eqref{eq:m_n ppc 2},
we have
\begin{equation}\label{eq:Omega_n(epsilon)}
\limn \proba(\Omega_{n,1}^c) = \limn\proba(\Omega_{n,2}(\epsilon)^c)=0.
\end{equation}
\end{Lem}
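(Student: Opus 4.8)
\textbf{Proof proposal for Lemma \ref{lem:5.1'}.}

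The plan is to bound the two probabilities $\proba(\Omega_{n,1}^c)$ and $\proba(\Omega_{n,2}(\epsilon)^c)$ separately, each by a first-moment (union-bound / Markov) argument, and then show the bounds vanish under the growth conditions $d_n=o(n)$ and \eqref{eq:m_n ppc 2}. The two estimates are of a similar flavor: both reduce to counting how often the random renewal sets $R_{n,i_1}\cap R_{n,i_2}$ simultaneously hit a given block (or point), and both use that $\proba(k\in R_n)=1/w_n$ from Lemma \ref{lem:Rm}(i) together with independence across the index $i$.

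For $\proba(\Omega_{n,1}^c)$: by a union bound over $k=1,\dots,n$, $\proba(\Omega_{n,1}^c)\le \sum_{k=1}^n \proba(|J_n(k)|> p^*)$. Since $p^*=\binom{\what p}{2}$, the event $\{|J_n(k)|>p^*\}$ forces at least $\what p$ of the indices $i\in\{1,\dots,m_n\}$ to satisfy $k\in R_{n,i}$ (if fewer than $\what p$ indices are hit, the number of pairs $(i_1,i_2)$ with $i_1<i_2$ and $k\in R_{n,i_1}\cap R_{n,i_2}$ is at most $\binom{\what p-1}{2}\le p^*$). By independence of the $R_{n,i}$'s and $\proba(k\in R_{n,i})=1/w_n$, the probability that a specific set of $\what p$ indices are all hit is $w_n^{-\what p}$, so $\proba(|J_n(k)|>p^*)\le \binom{m_n}{\what p} w_n^{-\what p}\le m_n^{\what p} w_n^{-\what p}$. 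Summing over $k$ gives $\proba(\Omega_{n,1}^c)\le n m_n^{\what p}/w_n^{\what p}$, which is $o(1)$; in fact \eqref{eq:p*2} is tailored precisely so that even $nm_n^{\what p+1}/w_n^{\what p+1}\to0$, giving room to spare. (One should check that such $\what p$, equivalently $p^*$, exists: since $m_n\ll w_n/n^\delta$, we have $m_n/w_n \ll n^{-\delta}$, so $n(m_n/w_n)^{\what p}\to 0$ as soon as $\what p>1/\delta$ — pick such a $\what p$.)

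For $\proba(\Omega_{n,2}(\epsilon)^c)$: by Markov's inequality $\proba(N_n(\epsilon)\ge 2)\le \tfrac12\esp N_n(\epsilon)$, so it suffices to show $\esp N_n(\epsilon)\to0$. Expanding \eqref{eq:Nn}, $\esp N_n(\epsilon)=\sum_{j=1}^{k_n}\sum_{(i_1,i_2)\neq(i_1',i_2')}\proba\big(E^*_{n,i_1,i_2,j}(\epsilon)\cap E^*_{n,i_1',i_2',j}(\epsilon)\big)$. On each such event two (distinct but possibly overlapping) pairs of renewal sets each hit the block $\calI_{d_n,j}$ of length $d_n$, and the corresponding $V^*$ values exceed $\epsilon c_n$. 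The probability that $R_{n,i_1}\cap R_{n,i_2}$ meets a block of length $d_n$ is $O(d_n^{1-\beta}/w_n^2)$ — one point of $R_{n,i_1}$ must land in a window of size $O(d_n)$ (probability $O(d_n/w_n)$) and then, by the renewal structure of Lemma \ref{lem:Rm}(ii) together with \eqref{eq:u(n) asymp}, the set $R_{n,i_2}$ must also contain that point or a nearby renewal point, contributing the extra factor that with \eqref{eq:F} produces $O(d_n^{1-\beta}/w_n)$ overall; alternatively one invokes the tail-process estimate \eqref{eq:X_0 tail}–type bound already used in \citep{bai23tail}. The key combinatorial point is the index overlap: if the two pairs share no index, one gets the product of two such probabilities times the number of 4-tuples ($O(m_n^4)$) and the number of blocks ($k_n$), i.e. $O(k_n m_n^4 d_n^{2(1-\beta)}/w_n^4)$; the tail constraint $V^*/c_n>\epsilon$ contributes a further factor $\esp[(\Gamma_{i}\Gamma_{i'})^{\cdots}]$ handled as in the reference, and one checks this is $o(1)$ under $m_n\ll (w_n/d_n^\beta)\log n$ and $m_n \gg w_n^2/(n\log^{1/(1-\alpha)}n)$; if the pairs share one index, the count of 3-tuples is $O(m_n^3)$ but the probability of a shared renewal point hitting the block only improves things, so that term is also negligible. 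Assembling these gives $\esp N_n(\epsilon)\to0$.

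The main obstacle is the second estimate: controlling $\esp N_n(\epsilon)$ requires a careful joint tail bound on $\big(V^*_{i_1,i_2},\,\ind\{R_{n,i_1,i_2}\cap\calI_{d_n,j}\neq\emptyset\}\big)$ that is uniform over the block index $j$ and correctly tracks the dependence on $d_n$ through the renewal measure's $\beta$-regular variation — in particular isolating the "two renewal sets both hitting a length-$d_n$ window" probability, whose sharp order $d_n^{1-\beta}/w_n^2$ (rather than a naive $d_n^2/w_n^2$) is what makes the whole bound go to zero in the mesoscopically relevant regime. I expect the bookkeeping of the four cases of index-overlap between $(i_1,i_2)$ and $(i_1',i_2')$, combined with extracting the $\Gamma$-tail contribution from the $V^*/c_n>\epsilon$ constraint, to be where the real work lies; the first estimate is a routine union bound once $p^*$ is chosen via \eqref{eq:p*2}.
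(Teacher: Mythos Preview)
Your treatment of $\Omega_{n,1}$ is essentially correct (you could sharpen to $\what p+1$ indices since exactly $\what p$ hits give $|J_n(k)|=\binom{\what p}{2}=p^*$, not $>p^*$, matching the paper's bound $n m_n^{\what p+1}/w_n^{\what p+1}$; but your looser bound still works).

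For $\Omega_{n,2}(\epsilon)$ there are genuine gaps. First, the order you claim for $\proba(R_{n,i_1}\cap R_{n,i_2}\cap\calI_{d_n,j}\ne\emptyset)$ is wrong: the correct order is $\sim\mathfrak q_{F,2}\,d_n/w_n^2$ (Lemma~\ref{Lem:conv Q}(ii)), not $d_n^{1-\beta}/w_n^2$. You have conflated this with the single-set probability $\proba(R_{n,i}\cap\calI_{d_n,j}\ne\emptyset)\sim \mathsf C_F d_n^{1-\beta}/w_n$. Since $d_n^{1-\beta}<d_n$, you are \emph{underestimating} a quantity you need to bound from above.

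Second, and more seriously, you dismiss the one-shared-index case as ``only improving things'', when in fact it is the binding term and the entire reason the upper constraint $m_n\ll (w_n/d_n^\beta)\log n$ in \eqref{eq:m_n ppc 2} is present. In the paper's decomposition the disjoint-pair term $\mathrm I_n\le Ck_n m_n^4\rho_n(\epsilon)^2\le C/k_n\to0$ needs only $d_n=o(n)$; but the shared-index term, after the estimate $\proba(R_{n,1,2}\cap\calI_{d_n,1}\ne\emptyset,\,R_{n,1,3}\cap\calI_{d_n,1}\ne\emptyset)\le C d_n^{1+\beta}/w_n^3$, comes out as $\mathrm{II}_n\le C m_n d_n^\beta/(w_n\log n)$, which goes to zero if and only if the upper bound on $m_n$ holds. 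You do not see this because you never quantify the $V^*/c_n>\epsilon$ factor: the paper handles it by first replacing the dependent $V^*_{i_1,i_2}=(\Gamma_{m_n+1}^2/\Gamma_{i_1}\Gamma_{i_2})^{1/\alpha}$ with $V_{i_1,i_2}=(U_{i_1}U_{i_2})^{-1/\alpha}$ via the uniform order-statistics identity (this leaves $N_n(\epsilon)$ unchanged by exchangeability), and then uses $\proba(V_{1,2}>\epsilon c_n)\sim C\epsilon^{-\alpha}w_n^2/(m_n^2 n)$ explicitly. Without this step and the correct three-set estimate, the argument for $\esp N_n(\epsilon)\to0$ is incomplete.
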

\begin{proof}
For $\Omega_{n,1}$, we have
\begin{align*}
\proba(\Omega_{n,1}^c)
  \le  \sum_{k=1}^{n}  \binom{m_n}{\what p+1}w_{n}^{-\what p-1} 
  \le    C \frac{nm_{n}^{\what p+1}}{w_n^{\what p+1}}\rightarrow 0.
\end{align*} 
For $\Omega_{n,2}(\epsilon)$, we have
\[
\proba((\Omega_{n,2}(\epsilon))^c)=\proba(N_n(\epsilon)\ge 1)\le \esp N_n(\epsilon),
\]
and hence it suffices to show $\limn \esp N_n(\epsilon)=0$. 

Introduce 
\[
V_{i_1,i_2}:=(U_{i_1}U_{i_2})^{-1/\alpha},  \ i_1,i_2\in \N,\  i_1<i_2,
 \]
 with $\{U_i\}_{i\in \N}$ as i.i.d.~uniform random variables which are independent from everything else, and accordingly
\[
 E_{n,i_1,i_2,j}(\epsilon):=\ccbb{ \frac{V_{i_1,i_2}}{c_n}>\epsilon,\  R_{n,i_1,i_2}\cap \cal{I}_{d_n,j}\neq \emptyset  }.
\]
 Note that for each pair $(i_1,i_2)$ fixed,  $E_{n,i_1,i_2,j}(\epsilon)$ and $E^*_{n,i_1,i_2,j}(\epsilon)$ are not identically distributed. However, recall
 \[
 \pp{\frac{\Gamma_1}{\Gamma_{m_n+1}},\cdots,\frac{\Gamma_{m_n}}{\Gamma_{m_n+1}},\Gamma_{m_n+1}}\EqD \pp{U_{1:m_n},\ldots,U_{m_n:m_n},\Gamma_{m_n+1}},
 \] where $U_{1:m_n}<\cdots<U_{m_n:m_n}$ are the order statistics of $\{U_i\}_{i=1,\dots,m_n}$. Since the sum $N_n$ {in \eqref{eq:Nn}} is invariant with a permutation of the index set $\{i:\ 1\le i\le m_n\}$,   we see that the law of $N_n$ will not change if all the sets $E_{n,i_1,i_2,j}^*(\epsilon)$ involved are replaced by $E_{n,i_1,i_2,j}(\epsilon)$ accordingly.  In particular,
\begin{align}
\esp N_n(\epsilon) &
=\sum_{j=1}^{k_n}  \sum_{\substack{(i_1,i_2),(i_1',i_2')\in\calW_{m_n}\\ (i_1,i_2)\neq (i_1',i_2')}} \esp\pp{\ind_{E^*_{n,i_1,i_2,j}(\epsilon)}\ind_{E^*_{n,i_1',i_2',j}(\epsilon)}}\nonumber\\
&
=  \sum_{j=1}^{k_n}  \sum_{\substack{(i_1,i_2),(i_1',i_2')\in\calW_n \\ \{i_1,i_2\} \cap \{i_1',i_2'\}=\emptyset}}   \esp \ind_{E_{n,i_1,i_2,j}(\epsilon)}\esp \ind_{E_{n,i_1',i_2',j}(\epsilon)}\label{eq:EN}\\
& \quad + \sum_{j=1}^{k_n}  \sum_{\substack{(i_1,i_2),(i_1',i_2')\in\calW_n\\ |\{i_1,i_2\}\cap \{i_1',i_2')\}|=1}} \esp\pp{ \ind_{E_{n,i_1,i_2,j}(\epsilon)}\ind_{E_{n,i_1',i_2',j'}(\epsilon)} }.\nonumber \\
& =k_n \binom{m_n}4 (\esp\ind_{E_{n,1,2,1}(\epsilon)})^2 + 3k_n\binom{m_n}3 \esp \pp{\ind_{E_{n,1,2,1}(\epsilon)}\ind_{E_{n,1,3,1}(\epsilon)}}  =: \mathrm{I}_n+ \mathrm{II}_n.\nonumber
\end{align}
\color{black}
We start with some estimates. 
First,  since
\[
\proba(V_{1,2}>x)= \proba(U_1U_2<x^{-\alpha})\sim  \alpha   x^{-\alpha} \log x,
\]
  we have   for $y>0$,
\begin{equation}\label{eq:V_1,2 tail}
\proba(V_{1,2}>yc_n) \sim  \frac{2  w_{n}^2}{m_n^2 (n\log n) }   \log\pp{\frac{nm_n^2}{w_n^{2}}}  y^{-\alpha} \sim \frac{2\log(nm_n^2/w_n^2)}{\log n}\frac{w_n^2}{m_n^2 n} y^{-\alpha}.
\end{equation}
We write the dependence on $m_n$ explicitly for later discussions. 
We also have (see Lemma \ref{Lem:conv Q})
\[
\proba\pp{R_{n,1,2}\cap\calI_{d_n,1}\ne\emptyset} \sim \mathsf q_{F,2} \frac{d_n}{w_n^2}.
\]
This and \eqref{eq:V_1,2 tail} then imply as $n\rightarrow\infty$,
\begin{align}
\rho_n(\epsilon) & :=\proba\pp{{E_{n,1,2,1}(\epsilon)}} = \proba(c_n^{-1}V_{1,2}>\epsilon)\proba\pp{R_{n,1,2}\cap \cal{I}_{d_n,1}\neq \emptyset}\nonumber\\
& \le C\frac{\log(nm_n^2/w_n^2)}{\log n}\mathsf q_{F,2}  \epsilon^{-\alpha} \frac{w_n^2}{m_n^2 n} \frac{ d_n}{w_n^2}
\le  \frac{ C}{\epsilon^\alpha m_n^2 k_n}.\label{eq:rho_n upper}
\end{align}
By \eqref{eq:rho_n upper},  we have
\[ 
\mathrm{I}_n\le  C k_n m_n^4 \rho_n^2(\epsilon)\le C  k_n^{-1}  \rightarrow 0
\] 
as $n\rightarrow\infty$, where we used the assumption $n/d_n\to\infty$. We also have
\begin{align*}
\proba(R_{n,1,2}\cap\calI_{d_n,1}\ne\emptyset,R_{n,1,3}\cap\calI_{d_n,1}\ne\emptyset)& \le \summ k1{{d_n}}\summ{k'}1{{d_n}}\proba(k\in R_{n,1,2}\cap\calI_{d_n,1},k'\in R_{n,1,3}\cap\calI_{d_n,1}) \\
&\le \frac2{w_{n}^3}\summ k1{d_n}\sum_{k'=k}^{d_n}u(k'-k)\le C \frac{d_n^{1+\beta}}{w_{n}^3},
\end{align*}
where we have used \eqref{eq:u(n) asymp} in the last inequality,  and hence
\[
\mathrm{II}_n\le C k_n m_n^3 \frac{d_n^{1+\beta}}{w_n^3}c_n^{-\alpha} \sim C \frac {m_n d_n^\beta}{w_n \log n}.
\]
For the last expression to tend to zero, we need $m_n\ll w_n\log n/d_n^\beta$, which is exactly the assumption on the upper bound of $m_n\to\infty$ in \eqref{eq:m_n ppc}. We have thus proved \eqref{eq:Omega_n(epsilon)}.
\end{proof}

\begin{Prop}\label{prop:5.2}
Under the assumptions in Lemma \ref{lem:5.1'}, we have
\[\limn\esp \left|  \xi_n^*(f)- \xi_n^\circ(f) \right|=0
\]  for all  $f\in\Clipb(S\times[0,1])$.
\end{Prop}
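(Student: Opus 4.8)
\textbf{Proof proposal for Proposition \ref{prop:5.2}.}

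The plan is to compare the two point processes $\xi_n^*$ and $\xi_n^\circ$ on the good events $\Omega_{n,1}\cap\Omega_{n,2}(\epsilon)$ provided by Lemma \ref{lem:5.1'}, and to control the error on the complement using \eqref{eq:Omega_n(epsilon)} together with a uniform bound on $|\xi_n^*(f)-\xi_n^\circ(f)|$. The key observation is the \emph{de-aggregation} identity: the point $\vv X_{d_n,j}^*/b_n$ carried by $\xi_n^*$ is (after rescaling by $b_n$, equivalently by $c_n$ up to the factor $w_n^{2/\alpha}m_n^{-2/\alpha}$) the \emph{sum} over all pairs $(i_1,i_2)\in\calW_{m_n}$ hitting $\calI_{d_n,j}$ of the contributions $c_n^{-1}V_{i_1,i_2}^*\vv Q_{n,d_n,j}^{(i_1,i_2)}$, whereas $\xi_n^\circ$ records each such contribution as a \emph{separate} atom. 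On the event $\Omega_{n,2}(\epsilon)$, for each block $j$ there is at most one pair $(i_1,i_2)$ for which $c_n^{-1}V_{i_1,i_2}^*>\epsilon$ and $R_{n,i_1,i_2}\cap\calI_{d_n,j}\ne\emptyset$; so up to an $\epsilon$-small perturbation the sum collapses to that single dominant term, and the two point processes agree up to atoms of size $\le\epsilon$ (which, since $f$ is Lipschitz and the metric on $S$ scales, contribute $O(\epsilon)$ after being summed against the finitely-many relevant blocks).

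First I would make the rescaling explicit: since $b_n = c_n\,(w_n/m_n)^{2/\alpha}\cdot(\text{the implicit normalization})$ — more precisely by definition $c_n = (b_n^\alpha m_n^2/w_n^2)^{1/\alpha}$, so $(w_n/m_n)^{2/\alpha}V_{i_1,i_2}^*/b_n = V_{i_1,i_2}^*/c_n$ — we get
\[
\frac{\vv X_{d_n,j}^*}{b_n} = \sum_{(i_1,i_2)\in\calW_{m_n}} \frac{V_{i_1,i_2}^*}{c_n}\,\vv Q_{n,d_n,j}^{(i_1,i_2)},
\]
where the sum is really over the (on $\Omega_{n,1}$, at most $p^*$ per coordinate) pairs with $R_{n,i_1,i_2}\cap\calI_{d_n,j}\ne\emptyset$. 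Then I would split each block's contribution into the ``large'' pairs ($V_{i_1,i_2}^*/c_n>\epsilon$) and ``small'' pairs, and invoke $\Omega_{n,2}(\epsilon)$ to say there is at most one large pair per block. For blocks with exactly one large pair, $\xi_n^*$ places an atom at the sum (large term plus small-term remainder of sup-norm $\le$ something controlled by $p^*\epsilon$ on $\Omega_{n,1}$, or better, an $\epsilon$-order quantity), while $\xi_n^\circ$ places that same large atom plus several atoms of size $\le\epsilon$; the contribution of the latter to $f$ is bounded using that $f$ vanishes near $[\vv 0]$ at a Lipschitz rate and that, on $S$, points of norm $\le\epsilon$ are within $O(\epsilon)$ of the boundary. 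Blocks with no large pair contribute only $\le\epsilon$-atoms to both, again giving $O(\epsilon)$. Taking expectations, I would get
\[
\esp|\xi_n^*(f)-\xi_n^\circ(f)| \le C_f\,\epsilon\cdot\esp\big(\#\{\text{relevant atoms}\}\big) + \snn{f}_\infty\big(\proba(\Omega_{n,1}^c)+\proba(\Omega_{n,2}(\epsilon)^c)\big),
\]
where $\esp(\#\text{relevant atoms})$ is $O(1)$ because $\esp N$-type first-moment bounds (as in \eqref{eq:rho_n upper}, summed: $k_n m_n^2\rho_n(\epsilon)\le C\epsilon^{-\alpha}$) show the expected number of atoms of $\xi_n^\circ$ with first coordinate of norm $>\epsilon$ is $O(\epsilon^{-\alpha})$, hence finite. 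Letting $n\to\infty$ kills the second term by Lemma \ref{lem:5.1'}, leaving $C_f\epsilon^{1-\alpha}$ (or similar), and then letting $\epsilon\downarrow 0$ finishes the proof.

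The main obstacle I anticipate is handling the topology of $S=\wt\ell_0\setminus\{[\vv0]\}$ carefully when bounding the Lipschitz perturbation: one must check that replacing a single atom at $\vv v$ by $\vv v$ plus extra small-norm coordinates (the small pairs' contributions, which live on possibly disjoint renewal sets within the same block) moves the point in the quotient metric by $O(\epsilon)$, uniformly, and that the ``extra'' atoms of $\xi_n^\circ$ with norm $\le\epsilon$ genuinely contribute $O(\epsilon)$ to $f$ — this uses that $f\in\Clipb(S\times[0,1])$ is bounded \emph{and} that such $f$ must decay as the argument approaches $[\vv0]$, so $|f(\vv v,u)|\le C_f\snn{\vv v}$ for $\snn{\vv v}$ small; combined with the first-moment count $O(\epsilon^{-\alpha})$ of relevant atoms one gets the $\epsilon^{1-\alpha}$ bound. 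A secondary technical point is that the permutation-invariance swap $V^*\leftrightarrow V$ used in Lemma \ref{lem:5.1'} to compute $\esp N_n(\epsilon)$ should be reused here to make the first-moment estimates clean, but this is routine given the exchangeability already exploited in the lemma.
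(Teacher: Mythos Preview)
Your proposal is correct and follows the paper's proof essentially line for line: write $\vv X_{d_n,j}^*/b_n=\sum_{(i_1,i_2)}c_n^{-1}V^*_{i_1,i_2}\vv Q^{(i_1,i_2)}_{n,d_n,j}$, restrict to $\Omega_{n,1}\cap\Omega_{n,2}(\epsilon)$ so that each block carries at most one $\epsilon$-large pair, bound the per-block difference by $Cp^*\epsilon$ via the Lipschitz property, and close with the first-moment count $k_n\binom{m_n}{2}\rho_n(\epsilon)=O(\epsilon^{-\alpha})$ to obtain an $O(\epsilon^{1-\alpha})$ bound. The paper's treatment of the small atoms is slightly cleaner than your Lipschitz-decay argument: since $f\in\Clipb(S\times[0,1])$ has support bounded away from $[\vv 0]$, one may choose $\epsilon$ so small that $f$ vanishes identically on $\{\|\vv z\|_\infty\le p^*\epsilon\}\times[0,1]$, whence every atom of norm $\le\epsilon$ in $\xi_n^\circ$ (and, on $\Omega_{n,1}$, every $\xi_n^*$-atom of norm $\le p^*\epsilon$) contributes exactly zero and no counting of small atoms is needed.
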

\begin{proof}
With $\epsilon>0$ and $p^*$  as in \eqref{eq:p*2}, define \[E_{p^*\epsilon}:=S_{p^*\epsilon}\times [0,1] ,\] where $S_{p^*\epsilon}=\{\vv z\in S :\  \|\vv z\|_\infty>  p^* \epsilon \}$. 
Now suppose $f\in \Clipb(S\times[0,1])$ whose support is within $E_{p^*\epsilon}$. 
Recall 
\equh\label{eq:xi_n*}
\xi_n^*(f) = \summ j1{k_n}f\pp{\frac{\vvX_{d_n,j}^*}{b_n}, \frac j{k_n}} \qmand \xi_n^\circ(f) = \summ j1{k_n}\sum_{1\le i_1<i_2\le m_n}f\pp{\frac{V_{i_1,i_2}^*\vv Q_{n,d_n,j}\topp{i_1,i_2}}{c_n},\frac j{k_n}}.
\eque
For fixed $j=1,\dots,k_n$, we examine the difference
\[
\mathfrak d_{n,j}:=f\pp{\frac{\vvX_{d_n,j}^*}{b_n}, \frac j{k_n}} - \sum_{1\le i_1<i_2\le m_n}f\pp{\frac{V_{i_1,i_2}^*\vv Q_{n,d_n,j}\topp{i_1,i_2}}{c_n},\frac j{k_n}}.
\]
Observe that on the event $\Omega_{n,1}\cap \Omega_{n,2}(\epsilon)$,   there are two cases.
\begin{enumerate}[(i)]
\item None of $1\le i_1<i_2\le m_n $ satisfies both $R_{n,i_1,i_2}\cap \cal{I}_{d_n,j}\neq \emptyset$ and   $   c_n^{-1} V^*_{i_1,i_2}>\epsilon $. In this case,  
\[
\frac{\nn{\vvX_{d_n,j}^*}_\infty}{b_n}\le p^*\epsilon \qmand \frac1{c_n}\max_{(i_1,i_2)\in\calW_n}\nn{
 V_{i_1,i_2}^*\vv Q_{n,d_n,j}\topp{i_1,i_2}}_\infty\le \epsilon.
\]
Therefore,  $\mathfrak d_{n,j} = 0$.

\item Exactly one of $(i_1,i_2)\in\calW_n$ is such that the event $E^*_{n,i_1,i_2,j}(\epsilon) = \{R_{n,i_1,i_2}\cap \cal{I}_{d_n,j}\neq \emptyset,  c_n^{-1} V^*_{i_1,i_2}>\epsilon\}$ holds. Denote this (random) pair by $(\what i_1,\what i_2)$. Then,  
{
\[
\frac1{b_n}\vvX_{d_n,j}^*  = \frac1{c_n}V_{\what i_1,\what i_2}^*+ \sum _{\substack{(\iota_1,\iota_2)\in \what I}}\frac1{c_n}V_{\iota_1,\iota_2}^*,
\]
where $\what I = \{(i_1,i_2)\in\calW_n:R_{n,i_1,i_2}\cap\calI_{d_n,j}\ne\emptyset\}\setminus  \{(\what i_1,\what i_2)\}$} is a random subset of $\calW_{m_n}$ with at most $p^*-1$ elements, and for each $(\iota_1,\iota_2)\in\what I$ we have $V^*_{\iota_1,\iota_2}/c_n\le \epsilon$.
 Then by the  Lipchitz property of $f$, we have
\[
\abs{\mathfrak d_{n,j}} = \abs{f\pp{\frac1{c_n}V^*_{\what i_1,\what i_2}+ \sum _{(\iota_1,\iota_2)\in \what I}\frac1{c_n}V^*_{\iota_1,\iota_2},\frac j{k_n}} - f\pp{\frac1{c_n}V^*_{\what i_1,\what i_2},\frac j{k_n}}}\le C(p^*-1)\epsilon.
\]
\end{enumerate}
In summary, we have for every $j=1,\dots,k_n$, 
\begin{align*}
\esp\pp{|\mathfrak d_{n,j}|;\Omega_{n,1}\cap\Omega_{n,2}(\epsilon)} &= \sum_{1\le i_1<i_2\le m_n}\esp\pp{|\mathfrak d_{n,j}|;E^*_{n,i_1,i_2,j}(\epsilon), \Omega_{n,1}\cap\Omega_{n,2}(\epsilon)}\\
&\le C(p^*-1)\epsilon \times \binom {m_n}2\proba(E_{n,1,2,1}(\epsilon)).
\end{align*}
Therefore,  we have
 by \eqref{eq:rho_n upper}   that
\begin{align*}
\limsupn\esp \left|  \xi_n^*(f)- \xi^\circ_n(f) \right|& = \limsupn \esp\pp{|\xi_n^*(f)-\xi^\circ_n(f)|; \Omega_{n,1}\cap\Omega_{n,2}(\epsilon)} \\
& \quad + C\limsupn  \proba((\Omega_{n,1}\cap \Omega_{n,2}(\epsilon))^c)\\
&\le C  (p^ *-1) \epsilon  \limsup_{n\rightarrow\infty} k_n \binom{m_n}2\rho_n(\epsilon) \\&\le   C   \epsilon^{1-\alpha}.
\end{align*}
Letting $\epsilon\downarrow 0$ and noting $\alpha\in (0,1)$, we conclude the proof.
 \end{proof}

\subsection{Poisson approximation}
\begin{Prop}\label{prop:5.3}
Under the assumption 
$d_n\to\infty, d_n = o(n)$ and \eqref{eq:m_n ppc}, we have  the weak convergence of point processes
\[
\xi_{n}^\circ\Rightarrow  \xi,
\]
as $n\rightarrow\infty$.
\end{Prop}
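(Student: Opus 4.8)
The plan is to establish the weak convergence $\xi_n^\circ \Rightarrow \xi$ via a Poisson approximation argument (the ``two-moment method''), which is standard once the point process $\xi_n^\circ$ is recognized as a superposition of a triangular array of almost-independent point masses. First I would reduce to convergence of Laplace functionals, or equivalently to showing $\esp\,e^{-\xi_n^\circ(f)} \to \esp\,e^{-\xi(f)}$ for all nonnegative $f\in\Clipb(S\times[0,1])$ whose support is bounded away from $[\vv 0]$ in the first coordinate, i.e.\ supported in $S_{\epsilon}\times[0,1]$ for some $\epsilon>0$ (this suffices by the topological setup in Section \ref{sec:topo} and \citep[Theorem 7.1.17]{kulik20heavy}). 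On such $f$, only the ``large'' atoms matter, so I would truncate: replace each summand by its restriction to the event $E^*_{n,i_1,i_2,j}(\epsilon')$ for a suitable $\epsilon'<\epsilon/p^*$, incurring an error that vanishes by Lemma \ref{lem:5.1'} (the event $\Omega_{n,1}\cap\Omega_{n,2}(\epsilon')$ has probability tending to one, so with overwhelming probability each block $j$ carries at most one large atom, and its value is then exactly $c_n^{-1}V^*_{i_1,i_2}$ times the cluster shape $\vv Q_{n,d_n,j}^{(i_1,i_2)}$).

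Next I would invoke a Poisson limit theorem for null arrays of point processes — e.g.\ \citep[Theorem 7.2.x]{kulik20heavy} or the classical criterion in Kallenberg — which requires two ingredients: (a) the intensity measures $\esp\,\xi_n^\circ(\cdot)$ converge vaguely to the intensity of $\xi$, and (b) a negligibility/second-moment condition ensuring no two atoms coincide in the limit. For (a), the block index $j/k_n$ becomes uniform on $[0,1]$; the value coordinate is governed by $c_n^{-1}V_{i_1,i_2}^*$ conditioned on $R_{n,i_1,i_2}\cap\calI_{d_n,j}\ne\emptyset$, whose tail is computed in \eqref{eq:V_1,2 tail}, while the conditional law of the cluster shape $\vv Q_{n,d_n,j}^{(i_1,i_2)}$ converges to that of $\vv Q = \vv\Theta$ by Lemma \ref{Lem:conv Q}. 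Summing over the $\binom{m_n}{2}$ pairs and $k_n$ blocks, the expected number of atoms with value exceeding $y$ should converge to $\theta\, y^{-\alpha}$ with $\theta = (1-2\beta)\mathfrak q_{F,2}$ — here the factor $(1-2\beta)$ must emerge from the interplay of the $\log(nm_n^2/w_n^2)/\log n$ factor in \eqref{eq:V_1,2 tail} with the growth rate of $m_n$ pinned by \eqref{eq:m_n ppc}, since $\log(nm_n^2/w_n^2)/\log n \to 1 - 2\beta$ exactly when $m_n \asymp w_n^2(\log n)^{c}/n$ in the prescribed window. The resulting intensity $\theta\alpha y^{-\alpha-1}\,dy\otimes du\otimes \calL(\vv Q)$ is precisely that of $\xi = \sum_\ell \delta_{(\vv Q_\ell(\Gamma_\ell/\theta)^{-1/\alpha}, U_\ell)}$.

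For (b), the negligibility condition, I would bound the second-moment-type quantity controlling coincidences of atoms — essentially $\esp[(\xi_n^\circ(g))^2] - \esp[\xi_n^\circ(g)]^2$ for test functions $g$ — and show it is asymptotically negligible; this is the place where the upper constraint $m_n \ll w_n^2\log n/n$ in \eqref{eq:m_n ppc} is consumed, as flagged in the overview (``the bound \eqref{eq:bn2 bound}''). The main obstacle, and the step requiring the most care, is exactly this second-moment control: one must handle pairs $(i_1,i_2),(i_1',i_2')$ that share an index, because then $V^*_{i_1,i_2}$ and $V^*_{i_1',i_2'}$ are dependent through the common $\Gamma$, and simultaneously the two clusters $R_{n,i_1,i_2}$ and $R_{n,i_1',i_2'}$ share the renewal set $R_{n,i_1}$, so they are positively correlated on the blocks $\calI_{d_n,j}$. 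The estimate $\proba(R_{n,1,2}\cap\calI_{d_n,1}\ne\emptyset, R_{n,1,3}\cap\calI_{d_n,1}\ne\emptyset)\le C d_n^{1+\beta}/w_n^3$ from the proof of Lemma \ref{lem:5.1'} is the key input, and combined with the tail bound for the dependent pair $(V^*_{1,2},V^*_{1,3})$ one gets a contribution of order $m_n d_n^\beta/(w_n\log n)$, which vanishes under \eqref{eq:m_n ppc 2}; the disjoint-index pairs factor and contribute negligibly by the $k_n^{-1}$ bound already seen in $\mathrm I_n$. Once both (a) and (b) are in hand, the Poisson limit theorem delivers $\xi_n^\circ \Rightarrow \xi$, and undoing the initial truncation (again via Lemma \ref{lem:5.1'}) completes the proof.
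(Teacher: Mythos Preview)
Your mean-measure computation is right, and you correctly identify that the factor $1-2\beta$ comes from $\log(nm_n^2/w_n^2)/\log n$ under \eqref{eq:m_n ppc}. The gap is in the second-moment step. You treat the Poisson approximation as if the dependency neighbourhoods were confined to a single block $j$: your ``coincidence'' analysis just replays the $N_n(\epsilon)$ estimate from Lemma \ref{lem:5.1'} (same $j$, pairs $(i_1,i_2)$ and $(i_1',i_2')$ sharing an index), arriving at the bound $m_n d_n^\beta/(w_n\log n)$ that vanishes already under the weaker constraint \eqref{eq:m_n ppc 2}. But the Bernoulli variables $\ind_{E^*_{n,i_1,i_2,j}(\epsilon)}$ and $\ind_{E^*_{n,i_1',i_2',j'}(\epsilon)}$ are dependent across \emph{different} blocks $j\ne j'$ whenever $\{i_1,i_2\}\cap\{i_1',i_2'\}\ne\emptyset$, because they share a renewal set $R_{n,i}$ (and a common $\Gamma_i$). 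In the Arratia--Goldstein--Gordon scheme the dependency neighbourhood of $(i_1,i_2,j)$ is therefore $\{(i_1',i_2',j'):(i_1,i_2)\sim(i_1',i_2'),\ j'\text{ arbitrary}\}$, not just $j'=j$.

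Two cross-block terms are missing from your argument. First, the case $(i_1,i_2)=(i_1',i_2')$ with $j\ne j'$: the same intersection $R_{n,1,2}$ can hit two distinct blocks, and the relevant input is $\rho_{n,d_n}^{(2)}$ from Lemma \ref{lem:rho}, giving a contribution of order $d_n^{2\beta-1}$ --- this is precisely where $\beta<1/2$ enters, and you never invoke it. Second, the one-shared-index case must be summed over \emph{all} block pairs $(j,j')$, not one block; the correct input is $\rho_{n,d_n}^{(1)}\le Cn^{1+\beta}/w_n^3$ from Lemma \ref{lem:rho} (note $n^{1+\beta}$, not the $d_n^{1+\beta}$ you cite), which together with $\proba(V_{1,2}>c_n\epsilon,\,V_{2,3}>c_n\epsilon)\sim 2(c_n\epsilon)^{-\alpha}$ yields the bound $Cm_n n/(w_n^2\log n)$. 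It is this term --- not your within-block term --- that consumes the stronger upper constraint $m_n\ll w_n^2\log n/n$ in \eqref{eq:m_n ppc}, exactly as the overview flags (``needed only for the second-moment control \ldots in \eqref{eq:bn2 bound}''). Without these two cross-block estimates the Poisson approximation does not close.
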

\begin{proof}
We prove a stronger result:
with
\begin{equation}\label{eq:eta_n}
\eta_n=\summ j1{k_n} \sum_{1\le i_1<i_2 \le m_n}
 \delta_{ \left( c_n^{-1} V^*_{i_1,i_2}, \ \vv Q_{n,d_n,j}^{(i_1,i_2)}   , \ j/k_n   \right) }\inddd{R_{n,i_1,i_2}\cap \cal{I}_{d_n,j}\neq \emptyset  },
\end{equation}
we have as $n\rightarrow\infty$,
\equh\label{eq:eta}
\eta_n \Rightarrow \eta  :=\sif\ell1 \ddelta{ \theta^{1/\alpha} \Gamma_\ell^{-1/\alpha},\  \vv{Q}_\ell ,\ U_\ell     }.
\eque
The desired result follows immediately from the above. Indeed, define
\equh\label{eq:T}
T: (0,\infty]\times S^* \times [0,1]\mapsto  S    \times [0,1], \ (y,\vv{z},u)\mapsto (y\vv{z},u) ,
\eque
where $S^*=\{[\vv z]\in S:  \ \|\vv z\|_\infty= \wt{d}([\vv z],[\vv 0]) =1\}$ is equipped with the subspace topology. It is elementary to verify that $T$ is continuous. In addition, the preimage $T^{-1}K$ is bounded for any bounded $K\subset S    \times [0,1]$.   Then apply a generalization of  \cite[Proposition 5.5]{resnick07heavy}, where the role of compact sets is replaced by bounded sets: the boundedness of  $ S \times [0,1]$ is formed by subsets $B$  satisfying $B\subset A\times [0,1]$, with $A\subset S$ and $\inf_{[\vv x]\in A}\wt{d}([\vv x],[\vv 0])>0$, and the boundedness of  $(0,\infty]\times S^* \times [0,1]$ is formed by subsets $B$ satisfying $B\subset (\delta,\infty]\times S^*\times [0,1]$ for some $\delta>0$.

It remains to prove \eqref{eq:eta}. 
Take disjoint $E_s= (y_{s,0},y_{s,1}]\times B_s\times I_s $, $s=1,\ldots,d$, where $B_s$ is a Borel set in $\wt{\ell}_0\setminus\{ \wt{ \vv 0 } \}$, and $I_s$ is a subinterval of $[0,1]$ of the form $(a_s,b_s]$ or $[0,b_s]$  $s=1,\ldots,d$.  
Since sets of the form $E_s$ form a dissecting semiring (see \cite[Section 1.1]{kallenberg17random}), in view of \cite[Theorem 4.11]{kallenberg17random}, it suffices to show the joint convergence
\begin{equation}\label{eq:joint poisson}
\left( \eta_n(E_s)\right)_{s=1,\ldots,d} \Rightarrow  \left( \eta(E_s)\right)_{s=1,\ldots,d} \mmas n\to\infty.
\end{equation}
We prove \eqref{eq:joint poisson} by the two-moment method for the Poisson approximation (see \citep{arratia89two} and \citep[Theorem 2.3]{penrose03random}).
Write 
\[
 \eta_n(E_s)= \summ j1{k_n} \sum_{1\le i_1<i_2 \le m_n} \chi_{n,s}(i_1,i_2,j)
\]
where
\[
\chi_{n,s}(i_1,i_2,j)=
 \inddd{ c_n^{-1} V^*_{i_1,i_2}\in (y_{s,0},y_{s,1}], \ \vv Q_{d_n,j}^{(i_1,i_2)} \in B_s   , \ j/k_n\in I_s   } \inddd{ R_{n,i_1,i_2}\cap \cal{I}_{d_n,j}\neq \emptyset  }
\]
Note that by choosing $\epsilon=\min_{s=1,\dots,d} y_{s,0}>0$, 
\begin{equation}\label{eq:chi replace}
\chi_{n,s}(i_1,i_2,j)\le \ind_{E^*_{n,i_1,i_2,j}(\epsilon)}= \inddd{ c_n^{-1} V^*_{i_1,i_2}>\epsilon,\  R_{n,i_1,i_2}\cap \cal{I}_{d_n,j}\neq \emptyset  }.
\end{equation}
Write
\[
\calV_{m_n,d_n}:=\{(i_1,i_2,j):\  1\le i_1<i_2\le m_n,\  j=1,\dots,k_n\},
\]
and  $(i_1,i_2)\sim (i_1',i_2')$ if $\{i_1,i_2\}\cap \{i_1',i_2'\}\neq\emptyset$. Note that when $(i_1,i_2)\not\sim(i_1',i_2')$, then $\chi_{n,s}(i_1,i_2,j)$ and $\chi_{n,s}(i_1',i_2',j')$ are independent Bernoulli random variables for all $j,j'$.
In order to show \eqref{eq:joint poisson}, by \citep[Theorem 2.3]{penrose03random}, it suffice to show 
\equh\label{eq:mean conv}
\esp \eta_n(E_s)   \rightarrow \esp \eta(E_s), \quad s=1,\ldots,d,
\eque
\begin{align}\label{eq:b_{n,1}}
b_{n,1}(\epsilon) &:=\sum_{\substack{(i_1,i_2,j), (i_1',i_2',j')\in \cal{V}_{m_n,d_n}\\ (i_1,i_2)\sim  (i_1',i_2')}} \esp\pp{ \ind_{E^*_{n,i_1,i_2,j}(\epsilon)}  \ind_{E^*_{n,i_1',i_2',j'}(\epsilon)} }\rightarrow 0,\\
\label{eq:b_{n,2}}
b_{n,2}(\epsilon)& :=\sum_{\substack{(i_1,i_2,j)\ne (i_1',i_2',j')\in \cal{V}_{m_n,d_n}\\ (i_1,i_2)\sim  (i_1',i_2')}} \esp \pp{\ind_{E^*_{n,i_1,i_2,j}(\epsilon)} \ind_{E^*_{n,i_1',i_2',j'}(\epsilon)}} \rightarrow 0,
\end{align}
  as $n\rightarrow\infty$. {Similarly as in the argument around \eqref{eq:EN},  the sets $E_{n,i_1,i_2,j}^*(\epsilon)$ can be replaced by $E_{n,i_1,i_2,j}(\epsilon)$ without changing the values of $b_{n,1}(\epsilon)$ and $b_{n,2}(\epsilon)$, which we shall implement below. }
Note that when controlling the second moments that are necessary for applying \citep[Theorem 2.3]{penrose03random}, instead of computing the corresponding moments of $\chi_{n,s}(i_1,i_2,j)$ above, it suffices to establish \eqref{eq:b_{n,1}} and \eqref{eq:b_{n,2}} due to the upper bound in \eqref{eq:chi replace}. Then, the above desired convergence has already been established in \citep{bai24phase}. We summarize these estimates in the following.

For \eqref{eq:mean conv}, by exchangeability and recalling \eqref{eq:V_1,2 tail} we have
\begin{align*}
\esp \eta_{n}(E_s)&=k_n \binom{m_n}2 \proba\pp{c_n^{-1}V_{1,2}\in (y_{s,0},y_{s,1} ]}
 \sum_{j/k_n \in I_s}\proba\pp{\vv Q_{n,d_n,j}^{(1,2)}\in B_s, \vv Q_{n,d_n,j}\topp{1,2}\ne\vv 0}\\
 & = k_n \binom{m_n}2 \proba\pp{c_n^{-1}V_{1,2}\in (y_{s,0},y_{s,1} ]}
\\
& \quad\times \proba\pp{\vv Q_{n,d_n,1}^{(1,2)}\in B_s\mmid R_{n,1,2}\cap\calI_{d_n,1}\ne\emptyset} \proba \pp{R_{n,1,2}\cap\calI_{d_n,1}\ne\emptyset}  \pp{ \sum_{j/k_n \in I_s}1}.
\end{align*}
Now we have a more precise estimate than \eqref{eq:rho_n upper}:
\begin{align}
\rho_n(\epsilon) & = \proba(c_n^{-1}V_{1,2}>\epsilon)\proba\pp{R_{n,1,2}\cap \cal{I}_{d_n,1}\neq \emptyset} \sim 2\frac{\log(nm_n^2/w_n^2)}{\log n}\mathsf q_{F,2}  \epsilon^{-\alpha} \frac{w_n^2}{m_n^2 n} \frac{ d_n}{w_n^2}\label{eq:rho_n-a}\\
&\sim 2 \vartheta_1 \epsilon^{-\alpha} \frac{1}{m_n^2 k_n},\nonumber
\end{align}
where the last relation follows from \eqref{eq:m_n ppc}. 
We also have $\limn\proba\spp{\vv Q_{n,d_n,1}^{(1,2)}\in B_s\mid R_{n,1,2}\cap\calI_{d_n,1}\ne\emptyset}= \proba(\vv Q\in B_s)$ by Lemma \ref{Lem:conv Q}. 
Therefore,
\equh
\esp \eta_{n}(E_s)\sim 
   \vartheta_1
\left(y_{s,0}^{-\alpha}-y_{s,1}^{-\alpha}\right)  \proba\pp{\vv Q \in B_s } \Leb(I_s),\label{eq:mean}
\eque
where $\Leb$ stands for the Lebesgue measure.
We have proved \eqref{eq:mean conv}.

For  \eqref{eq:b_{n,1}},  by exchangeability, an elementary combinatorial calculation, and \eqref{eq:rho_n upper}, we have
\begin{align*}
b_{n,1}(\epsilon)\le  C   k_n^2 m_n^3      \left( \frac{1}{m_n^2 k_n}\right)^2\le C \frac{1}{m_n}\rightarrow 0.
\end{align*}
It remains to verify \eqref{eq:b_{n,2}} with $E_{n,i_1,i_2,j}(\epsilon)$ replaced by $E^*_{n,i_1,i_2,j}(\epsilon)$.
First, for $(i_1,i_2,j)\sim (i_1',i_2',j')$ and $(i_1,i_2,j)\neq  (i_1',i_2',j')$, we have
\begin{align*}
&\esp \pp{
\ind_{E_{n,i_1,i_2,j}(\epsilon)} \ind_{E_{n,i_1',i_2',j'}(\epsilon)}}\\= &
\begin{cases}
\proba(c_n^{-1 }V_{1,2}>\epsilon, c_n^{-1} V_{2,3}>\epsilon) \proba(R_{n,1,2}\cap \cal{I}_{n,j} \neq \emptyset, R_{n,2,3}\cap \cal{I}_{n,j'} \neq \emptyset   ),  &\hspace{-.2cm} |\{i_1,i_2\}\cap\{i_1',i_2'\}|=1. 
\\
 \proba(c_n^{-1 }V_{1,2}>\epsilon) \proba(R_{n,1,2}\cap \cal{I}_{n,j} \neq \emptyset, R_{n,1,2}\cap \cal{I}_{n,j'} \neq \emptyset   ), &  \hspace{-.2cm}(i_1,i_2)=(i_1',i_2'), j\neq j'.
\end{cases}
\end{align*}
Set $\rho_{n,d_n}\topp1 $ and $\rho_{n,d_n}\topp2 $ as in Lemma \ref{lem:rho}.
Then it follows from a  simple combinatoric consideration,  
the observation 
\[
\proba(U_1 U_2< x, U_{2}U_{3}<x) =\int_0^1\proba(U_1 < x/u)^2 du =\int_0^1   [(x/u) \wedge 1]^2 du  \sim 2x  \text{ as }x\rightarrow 0,
\]
Lemma \ref{lem:rho} and \eqref{eq:V_1,2 tail},     that 
\begin{align}\label{eq:bn2 bound}
b_{n,2}(\epsilon)& \le  C m_n^3 \proba\pp{c_n^{-1 }V_{1,2}>\epsilon, \ c_n^{-1} V_{2,3}>\epsilon} \rho_{n,d_n}\topp1 + m_n^2\proba \pp{c_n^{-1 }V_{1,2}>\epsilon} \rho_{n,d_n}\topp2\notag\\
& \le    Cm_n^3  c_n^{-\alpha}  \frac{n^{1+\beta}}{w_n^3} +  C m_n^2 \frac{w_n^2}{m_n^2 n}       \frac{nd_n^{2\beta-1 } }{w_n^2} \le C  \frac{m_n n}{w_n^2\log n} + C d_n^{2\beta-1 }\rightarrow 0,
\end{align}
  since $m_n\ll { w_n^2\log n}/{n}$ from \eqref{eq:m_n ppc} and $\beta<1/2$.
\end{proof}
\subsection{Truncation}
We have seen that 
\[
\xi_n^*\weakto \xi
\]
as $n\to\infty$, following Propositions \ref{prop:5.2} and \ref{prop:5.3}. It remains to approximate $\xi_n$ and $\xi_n^*$. 

\begin{Prop}\label{prop:5.4}
For all $f\in \Clipb(S\times[0,1])$, under \eqref{eq:m_n ppc},
\[
\limn\proba\pp{\abs{\xi_n(f) - \xi_n^*(f)}>\eta} = 0, \mfa \eta>0.
\]
\end{Prop}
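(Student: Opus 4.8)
The plan is to interpose between $\xi_n$ and $\xi_n^*$ the point process built from the ``bare'' truncation of \eqref{eq:p>=1} that keeps only the pairs $i_1<i_2\le m_n$ but drops the factor $(\Gamma_{m_n+1}/m_n)^{2/\alpha}$ occurring in \eqref{eq:X star whole}, namely
\[
\wt X_{n,k}:=w_n^{2/\alpha}\sum_{1\le i_1<i_2\le m_n}\Gamma_{i_1}^{-1/\alpha}\Gamma_{i_2}^{-1/\alpha}\indd{k\in R_{n,i_1,i_2}},\qquad \wt{\vv X}_{d_n,j}:=\pp{\wt X_{n,(j-1)d_n+k}}_{k=1,\dots,d_n},
\]
so that coordinatewise $0\le \wt X_{n,k}\le X_{n,k}$ and $\vv X_{d_n,j}^*=G_n\wt{\vv X}_{d_n,j}$ with $G_n:=(\Gamma_{m_n+1}/m_n)^{2/\alpha}\to1$ a.s.\ (since $m_n\to\infty$). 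Since $f\in\Clipb(S\times[0,1])$, its support is bounded away from $[\vv 0]$, hence contained in $\{\nn{\vv z}_\infty\ge a_0\}\times[0,1]$ for some $a_0>0$; extending $f$ by $0$ to $\wt\ell_0\times[0,1]$ keeps it $\mathrm{Lip}(f)$-Lipschitz, and $\xi_n(f)-\xi_n^*(f)=\summ j1{k_n}\big(f(\vv X_{d_n,j}/b_n,j/k_n)-f(\vv X_{d_n,j}^*/b_n,j/k_n)\big)$. On $\{G_n\le2\}$ every nonzero summand has $\nn{\vv X_{d_n,j}}_\infty\ge a_0b_n/2$ (using $\vv X_{d_n,j}^*\le G_n\vv X_{d_n,j}$), and since the blocks are disjoint there are at most $N_n:=\#\{k\le n:X_{n,k}\ge a_0b_n/2\}$ of them, with $\esp N_n\to(a_0/2)^{-\alpha}$ by stationarity and \eqref{eq:X_0 tail}. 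By the Lipschitz property of $f$, the bound $\wt d\le\nn{\cdot}_\infty$, and $\vv X_{d_n,j}^*=G_n\wt{\vv X}_{d_n,j}$,
\[
\abs{f\pp{\tfrac{\vv X_{d_n,j}}{b_n},\tfrac j{k_n}}-f\pp{\tfrac{\vv X_{d_n,j}^*}{b_n},\tfrac j{k_n}}}\le\frac{\mathrm{Lip}(f)}{b_n}\pp{\max_{k\in\calI_{d_n,j}}(X_{n,k}-\wt X_{n,k})+\abs{G_n-1}\,\nn{\vv X_{d_n,j}}_\infty},
\]
so on $\{G_n\le2\}\cap\{N_n\le L_0\}$ one has $\abs{\xi_n(f)-\xi_n^*(f)}\le \mathrm{Lip}(f)\,L_0\pp{b_n^{-1}\max_{k\le n}(X_{n,k}-\wt X_{n,k})+\abs{G_n-1}\,b_n^{-1}\max_{k\le n}X_{n,k}}$. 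Since $\P(G_n>2)\to0$ and $\limsup_n\P(N_n>L_0)\le(a_0/2)^{-\alpha}/L_0\to0$ as $L_0\to\infty$, it then remains to show both bracketed quantities tend to $0$ in probability.

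The second is immediate, as $\abs{G_n-1}\to0$ a.s.\ while $b_n^{-1}\max_{k\le n}X_{n,k}$ is tight by a union bound, stationarity and \eqref{eq:X_0 tail}. The first is the crux. I would write $X_{n,k}-\wt X_{n,k}=w_n^{2/\alpha}\sum_{i_1<i_2,\,i_2>m_n}\Gamma_{i_1}^{-1/\alpha}\Gamma_{i_2}^{-1/\alpha}\indd{k\in R_{n,i_1,i_2}}$ and fix $K:=\lceil 2/\alpha\rceil+1$. For the range $i_1>K$ all the relevant moments are finite, and since for $i_1\ne i_2$ the indicators $\indd{k\in R_{n,i_1}}$ and $\indd{k\in R_{n,i_2}}$ are independent of each other and of $(\Gamma_i)_i$ with $\P(k\in R_n)=1/w_n$ for $k\le n$ (Lemma \ref{lem:Rm}), bounding $\max_{k\le n}$ by $\summ k1n$ gives
\[
\esp\bb{\max_{k\le n}w_n^{2/\alpha}\sum_{\substack{K<i_1<i_2\\ i_2>m_n}}\Gamma_{i_1}^{-1/\alpha}\Gamma_{i_2}^{-1/\alpha}\indd{k\in R_{n,i_1,i_2}}}\le n\,w_n^{2/\alpha-2}\sum_{\substack{K<i_1<i_2\\ i_2>m_n}}\esp\bb{\Gamma_{i_1}^{-1/\alpha}\Gamma_{i_2}^{-1/\alpha}},
\]
and a routine summation — using $\esp[\Gamma_i^{-s}]\asymp i^{-s}$ together with $\esp[\Gamma_{i_1}^{-1/\alpha}\Gamma_{i_2}^{-1/\alpha}]\le\esp[\Gamma_{i_1}^{-1/\alpha}]\,\esp[(\Gamma_{i_2}-\Gamma_{i_1})^{-1/\alpha}]$ when $i_2-i_1>1/\alpha$ (and $\le\esp[\Gamma_{i_1}^{-2/\alpha}]$ for the boundedly-many remaining pairs) — bounds the double sum by $C\,m_n^{1-1/\alpha}$. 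For the range $i_1\le K$ (finitely many jumps, of infinite expectation) I would work on the event $\{\max_{i\le K}\Gamma_i^{-1/\alpha}\le C\}$, whose complement has probability tending to $0$ as $C\to\infty$ uniformly in $n$, replace these jumps by the constant $C$, and run the identical first-moment estimate, again getting $C_K\,n\,w_n^{2/\alpha-2}m_n^{1-1/\alpha}$. Markov's inequality then yields $b_n^{-1}\max_{k\le n}(X_{n,k}-\wt X_{n,k})\to0$ in probability as soon as $n\,w_n^{2/\alpha-2}m_n^{1-1/\alpha}=o(b_n)$, and substituting $w_n\asymp n^{1-\beta}$ and $b_n\asymp(n\log n)^{1/\alpha}$ (with $\beta<1/2$, $\alpha<1$) shows that this is \emph{exactly equivalent} to the lower bound $\frac{w_n^2}{n\log^{1/(1-\alpha)}n}\ll m_n$ imposed in \eqref{eq:m_n ppc}.

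I expect the remainder estimate of the previous paragraph to be the only genuine difficulty. The largest Poisson weights $\Gamma_i^{-1/\alpha}$ have infinite mean when $\alpha<1$, so no naive first-moment bound is available and one must peel off finitely many of them by hand; and the resulting estimate is only barely tight enough to close — which is precisely why the (otherwise curious-looking) lower bound on $m_n$ appears in \eqref{eq:m_n ppc}, this being the sole place in the proof of Theorem \ref{thm:1} where it is used. Everything else — the tightness of $b_n^{-1}\max_{k\le n}X_{n,k}$ and of $N_n$, and the reduction via $G_n\to1$ — is routine.
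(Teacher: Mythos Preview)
Your argument is correct and follows the same overall architecture as the paper --- interpose the ``bare'' truncation $\wt X_{n,k}$, control the remainder $X_{n,k}-\wt X_{n,k}$ via a first-moment bound after peeling off the finitely many small-$i_1$ jumps, and handle $\wt X\leftrightarrow X^*$ via $\Gamma_{m_n+1}/m_n\to1$ --- but you make two technical choices that differ from the paper and are arguably more self-contained.

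First, to count the contributing blocks, the paper invokes the already-established weak convergence $\xi_n^*\Rightarrow\xi$ (Propositions \ref{prop:5.2} and \ref{prop:5.3}) to get $\proba\big(\xi_n^*(\{\|\vv x\|_\infty>\epsilon/2\}\times[0,1])>C\eta/\delta\big)\to\proba(\xi(\cdots)>C\eta/\delta)$, which vanishes as $\delta\downarrow0$. You bypass this by the cruder but direct bound $\#\{j:\|\vv X_{d_n,j}\|_\infty\ge a_0 b_n/2\}\le N_n$ together with $\esp N_n\to(a_0/2)^{-\alpha}$, which avoids any appeal to the previous propositions. Second, for the small-$i_1$ range the paper exploits the exponential law of $\Gamma_1$ and concavity via $\proba(\Gamma_1^{-1/\alpha}Z_n>\epsilon)\le C(\esp Z_n)^\alpha$, with $Z_n$ built from $\Gamma_i-\Gamma_1$ to force independence; your device of restricting to $\{\max_{i\le K}\Gamma_i^{-1/\alpha}\le C\}$ and then running the same first-moment estimate is less slick but equally effective, and reaches the identical threshold $m_n\gg w_n^2/(n\log^{1/(1-\alpha)}n)$. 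Your observation that this is the only place the lower bound in \eqref{eq:m_n ppc} is used matches the paper's Lemma \ref{Lem:remainder whole n}.
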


For this purpose we introduce another layer of approximation. Define
\begin{align}
\wt{X}_{n,k}  & :=  
 w_{n}^{2/\alpha}\sum_{1\le i_1<i_2\le m_n}   \frac{1}{\Gamma_{i_1}^{1/\alpha}\Gamma_{i_2}^{1/\alpha}}\inddd{k\in  R_{n,i_1,i_2}} \label{eq:X tilde}\\
 & =\pp{\frac{w_n} {\Gamma_{m_n+1}}}^{2/\alpha} 
 \sum_{1\le i_1<i_2\le m_n}  \pp{\frac{\Gamma_{m_n+1}^2}{\Gamma_{i_1}\Gamma_{i_2}}}^{1/\alpha}    \inddd{k\in R_{n,i_1,i_2}},\  k=1,\dots,n.\nonumber
 \end{align}
 Write
\[
\wt\vvX_{d_n,j}:=\pp{\wt X_{n,(j-1)d_n+1},\dots,\wt X_{n,jd_n}},
   \quad j=1,\ldots,k_n.
\]
and 
\[
\wt\xi_n:=\summ j1{k_n} \ddelta{\wt\vvX_{d_n,j}/b_n, \ j/k_n    } \inddd{R_{n,i_1,i_2}\cap \cal{I}_{d_n,j}\neq \emptyset  }.
\]
Fix $f\in\Clipb(S\times[0,1])$. Let $\epsilon>0$ be such that $f(\vv x, \cdot)\equiv 0$ if $\|\vv x\|_\infty<\epsilon$. To complete the proof, in view of \cite[Theorem 7.1.17]{kulik20heavy}, it suffices to show that 
for any $\eta>0$, 
\[
\limn\proba\pp{ |\xi_n(f)-\wt{\xi}_n(f)|>\eta}=0,
\]
and
\[
\limn\proba\pp{ |\wt{\xi}_n(f)- \xi^*_n(f)|>\eta}=0.
\]
We use arguments similar to the proof of \cite[Theorem 7.4.3]{kulik20heavy}. The key is the following lemma.

\begin{Lem}
We have, for all $\delta\in(0,\epsilon/2)$,
\begin{multline}\label{eq:bridge1}
 \proba\pp{ |\xi_n(f)-\wt{\xi}_n(f)|>\eta} \\
\le    \proba\pp{ \max_{k=1,\ldots,n} |X_{n,k}-\wt{X}_{n,k}| >b_n \delta }+\proba\pp{  \wt{\xi}_n \left(  \ccbb{\|\vv x\|_\infty>\frac\epsilon2}  \times [0,1] \right)>\frac{C\eta}{\delta} },
\end{multline}
and
\begin{multline}\label{eq:bridge2}
 \proba\pp{ |\wt\xi_n(f) -\xi^*_n(f)|>\eta} \\
\le    \proba\pp{ \max_{k=1,\ldots,n} |\wt X_{n,k}-X^*_{n,k}| >b_n \delta }+\proba\pp{  \xi^*_n \left(  \ccbb{\|\vv x\|_\infty>\frac\epsilon2}  \times [0,1] \right)>\frac{C\eta}{\delta} },
\end{multline}
where the constant $C>0$ depends only on $f$.
\end{Lem}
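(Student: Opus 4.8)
The statement is a standard ``truncation lemma'' that bounds the probability of a discrepancy between two point-process functionals in terms of (a) a uniform sup-norm discrepancy between the underlying triangular arrays and (b) the mass that the approximating point process puts on the relevant bounded set. The plan is to follow the argument of \cite[Theorem 7.4.3]{kulik20heavy} line by line, adapted to our block structure. I will only prove \eqref{eq:bridge1}; the proof of \eqref{eq:bridge2} is verbatim the same with $X_{n,k}$ replaced by $\wt X_{n,k}$, $\wt X_{n,k}$ by $X^*_{n,k}$, $\xi_n$ by $\wt\xi_n$, and $\wt\xi_n$ by $\xi^*_n$.

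\textbf{Step 1: decompose over the discrepancy event.} Fix $\eta>0$ and $\delta\in(0,\epsilon/2)$, and let $A_{n,\delta}:=\{\max_{k=1,\dots,n}|X_{n,k}-\wt X_{n,k}|>b_n\delta\}$. Then
\[
\proba\pp{|\xi_n(f)-\wt\xi_n(f)|>\eta}\le \proba(A_{n,\delta}) + \proba\pp{|\xi_n(f)-\wt\xi_n(f)|>\eta,\ A_{n,\delta}^c}.
\]
On $A_{n,\delta}^c$ we have $\|\vv X_{d_n,j}/b_n - \wt{\vv X}_{d_n,j}/b_n\|_\infty\le\delta$ for every block $j=1,\dots,k_n$. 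So it remains to bound the second term on $A_{n,\delta}^c$ by the second term on the right-hand side of \eqref{eq:bridge1}.

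\textbf{Step 2: control the functional difference block by block.} Using that $f$ is bounded and Lipschitz with $f(\vv x,\cdot)\equiv 0$ for $\|\vv x\|_\infty<\epsilon$, write
\[
|\xi_n(f)-\wt\xi_n(f)|\le\summ j1{k_n}\abs{f\pp{\frac{\vv X_{d_n,j}}{b_n},\frac j{k_n}}-f\pp{\frac{\wt{\vv X}_{d_n,j}}{b_n},\frac j{k_n}}}.
\]
For each $j$, the $j$-th term is nonzero only if $\|\vv X_{d_n,j}/b_n\|_\infty>\epsilon$ or $\|\wt{\vv X}_{d_n,j}/b_n\|_\infty>\epsilon$; on $A_{n,\delta}^c$, since $\delta<\epsilon/2$, both of these force $\|\wt{\vv X}_{d_n,j}/b_n\|_\infty>\epsilon/2$. (One must be mildly careful here because the metric on $\wt\ell_0$ is shift-invariant, but $\|\cdot\|_\infty$ is shift invariant too, so comparing $\|\vv x\|_\infty$ for representatives is well-defined.) On that same event, by the Lipschitz bound (with Lipschitz constant $L_f$, and using that the metric on $\wt\ell_0$ is dominated by the sup-norm distance of representatives) each nonzero term is at most $L_f\delta$. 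Hence on $A_{n,\delta}^c$,
\[
|\xi_n(f)-\wt\xi_n(f)|\le L_f\,\delta\,\#\{j:\ \|\wt{\vv X}_{d_n,j}/b_n\|_\infty>\epsilon/2\}= L_f\,\delta\,\wt\xi_n\pp{\ccbb{\|\vv x\|_\infty>\tfrac\epsilon2}\times[0,1]}.
\]
Therefore, on $A_{n,\delta}^c$, the event $\{|\xi_n(f)-\wt\xi_n(f)|>\eta\}$ is contained in $\{\wt\xi_n(\{\|\vv x\|_\infty>\epsilon/2\}\times[0,1])>\eta/(L_f\delta)\}$, which after setting $C=1/L_f$ (a constant depending only on $f$) gives exactly \eqref{eq:bridge1}. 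The proof of \eqref{eq:bridge2} is identical.

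\textbf{Main obstacle.} The substantive work is not in this lemma but in what comes after it: showing the first term on the right of \eqref{eq:bridge1}--\eqref{eq:bridge2} vanishes requires a careful tail estimate on $\max_k|X_{n,k}-\wt X_{n,k}|$, i.e.\ that the small-jump part of the aggregation (the Poisson atoms beyond index $m_n$, and the difference between $\Gamma$-normalization and $\Gamma_{m_n+1}$-normalization) is uniformly negligible at scale $b_n$; this is where the constraint \eqref{eq:m_n ppc} is used, and it is the genuine analytic bottleneck. The second term is handled by noting $\wt\xi_n$ (resp.\ $\xi^*_n$) is asymptotically close to $\xi$ and that $\xi(\{\|\vv x\|_\infty>\epsilon/2\}\times[0,1])$ is a.s.\ finite, so choosing $\delta$ small after $n\to\infty$ makes the threshold $C\eta/\delta$ large; this is routine given Propositions \ref{prop:5.2} and \ref{prop:5.3}. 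Within the present lemma itself, the only point requiring care is the bookkeeping with the shift-invariant metric on $\wt\ell_0$, ensuring that ``$\|\vv x\|_\infty$'' and the Lipschitz estimate descend correctly to equivalence classes.
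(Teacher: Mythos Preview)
Your proof is correct and follows essentially the same route as the paper: the same split over the event $A_{n,\delta}=\{\max_k|X_{n,k}-\wt X_{n,k}|>b_n\delta\}$, the same observation that on $A_{n,\delta}^c$ any block contributing a nonzero summand must satisfy $\|\wt{\vv X}_{d_n,j}/b_n\|_\infty>\epsilon/2$, and the same Lipschitz bound yielding $|\xi_n(f)-\wt\xi_n(f)|\le L_f\delta\,\wt\xi_n(\{\|\vv x\|_\infty>\epsilon/2\}\times[0,1])$. Your added remarks on the shift-invariant metric and on where the real work lies (the subsequent tail estimates under \eqref{eq:m_n ppc}) are accurate but extraneous to the lemma itself.
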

\begin{proof}
We only show \eqref{eq:bridge1}, and a similar argument yields \eqref{eq:bridge2}. 
Write, for all $\eta,\delta>0$, \begin{multline}\label{eq:bridge1'}
 \proba\pp{ |\xi_n(f)-\wt{\xi}_n(f)|>\eta} \\
\le   \proba\pp{ \max_{k=1,\ldots,n} |X_{n,k}-\wt{X}_{n,k}| >b_n \delta }+\proba\pp{  |\xi_n(f)-\wt{\xi}_n(f)|>\eta,  \max_{k=1,\ldots,n} |X_{n,k}-\wt{X}_{n,k}| \le b_n \delta }. 
\end{multline}
We examine the second probability on the right-hand side above.
 Take any $\delta\in (0,\epsilon/2)$. 
 Note that  when  $\max_{k=1,\ldots,n} |X_{n,k}-\wt{X}_{n,k}| \le b_n \delta$, if for some $j=1,\dots,k_n$,  $\|\wt{\vvX}_{d_n,j}/b_n\|_\infty\le \epsilon/2$, then necessarily we have $\| \vvX_{d_n,j}/b_n\|_\infty<\delta+\epsilon/2<\epsilon$  and hence $f( \wt\vvX_{d_n,j}/b_n, j/k_n) = f( \vvX_{d_n,j}/b_n, j/k_n)=0$. That is, when restricting to $\{\max_{k=1,\ldots,n} |X_{n,k}-\wt{X}_{n,k}| \le b_n \delta\}$ and comparing the difference, we have,
 \begin{align*}
 \abs{\xi_n(f)-\wt\xi_n(f)}& \le \summ j1{k_n}\abs{f\pp{\frac{\vvX_{d_n,j}}{b_n},\frac j{k_n}} - f\pp{\frac{\wt\vvX_{d_n,j}}{b_n},\frac j{k_n}}}\\
 & \le \summ j1{k_n}C\frac{\snn{\vvX_{d_n,j}-\wt\vvX_{d_n,j}}_\infty}{b_n}\inddd{\nn{\wt\vvX_{d_n,j}}_\infty/b_n>\epsilon/2}\\
 & \le \summ j1{k_n}C\delta\inddd{\nn{\wt\vvX_{d_n,j}}_\infty/b_n>\epsilon/2} = C\delta \wt\xi_n(\{\nn\vvx_\infty>\epsilon/2\}\times[0,1]),
 \end{align*}
 where in the second inequality above we used the Lipschitz property. Plugging the above into \eqref{eq:bridge1'},  
we have proved \eqref{eq:bridge1}. 
\end{proof}

To complete the proof of Proposition \ref{prop:5.4}, it suffices to examine \eqref{eq:bridge1} and \eqref{eq:bridge2} and show that the upper bounds tend to 0. We start with \eqref{eq:bridge2}. 
Write 
\[
\vv X^*_{n} = (X^*_{n,1},\dots,X^*_{n,n})  \qmand \wt {\vv X}_{n}=(\wt X_{n,1},\dots,\wt X_{n,n}).
\]
\begin{Lem}\label{lem:2'}
For all $\epsilon>0$, under \eqref{eq:m_n ppc},
\[
\limn\proba\left( \frac1{b_n} \|\vvX^*_{n}   - \wt\vvX_{n}   \|_\infty>\epsilon  \right) = 0.
\]
\end{Lem}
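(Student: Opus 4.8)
\textbf{Proof proposal for Lemma \ref{lem:2'}.}

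The plan is to observe that $\wt X_{n,k}$ and $X^*_{n,k}$ are built from one and the same random sum, differing only by a common multiplicative random factor that concentrates around $1$. Indeed, comparing the second line of \eqref{eq:X tilde} with \eqref{eq:X star whole}, the sum $\sum_{1\le i_1<i_2\le m_n}V^*_{i_1,i_2}\inddd{k\in R_{n,i_1,i_2}}$ is common to both, so that $\wt X_{n,k}=(m_n/\Gamma_{m_n+1})^{2/\alpha}X^*_{n,k}$ for $k=1,\dots,n$. Moreover, writing $X^*_{n,k}=(\Gamma_{m_n+1}/m_n)^{2/\alpha}\,w_n^{2/\alpha}\sum_{1\le i_1<i_2\le m_n}\Gamma_{i_1}^{-1/\alpha}\Gamma_{i_2}^{-1/\alpha}\inddd{k\in R_{n,i_1,i_2}}$ and comparing with the representation \eqref{eq:p>=1} of $X_{n,k}$, which sums the \emph{same} non-negative terms over the larger index set $\sccbb{0<i_1<i_2<\infty}$, one gets the pointwise domination $X^*_{n,k}\le (\Gamma_{m_n+1}/m_n)^{2/\alpha}X_{n,k}$. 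Combining these two facts, taking the maximum over $k$, and simplifying $\abs{(m_n/\Gamma_{m_n+1})^{2/\alpha}-1}\cdot(\Gamma_{m_n+1}/m_n)^{2/\alpha}=\abs{(\Gamma_{m_n+1}/m_n)^{2/\alpha}-1}$, one obtains
\[
\frac1{b_n}\snn{\vvX^*_n-\wt\vvX_n}_\infty\le\abs{\pp{\frac{\Gamma_{m_n+1}}{m_n}}^{2/\alpha}-1}\cdot\frac1{b_n}\max_{k=1,\dots,n}X_{n,k}.
\]
It then suffices to show the first factor on the right tends to $0$ in probability and the second is tight.

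For the first factor: $\Gamma_{m_n+1}$ is a sum of $m_n+1$ i.i.d.\ standard exponentials and $m_n\to\infty$ under \eqref{eq:m_n ppc} (since $w_n\sim C n^{1-\beta}$ and $\beta<1/2$ force $w_n^2/(n\log^{1/(1-\alpha)}n)\sim Cn^{1-2\beta}/\log^{1/(1-\alpha)}n\to\infty$). Hence $\Gamma_{m_n+1}/m_n\to1$ a.s., and since $\var(\Gamma_{m_n+1}/m_n)=(m_n+1)/m_n^2$, Chebyshev's inequality together with the local Lipschitz behaviour of $r\mapsto r^{2/\alpha}$ near $r=1$ yields $\proba\pp{\abs{(\Gamma_{m_n+1}/m_n)^{2/\alpha}-1}>m_n^{-1/4}}\to0$.

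For the second factor: by \eqref{eq:p>=1}, $\max_{k=1,\dots,n}X_{n,k}\eqd\max_{k=1,\dots,n}X_k$, so a union bound gives, for all $1\le y\le n$ and all large $n$,
\[
\proba\pp{\frac1{b_n}\max_{k=1,\dots,n}X_k>y}\le n\,\proba(X_0>b_ny)\le C y^{-\alpha},
\]
using $\proba(X_0>x)\sim\alpha x^{-\alpha}\log(x)/2$, $b_n^\alpha=\tfrac12 n\log n$, and $\log(b_ny)\le C\log n$ for $y\le n$. Combining the two parts with the threshold $y_n:=m_n^{1/8}$, which satisfies $1\le y_n\le n$ for large $n$ by \eqref{eq:m_n ppc}, for any $\epsilon>0$ one bounds $\proba(b_n^{-1}\snn{\vvX^*_n-\wt\vvX_n}_\infty>\epsilon)$ by
\[
\proba\pp{\abs{\pp{\frac{\Gamma_{m_n+1}}{m_n}}^{2/\alpha}-1}>\epsilon\,m_n^{-1/8}}+\proba\pp{\frac1{b_n}\max_{k=1,\dots,n}X_{n,k}>m_n^{1/8}},
\]
and both terms vanish: the first because $\epsilon\,m_n^{-1/8}\ge m_n^{-1/4}$ for $n$ large, the second because it is at most $C m_n^{-\alpha/8}$. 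This proves the lemma. (One may alternatively invoke the macroscopic limit \eqref{eq:macro}, established in \citep{bai24phase}, to get tightness of $b_n^{-1}\max_{k\le n}X_k$ directly.)

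The argument is essentially soft; the main point is spotting the multiplicative-factor structure that reduces $\wt\vvX_n$ to a rescaling of $\vvX^*_n$ and $\vvX^*_n$ to a rescaled sub-sum of $\vvX_n$, after which everything reduces to the strong law of large numbers and the crude tail bound for $\max_{k\le n}X_k$. The only mildly delicate point is that this tail bound must hold uniformly over a polynomially growing range of thresholds $y$ (here $y\le n$), which is why the estimate is tracked in $y$ rather than for fixed $y$.
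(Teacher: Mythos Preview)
Your proof is correct and takes a genuinely different route from the paper's. Both proofs start from the same multiplicative identity $\wt\vvX_n=(m_n/\Gamma_{m_n+1})^{2/\alpha}\vvX^*_n$, but diverge in how they control the large factor. The paper bounds $\snn{\vvX^*_n-\wt\vvX_n}_\infty/b_n$ by $|(m_n/\Gamma_{m_n+1})^{2/\alpha}-1|\cdot\snn{\vvX^*_n}_\infty/b_n$ and then invokes the already-established convergence $\xi_n^*\Rightarrow\xi$ (Propositions~\ref{prop:5.2} and~\ref{prop:5.3}) to get tightness of $\snn{\vvX^*_n}_\infty/b_n$; this is legitimate in the paper's logical order but makes the lemma depend on the heavier de-aggregation and Poisson-approximation machinery. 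You instead dominate $X^*_{n,k}$ by $(\Gamma_{m_n+1}/m_n)^{2/\alpha}X_{n,k}$ via the sub-sum observation, which after the neat cancellation yields the bound with $\max_k X_{n,k}/b_n$ in place of $\snn{\vvX^*_n}_\infty/b_n$; tightness of the latter then follows from the elementary union bound and the marginal tail asymptotic of $X_0$. Your argument is thus more self-contained and uses only \eqref{eq:p>=1}, the marginal tail, and the law of large numbers for $\Gamma_{m_n+1}$, at the modest cost of tracking the union bound uniformly in the threshold $y\in[1,n]$.
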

\begin{proof}
Notice that we have
\[
\wt\vvX_{n} = \pp{\frac{m_n}{\Gamma_{m_n+1}}}^{2/\alpha} \vvX^*_{n},
\]
and therefore
 \begin{align*}  
 \proba\left( \frac1{b_n} \|\vvX^*_{n}   - \wt{\vv X}_{n}   \|_\infty>\epsilon  \right)& =  \proba\pp{\abs{\left(\frac{m_n}{\Gamma_{m_n+1}}\right)^{2/\alpha} - 1}\times \frac1{b_n}  \|\vvX^*_{n}\|_\infty >\epsilon } \\
& \le  \proba\pp{\abs{\left(\frac{m_n}{\Gamma_{m_n+1}}\right)^{2/\alpha} - 1}>\epsilon' }+\proba\pp{\frac {\epsilon'}{b_n} \|\vvX^*_{n}    \|_\infty >\epsilon },
\end{align*}
for all $\epsilon'>0$.
For the second term above, {by Propositions \ref{prop:5.2} and \ref{prop:5.3}, $\xi_n^*\weakto\xi$ and as a consequence} we have
\[ 
\proba\pp{\frac {\epsilon'}{b_n} \|\vvX^*_{n}    \|_\infty >\epsilon }\sim C (\epsilon/\epsilon')^{-\alpha},
\]
and also $(\epsilon/\epsilon')^{-\alpha}\to0$ as $\epsilon'\downarrow 0$.  For the first term above, we have
\begin{align*}
 \proba &\pp{\abs{\left(\frac{m_n}{\Gamma_{m_n+1}}\right)^{2/\alpha} - 1}>\epsilon' }\\
 &=\proba\pp{ \frac{m_{n}+1}{\Gamma_{m_n+1}}  > \frac{m_{n}+1}{m_n} (1+\epsilon')^{\alpha/2} } + 
 \proba\pp{ \frac{m_{n}+1}{\Gamma_{m_n+1}}  < \frac{m_{n}+1}{m_n} (1-\epsilon')^{\alpha/2} } \\
 &\le \proba\pp{ \abs{\frac{\Gamma_{m_n+1}}{m_{n}+1} -1 }>\gamma },
\end{align*}
for some $\gamma>0$ small enough (depending on $\epsilon'$). 
Note that $\Gamma_{m_n+1}$ is a   sum of $m_n+1$  i.i.d.~exponential variables, and hence
\begin{equation}\label{eq:gamma concentration}
\proba\pp{\abs{\frac{\Gamma_{m_{n}+1 }}{m_n+1}- 1}>\gamma} \le \frac{\esp(\Gamma_{m_n+1}-\esp\Gamma_{m_n +1})^{2q}}{(m_{n}+1)^{2q}\gamma^{2q}}  \le  \frac {C_q}{m_n ^q\gamma^{2q}}, \mfa q\in\N.
\end{equation}
Therefore, by taking $q$ large enough, the above decays faster than any polynomial rates (we shall only need $o(1)$ here, but $o(k_n\inv)$ later). 
The desired result now follows.\end{proof}

For the second term on the right-hand side of \eqref{eq:bridge2}
 since we have proved $\xi_n^*\weakto\xi$ as $n\rightarrow\infty$ in Propositions \ref{prop:5.2} and \ref{prop:5.3} (again under \eqref{eq:m_n ppc})
\[
\limn\proba\pp{ \xi^*_n \left(  \ccbb{\|\vv x\|_\infty>\frac\epsilon2}  \times [0,1] \right)>\frac{C\eta}{\delta} } = \proba\pp{   \xi  \left(  \ccbb{\|\vv x\|_\infty>\frac\epsilon2}  \times [0,1] \right)>\frac{C\eta}{\delta} }.
\]
Because $\xi$ is almost surely finite on $\{\|\vv x\|_\infty>\epsilon/2\}  \times [0,1]$, the right-hand side above tends to $0$  by letting $\delta\downarrow 0$. This completes the proof of $\limn\proba\spp{ |\wt\xi_n(f)-\xi^*_n(f)|>\eta}=0$. 

Next, we examine \eqref{eq:bridge1}. For the second term, it vanishes following the same argument as above using $\wt\xi_n\weakto\xi$. For the first term we prove it in the following lemma. This completes the proof of $\limn\proba\spp{ |\wt\xi_n(f)-{\xi}_n(f)|>\eta}=0$, and then the proof of Proposition \ref{prop:5.4}.

\begin{Lem}\label{Lem:remainder whole n}
Under the assumption
$m_n\gg  w_{n}^2/(n\log^{1/(1-\alpha)}n)$, we have for any $\delta>0$ that
\[
\limn \proba\pp{ \max_{k=1,\ldots,n} |X_{n,k}-\wt{X}_{n,k}| >b_n \delta }=0
\]
\end{Lem}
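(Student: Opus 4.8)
The plan is to bound the maximal difference $\max_{k=1,\dots,n}|X_{n,k}-\wt X_{n,k}|$ by a union bound over $k$, after recognizing that $X_{n,k}-\wt X_{n,k}$ is precisely the tail contribution of the series in \eqref{eq:p>=1} coming from pairs $(i_1,i_2)$ with $i_2>m_n$. Explicitly, from \eqref{eq:p>=1} and \eqref{eq:X tilde},
\[
X_{n,k}-\wt X_{n,k} = w_n^{2/\alpha}\sum_{\substack{0<i_1<i_2\\ i_2>m_n}}\frac1{\Gamma_{i_1}^{1/\alpha}\Gamma_{i_2}^{1/\alpha}}\inddd{k\in R_{n,i_1,i_2}}\ge 0,
\]
so the absolute value is harmless. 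First I would condition on the Poisson arrival times $\{\Gamma_i\}_{i\in\N}$ and on the independent uniform-type randomness, writing $X_{n,k}-\wt X_{n,k}$ as a weighted sum of Bernoulli indicators $\inddd{k\in R_{n,i_1,i_2}}$ whose (conditional) means are $w_n^{-2}$ up to the Markov/renewal corrections of Lemma \ref{lem:Rm}; note $\proba(k\in R_{n,i_1,i_2})\le w_n^{-2}$ for $k\le n$.

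The core estimate is a union bound:
\[
\proba\pp{\max_{k=1,\dots,n}(X_{n,k}-\wt X_{n,k})>b_n\delta}\le \summ k1n \proba\pp{X_{n,k}-\wt X_{n,k}>b_n\delta}
\le n\,\proba\pp{X_{n,1}-\wt X_{n,1}>b_n\delta},
\]
using the shift invariance of $R_n$ (Lemma \ref{lem:Rm}(i)) to reduce to $k=1$. It then suffices to show $n\,\proba(X_{n,1}-\wt X_{n,1}>b_n\delta)\to 0$. For this I would analyze the single coordinate $X_{n,1}-\wt X_{n,1}$, which is a Poisson-type sum with heavy-tailed weights $(\Gamma_{i_1}\Gamma_{i_2})^{-1/\alpha}$ restricted to $i_2>m_n$, multiplied by the small-probability Bernoulli factors. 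The heuristic is that $\esp(X_{n,1}-\wt X_{n,1})$ and its higher truncated moments are governed by $\sum_{i_2>m_n} i_2^{-1/\alpha}\cdot(\text{lower-order sum in }i_1)$ times $w_n^{2/\alpha}\cdot w_n^{-2}$, and the tail at level $b_n\delta = (\tfrac12 n\log n)^{1/\alpha}\delta$ is roughly of order $n^{-1}$ times a factor that vanishes precisely when $m_n\gg w_n^2/(n\log^{1/(1-\alpha)}n)$. Concretely I expect to split into the event that at least one summand with $i_2>m_n$ is large (handled by a first-moment/Poisson-tail bound on $\max_{i_2>m_n}\Gamma_{i_2}^{-1/\alpha}\inddd{1\in R_{n,i_1,i_2}}$, whose size is controlled since $\Gamma_{m_n}\asymp m_n$ and the Bernoulli has mean $\le w_n^{-2}$) and the event that the truncated sum of small summands exceeds $b_n\delta$ (handled by a Markov/Chernoff bound, using that $\alpha\in(0,1)$ so the truncated $\alpha$-stable-type sum has all moments after truncation). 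The lower bound $m_n\gg w_n^2/(n\log^{1/(1-\alpha)}n)$ enters exactly in making the number of pairs with a ``large'' weight summand, namely of order $n\cdot(\text{\# pairs})\cdot w_n^{-2}\cdot\proba(\text{weight}>b_n\delta)$, tend to zero — the exponent $1/(1-\alpha)$ on $\log n$ is the signature of the Chernoff bound for sums of $\alpha^{-1}$-power-weighted exponentials.

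The main obstacle will be the careful bookkeeping of the double sum over $(i_1,i_2)$ with $i_2>m_n$: one must keep $i_1$ unconstrained (it ranges over $1,\dots,i_2-1$, including $i_1\le m_n$), so the weights $\Gamma_{i_1}^{-1/\alpha}$ can themselves be large, and the pair-Bernoulli $\inddd{1\in R_{n,i_1,i_2}}$ couples the two indices only through the common renewal set. I would handle this by first summing out $i_1$ conditionally — for fixed $i_2$, $\sum_{i_1<i_2}\Gamma_{i_1}^{-1/\alpha}\inddd{1\in R_{n,i_1}\cap R_{n,i_2}}$ is, given $1\in R_{n,i_2}$ (probability $\le w_n^{-2}$ after accounting for $R_{n,i_2}$'s own contribution), a sum over $i_1$ of $\Gamma_{i_1}^{-1/\alpha}$ times an independent $w_n^{-1}$-Bernoulli, which is an $\alpha$-stable-type variable of scale $\asymp w_n^{-1/\alpha}$ restricted to $\{1\in R_{n,i_2}\}$ — then reintroduce $w_n^{2/\alpha}$ and sum over $i_2>m_n$ against $\Gamma_{i_2}^{-1/\alpha}\asymp i_2^{-1/\alpha}$. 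This mirrors (and presumably the authors will cite) the proof of \citep[Lemma 5.2]{bai23tail} or an analogous remainder estimate; the only genuinely new point is tracking constants so that the precise threshold $w_n^2/(n\log^{1/(1-\alpha)}n)$ appears, and confirming that, since we only need $o(1)$ here (the stronger $o(k_n^{-1})$ being needed in Section \ref{sec:single}), the crude union bound over $k=1,\dots,n$ is not wasteful.
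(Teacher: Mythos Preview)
Your proposal contains a genuine gap in the treatment of the terms with small $i_1$. The union bound over $k$, applied \emph{first}, is too crude for the piece of $X_{n,k}-\wt X_{n,k}$ coming from $i_1\le m$ (a fixed integer with $m>2/\alpha$), because $\esp\Gamma_{i_1}^{-1/\alpha}=\infty$ for such $i_1$ and the only way to extract a finite bound is via $\proba(\Gamma_1<x)\le x$ together with Jensen, $\esp Z^\alpha\le(\esp Z)^\alpha$. If you first pass to a single $k$ and then apply this trick, you obtain
\[
n\,\proba\!\pp{\frac{w_n^{2/\alpha}}{\Gamma_1^{1/\alpha}}\sum_{i_2>m_n}\frac{\inddd{1\in R_{n,1,i_2}}}{(\Gamma_{i_2}-\Gamma_1)^{1/\alpha}}>b_n\delta}
\;\le\; C\,\frac{n}{w_n}\cdot\frac{w_n^2}{b_n^\alpha}\cdot\bigl(\esp Z'\bigr)^\alpha,
\qquad \esp Z'\le \frac{C\,m_n^{1-1/\alpha}}{w_n},
\]
which simplifies to $C\,w_n^{1-\alpha}m_n^{\alpha-1}/\log n$ and hence tends to zero only if $m_n\gg w_n/\log^{1/(1-\alpha)}n$. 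This threshold is \emph{strictly stronger} than the lemma's assumption $m_n\gg w_n^2/(n\log^{1/(1-\alpha)}n)$ by a factor $n/w_n\sim n^\beta$; worse, it is incompatible with the upper constraint $m_n\ll w_n^2\log n/n$ in \eqref{eq:m_n ppc}, so your argument would leave no admissible $m_n$ for Theorem~\ref{thm:1}.

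The paper avoids this by splitting the tail into three ranges of $(i_1,i_2)$: the pieces $A_{n,2}$ ($m<i_1\le m_n<i_2$) and $A_{n,3}$ ($m_n<i_1<i_2$) are handled by a first-moment bound after $\max_k\le\sum_k$ --- here your union bound and the paper's are equivalent and both give the stated threshold. The delicate piece $A_{n,1}$ ($i_1\le m$, $i_2>m_n$) is handled differently: the $\max_k$ is kept \emph{inside} the variable $Z_n$ before invoking $\proba(\Gamma_1^{-1/\alpha}Z_n>\epsilon)\le C(\esp Z_n)^\alpha$. This places the factor $n$ (from $\sum_k$) under the $\alpha$-th power, yielding $n^\alpha$ rather than $n$, which is exactly the saving of $n^{1-\alpha}$ needed to recover the correct threshold. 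Your claim that ``the crude union bound over $k=1,\dots,n$ is not wasteful'' is therefore incorrect for this piece; the order of operations matters.
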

\begin{proof}First, choose a fixed number $m$ so that $m>2/\alpha$. Assume without loss of generality that $m_n>2/\alpha$.   Introduce
\begin{align*}
A_{n,1}  & := \frac{w_{n}^{2/\alpha}}{b_n}\max_{k=1,\ldots,n}\sum_{1\le i_1 \le m,  m_n<i_2}   \frac{1}{\Gamma_{i_1}^{1/\alpha}\Gamma_{i_2}^{1/\alpha}}\inddd{k\in   R_{n,i_1,i_2}},
\\
A_{n,2}  & := \frac{w_{n}^{2/\alpha}}{b_n}\max_{k=1,\ldots,n}\sum_{m< i_1 \le m_n<i_2}   \frac{1}{\Gamma_{i_1}^{1/\alpha}\Gamma_{i_2}^{1/\alpha}}\inddd{k\in   R_{n,i_1,i_2}},
\\
A_{n,3} &:=  \frac{w_{n}^{2/\alpha}}{b_n} \max_{k=1,\ldots,n}\sum_{  m_n<i_1<i_2 }   \frac{1}{\Gamma_{i_1}^{1/\alpha}\Gamma_{i_2}^{1/\alpha}}\inddd{k\in   R_{n,i_1,i_2}}. 
\end{align*}
It suffices to show 
\[
A_{n,i}\to 0 \mbox{ in probability, } i=1,2,3,
\]
as $n\to\infty$. 
We first deal with $A_{n,2}$. By bounding the maximum of non-negative numbers with their sum and an inequality     $\esp\spp{\Gamma_{i_1}^{-1/\alpha} \Gamma_{i_2}^{-1/\alpha}}\le C i_1^{-1/\alpha}i_2^{-1/\alpha}$ which holds for large enough $i_1,i_2$ \citep[the inequality (3.2)]{samorodnitsky89asymptotic}, we have
\begin{align}
\esp A_{n,2}  & \le  \frac{w_{n}^{2/\alpha}}{b_n } \esp \pp{\sum_{k=1}^{n}  \sum_{m \le i_1\le m_n<i_2} \frac{1}{\Gamma_{i_1}^{1/\alpha}\Gamma_{i_2}^{1/\alpha}}  \inddd{k\in R_{n,i_1,i_2}}}  \nonumber\\
&=\frac{w_{n}^{2/\alpha}}{b_n }  \sum_{m \le i_1\le m_n<i_2}  \esp\pp{\Gamma_{i_1}^{-1/\alpha} \Gamma_{i_2}^{-1/\alpha}}   \pp{\sum_{k=1}^{n} \proba\pp{k\in R_{n,i_1,i_2}}}
\nonumber \\
&\le C \frac{nw_{n}^{2/\alpha-2}}{b_n}  \sum_{m \le i_1\le m_n<i_2}   i_1^{-1/\alpha}  i_2^{-1/\alpha}   
\le  C \frac  n{b_n}w_{n}^{2/\alpha-2} m_n^{1-1/\alpha}  \rightarrow 0,\label{eq:A_n,2}
\end{align}
where we have used the relation $m_n\gg w_{n}^2/(n \log^{1/( 1-\alpha)}n)$.
For $A_{n,3}$, similarly we have
\equh\label{eq:A_n,3}
\esp A_{n,3} \le C \frac n{b_n} w_{n}^{2/\alpha-2}  \sum_{m_n<  i_1<i_2}   i_1^{-1/\alpha}  i_2^{-1/\alpha} \le C  \frac n {b_n} w_{n}^{2/\alpha-2} m_n^{2-2/\alpha}  \rightarrow 0,
\eque
since the last bound above is of smaller order than that for $\esp A_{n,2}$.

For $A_{n,1}$, since the index $i_1$ takes only finitely many values, and  
  by the monotonicity of  $ \Gamma_i $ in $i$, it is enough to show 
\equh\label{eq:A_n,1}
\P\left( \Gamma_1^{-1/\alpha} Z_n>\epsilon \right)=\P\left( \Gamma_1<Z_n^\alpha \epsilon^{-\alpha} \right)\le
\frac{\esp(Z_n^\alpha)}{\epsilon^\alpha}\le C(\esp Z_n)^\alpha \rightarrow 0
\eque
as $n\rightarrow\infty$,
where
\[
Z_n=\frac{w_{n}^{2/\alpha}}{b_n}\max_{k=1,\ldots,n}  \sum_{ i>m_n} \frac{ 1}{(\Gamma_{i}-\Gamma_1)^{1/\alpha}}  \inddd{k\in R_{n,1,i}}  
\]
is independent of $\Gamma_1$.
Bounding max of non-negative terms by sum, we have
\[
\esp Z_n  \le \frac{w_{n}^{2/\alpha}}{b_n}\sum_{k=1}^{n} \esp\left(\sum_{ i>m_n} \frac{ 1}{(\Gamma_{i}-\Gamma_1)^{1/\alpha}}  \inddd{k\in R_{n,1,i}}\right)\le C \frac n{b_n} w_{n}^{2/\alpha-2} m_n^{1-1/\alpha}.
\]
The last bound above is of the same order as the bound above for $\esp  A_{n,2} $. This completes the proof.
\end{proof}
\section{Convergence of clusters}\label{sec:single}

\subsection{Main result and overview of the proof}\label{sec:overview 2}
We have introduced the space $S = \wt\ell_0\setminus\{[\vv0]\}$. To address the convergence of a single cluster, we use the formulation of convergence of \emph{clusters} 
in \cite[Section 6.2]{kulik20heavy}. In particular, we recall the boundedness $\wt\calB_0$ of $S$, the space of $\wt\calB_0$-boundedly finites on $S$, and the vague$^\#$ convergence in Appendix \ref{sec:topo}. 
Recall 
\[
\vv X_{d_n}:= \vv X_{d_n,1} = (X_1,\dots,X_{d_n}),
\]
and we also view it as an element in $\wt\ell_0$. We introduce   on $S$  the following   measures belonging to    $\cal{M}_{\wt{\cal{B}}_0}$:
\[
{\vv\nu}_{d_n}\equiv {\vv\nu}_{d_n,b_n}  :=  { k_n \esp\pp{\delta_{b_n^{-1} \vv{X}_{d_n}};\vv X_{d_n}\ne\vv0}} = k_n \esp\pp{\delta_{b_n^{-1} \vv{X}_{d_n}}},\quad n\in \N,
\]
  where the last equality above holds since $X_1>0$ with probability $1$ {in view of \eqref{eq:p>=1} (note, however, this is not the case  when we work with the approximations below)}.
For the sake of simplicity most of the time we do not mark explicitly the dependence on $b_n$. 
  Recall $b_n = ((1/2)
n\log n)^{1/\alpha}$ and $k_n = \floor{n/d_n}$. 
  Define the  following cluster measure 
\[
{{\vv\nu}}^{(\rho)}:  = \vartheta_\rho \int_0^\infty \esp\left[ \delta_{r \vv Q}\right] \alpha
 r^{-\alpha-1} dr,\quad \rho\in [0,1],
\]
 {where $\vv Q$ is the conditional spectral tail process introduced in (\ref{eq:Q = Theta}).}
So ${\vv\nu}_{d_n},{\vv\nu}\topp\rho\in\cal{M}_{\wt{\cal{B}}_0}$. Recall $\vartheta_\rho = (1-2\rho\beta)\mathsf q_{F,2}$, and $\theta = \vartheta_1$.  

Our second main result is the following on the convergence of cluster measures.
\begin{Thm}\label{thm:2}
 Suppose 
 \equh\label{eq:d_n assump}
{\limn \frac{\log d_n}{\log n} = \rho\in(0,1] \qmand d_n = o(n) \mbox{ when } \rho = 1. }
 \eque Then,  
\begin{align*}
{\vv\nu}_{d_n}    \overset{v^\#}{\longrightarrow} {\vv\nu}^{(\rho)} 
\end{align*}
in $\calM_{\wt\calB_0}$ as $n\to\infty$. 
\end{Thm}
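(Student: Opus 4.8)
The plan is to reduce the $v^\#$-convergence in $\calM_{\wt\calB_0}$ to a single‑cluster statement. Since $\vv\nu_{d_n}$ and $\vv\nu^{(\rho)}$ are deterministic measures, by the description of the boundedness $\wt\calB_0$ on $S$ in Appendix~\ref{sec:topo} it is enough to prove
\[
\vv\nu_{d_n}(f)=k_n\,\esp\bigl[f(b_n^{-1}\vv X_{d_n})\bigr]\ \longrightarrow\ \vartheta_\rho\int_0^\infty\esp\bigl[f(r\vv Q)\bigr]\,\alpha r^{-\alpha-1}\,dr
\]
for every $f\in\Clipb(S)$ whose support lies in $\{[\vv z]\in S:\snn{\vv z}_\infty>\epsilon\}$ for some $\epsilon>0$. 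The $\wt\calB_0$-boundedly finiteness of $\vv\nu^{(\rho)}$ and the uniform bound $\sup_n\vv\nu_{d_n}(\{\snn{\vv z}_\infty>\epsilon\})<\infty$, which follows from a union bound over the $d_n$ coordinates together with \eqref{eq:X_0 tail}, take care of the remaining tightness and regularity, so the whole problem becomes the asymptotics of one cluster.

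For that I would run essentially the machinery of Section~\ref{sec:PP}, but for a single block, and hence \emph{without} the Poisson‑over‑blocks step: use the triangular-array representation \eqref{eq:p>=1} on the window $\{1,\dots,d_n\}$, truncate the aggregation to the first $m_n$ layers, de-aggregate, and do a mean computation. The one essential change is the calibration of $m_n$: because $\vv\nu_{d_n}$ is a \emph{deterministic} measure, all approximations must now be controlled in $L^1$ \emph{after multiplication by $k_n$}, not merely in probability as in Theorem~\ref{thm:1}. Since a larger $m_n$ leaves a smaller remainder, and since the Poisson‑over‑blocks constraint $m_n\ll w_n^2\log n/n$ of \eqref{eq:m_n ppc} no longer applies, $m_n$ should be taken near the \emph{upper} end of the window \eqref{eq:m_n ppc 2}, say $m_n\asymp w_n/d_n^{\beta}$. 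Three steps then have to be carried out, each an $L^1$-with-$k_n$ strengthening of a step in Section~\ref{sec:PP}: (a) a truncation estimate $k_n\,\esp\bigl[b_n^{-1}\max_{k\le d_n}|X_{n,k}-\wt X_{n,k}|\wedge 1\bigr]\to 0$, where $\wt X_{n,k}$ keeps only layer-pairs with indices $\le m_n$ --- the analogue of Lemma~\ref{Lem:remainder whole n}, which after bounding the maximum over the block by the sum over its $d_n$ coordinates comes down to $\sum_{i>m_n}i^{-1/\alpha}\asymp m_n^{1-1/\alpha}\to 0$ (using $\alpha<1$) together with exponent bookkeeping in which $\beta<1/2$ enters; (b) a de-aggregation estimate showing that, off an event of probability $o(k_n^{-1})$, each cluster receives a contribution from at most one layer-pair and the aggregate of the remaining pairs has sup-norm $O(\epsilon)$, adapting the $\Omega_{n,1}\cap\Omega_{n,2}(\epsilon)$ argument of Proposition~\ref{prop:5.2}; (c) the mean computation.

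For step (c), after (a) and (b) one is left with
\[
\vv\nu_{d_n}(f)= k_n\binom{m_n}{2}\,\esp\bigl[f(c_n^{-1}V_{1,2}^{*}\,\vv Q^{(1,2)}_{n,d_n,1})\bigr]+o(1),\qquad c_n=\bigl(b_n^{\alpha}m_n^{2}/w_n^{2}\bigr)^{1/\alpha},
\]
into which one inserts, exactly as in Section~\ref{sec:PP}, $\proba(V_{1,2}>y c_n)\sim y^{-\alpha}w_n^{2}\log(b_n^{\alpha}m_n^{2}/w_n^{2})/(b_n^{\alpha}m_n^{2})$ together with the conditional-cluster limit from Lemma~\ref{Lem:conv Q}, namely $\proba(R_{n,1,2}\cap\calI_{d_n,1}\ne\emptyset)\sim\mathfrak q_{F,2}\,d_n/w_n^{2}$ and $\calL(\vv Q^{(1,2)}_{n,d_n,1}\mid R_{n,1,2}\cap\calI_{d_n,1}\ne\emptyset)\Rightarrow\calL(\vv Q)$. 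Using $k_n d_n\sim n$, $b_n^{\alpha}=\tfrac12 n\log n$, $w_n\sim(\mathsf C_F/(1-\beta))n^{1-\beta}$, and now $\log d_n\sim\rho\log n$ (this is where $\rho$ enters, through the upper bound on $m_n$), the $\log$-factor contributes
\[
\frac{\log(b_n^{\alpha}m_n^{2}/w_n^{2})}{\log n}\ \longrightarrow\ 1-2\rho\beta,
\]
which is precisely what upgrades $\mathfrak q_{F,2}$ to $\vartheta_\rho=(1-2\rho\beta)\mathfrak q_{F,2}$; the factor $\alpha r^{-\alpha-1}\,dr$ and the factor $\esp[\delta_{r\vv Q}]$ come out of the tail of $V_{1,2}$ and the conditional limit of the shape in the usual way. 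Letting $\epsilon\downarrow 0$, permitted because $\alpha<1$, finishes the argument.

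The hardest part, and the technical source of the phase transition, is step (a) together with the calibration of $m_n$. In the macroscopic analysis of Theorem~\ref{thm:1} the block-level approximations are needed only in probability after summing over the $k_n$ disjoint blocks, and there the Poisson‑over‑blocks step pins $m_n$ to the small end $\asymp w_n^2/n$, producing $\theta=\vartheta_1$; here the $L^1$-with-$k_n$ requirement, with nothing to constrain $m_n$ from above, forces $m_n$ up to the $d_n$-dependent scale, and it is exactly this that makes the limiting constant depend on $\rho$. In the language of the introduction, the layer-pairs $(i_1,i_2)$ responsible for this borderline contribution --- roughly those with $i_1$ small and $w_n^2/n\ll i_2\lesssim m_n$ --- carry mass at the mesoscopic scale $d_n/n$ but not at the macroscopic scale $O(1)$; isolating them and bounding the associated second-moment quantities in (b) is the delicate combinatorial core, with the boundary case $\rho=1$ (where the exponent in (a) degenerates to $0$, so one must use the strict lower bound on $m_n$ in \eqref{eq:m_n ppc 2}, the logarithmic slack, and the hypothesis $d_n=o(n)$) needing a slightly more careful treatment. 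Finally, Theorem~\ref{thm:2} yields the block-maximum asymptotics \eqref{eq:block EVT} of Theorem~\ref{thm:2'} for $\rho\in(0,1]$ by the standard argument recalled just after the statement of Theorem~\ref{thm:2'}.
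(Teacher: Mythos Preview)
Your overall architecture is exactly the paper's: drop the Poisson-over-blocks step, keep the de-aggregation and mean computation essentially as in Section~\ref{sec:PP}, and tighten the truncation to an $o(k_n^{-1})$ (equivalently, $L^1$-after-$k_n$) estimate; the calibration $m_n\asymp w_n/d_n^{\beta}$ is correct and is precisely what makes $\log(nm_n^{2}/w_n^{2})/\log n\to 1-2\rho\beta$. Step~(b) is not where the work is: Proposition~\ref{prop:5.2} already gives an $L^1$ bound summed over the $k_n$ blocks, so the single-block version follows immediately (this is the paper's Proposition~\ref{prop:de-agg}).

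The genuine gap is in your step~(a). Your description --- ``bound the maximum over the block by the sum over its $d_n$ coordinates \dots comes down to $\sum_{i>m_n}i^{-1/\alpha}$'' --- is precisely the argument of Lemma~\ref{Lem:remainder whole n}, and the paper points out in Remark~\ref{rem:curious} that this \emph{fails} at the mesoscopic scale. Concretely, for the piece $A_{n,1}$ (one index $i_1\le m$ bounded, the other $i_2>m_n$), the factor $\Gamma_{1}^{-1/\alpha}$ has infinite mean, so one uses $\proba(\Gamma_1<x)\le x$ together with $(\esp Z_{n,d_n,m_n})^{\alpha}$; this yields $\proba(A_{n,1}>\epsilon)\le C\bigl(d_n b_n^{-1}w_n^{2/\alpha-2}m_n^{1-1/\alpha}\bigr)^{\alpha}$, which is $o(k_n^{-1})$ only if $m_n\gg w_n^{2}/(d_n\log^{1/(1-\alpha)}n)$. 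For $\rho\le 1$ this is \emph{incompatible} with the de-aggregation upper bound $m_n\ll w_n\log n/d_n^{\beta}$, so no admissible $m_n$ exists and the argument collapses. Your $L^1$-with-$\wedge 1$ reformulation does not help: integrating out $\Gamma_1$ gives $\esp[A_{n,1}\wedge 1]\asymp\esp[Z_{n,d_n,m_n}^{\alpha}]$, and Jensen brings you back to the same impossible constraint.

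What actually works (Lemma~\ref{lem:5.5'}) is a further three-way split of the range $i_2>m_n$ into $I_{n,1,\delta}=\{m_n,\dots,n^{1-\beta-\delta}\}$, $I_{n,2,\delta}=\{n^{1-\beta-\delta}+1,\dots,w_n\}$, $I_{n,3}=\{w_n+1,\dots\}$. For $I_{n,3}$ and $I_{n,2,\delta}$ one conditions on $k\in R_{n,1}$ and uses $\esp Z^{\alpha}\le(\esp Z)^{\alpha}$ versus $(\sum c_i)^{\alpha}\le\sum c_i^{\alpha}$, respectively; the $I_{n,2,\delta}$ bound is $O(\delta/k_n)$ and is killed by $\delta\downarrow 0$. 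The crucial range is $I_{n,1,\delta}$: there one first conditions on $\{R_{n,1}\cap\calI_{d_n,1}\ne\emptyset\}$, which already costs only $O(k_n^{-(1-\beta)})$, then restricts to a high-probability event on which at most $p_\delta$ of the $R_{n,i}$, $i\le n^{1-\beta-\delta}$, hit any given site $k\le d_n$, and only then applies $\proba(\Gamma_1<x)\le x$ with $x\asymp w_n^{2}/(b_n^{\alpha}m_n)$. It is this last step that produces the \emph{lower} constraint $m_n\gg w_n/(d_n^{\beta}\log n)$ in \eqref{eq:m_n clust low}, which now meshes with the upper constraint and pins $\log m_n/\log n\to 1-(1+\rho)\beta$. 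So the ``exponent bookkeeping'' you allude to is not bookkeeping at all; it is this conditioning-plus-splitting device, and without it your step~(a) does not close.
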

\begin{proof}[Proof of Theorem \ref{thm:2'}]
 As an immediate corollary of Theorem \ref{thm:2}, we obtain the statement of Theorem \ref{thm:2'} under the assumption \eqref{eq:d_n assump} for $\rho\in (0,1]$. It remains to prove \eqref{eq:block EVT} for $\rho=0$.  The argument relies on the following general monotonicity fact.
Recall that $k_n = \floor{n/d_n}$. Suppose $d_n\ll d_n'$,  and define $k_n' = \floor{n/d_n'}$. Then
\equh\label{eq:block-monotonicity}
\liminf_{n\to\infty} k_n\P\left(\max_{i=1,\ldots,d_n}X_i>b_n\right)\geq 
\limsup_{n\to\infty} k_n'\P\left(\max_{i=1,\ldots,d_n'}X_i>b_n\right).
\eque
Indeed, to see \eqref{eq:block-monotonicity} applying an union bound gives
\begin{align}\label{eq:union-bound}
k_n'\P\left(\max_{i=1,\ldots,d_n'}X_i>b_n\right)&\leq k_n'(d_n'/d_n+1)\P\left(\max_{i=1,\ldots,d_n}X_i>b_n\right)\nonumber
\\&=k_n \P\left(\max_{i=1,\ldots,d_n}X_i>b_n\right)(1+o(1)).
\end{align}
Another union bound and the definition of $b_n$ yield 
\begin{align*}
\limsup_{n\to\infty} k_n\P\left(\max_{i=1,\ldots,d_n}X_i>b_n\right)\leq \limsup_{n\to\infty}k_n d_n\P(X_1>b_n)=1.
\end{align*}
Now, the conclusion \eqref{eq:block-monotonicity} follows by taking the limits in \eqref{eq:union-bound}.

To  show \eqref{eq:block EVT} for $\rho=0$, we   use \eqref{eq:block-monotonicity} with $d_n'$ such that $\limn\log d_n'/\log n = \rho$ for an arbitrary $\rho\in (0,1)$ and $d_n\to\infty, d_n = o(\log n)$. We have then 
\begin{align*}
\liminf_{n\to\infty} k_n\P\left(\max_{i=1,\ldots,d_n}X_i>b_n\right)\geq 
(1-2\rho \beta)\mathsf{q}_{F,2},
\end{align*}
and since $\rho$ is arbitrary, we get 
\begin{align*}
\liminf_{n\to\infty} k_n\P\left(\max_{i=1,\ldots,d_n}X_i>b_n\right)\geq 
\mathsf{q}_{F,2}.
\end{align*}
At the same time, if $\tilde d_n=o(\log n)$ then the anticlustering condition $\AC(d_n,b_n)$ is fulfilled (see the comments after Theorem \ref{thm:2'}), and hence {by \cite[Corollary 6.2.6]{kulik20heavy}}, we have
\begin{align*}
\lim_{n\to\infty} \tilde k_n \P\left(\max_{i=1,\ldots,\wt{d}_n}X_i>b_n\right)=\mathsf{q}_{F,2},
\end{align*}
where $\tilde k_n=\sfloor{n/\tilde d_n}$;  Using again \eqref{eq:block-monotonicity}, this time with $d_n=\tilde d_n$ and $d_n'$ slowly varying, we can apply a squeeze argument to conclude \eqref{eq:block EVT} for $\rho=0$. 
\end{proof}
\color{magenta}
\color{black}
 
{\bf The proof of Theorem \ref{thm:2} proceeds again by a series of approximations.}
However, there are three key differences compared to the proof of Theorem \ref{thm:1}. 
\begin{enumerate}[(i)]
\item The proof this time consists of three steps, the de-aggregation, the mean approximation, and the truncation, with the previous step of Poisson approximation replaced by the mean approximation. The de-aggregation step follows immediately from the previous de-aggregation step in Proposition \ref{prop:5.2}. 
\item The mean approximation is part of the previous Poisson approximation and straightforward. The absence of a Poisson approximation means that there is  no need to control the second moment, and hence the restriction $m_n\ll w_n^2\log n/n$ {(so $\limsupn\log m_n/\log n\le 1-2\beta$)}
 is not needed. Therefore, the upper constraint on $m_n$ now comes from the de-aggregation step, which imposes 
\begin{equation}\label{eq:m_n clust upp}
m_n\ll \frac{w_n}{d_n^\beta}{\log n}
 \qmwith \rho\in(0,1].
\end{equation}
{(So $\limsupn\log m_n/\log n\le 1-(1+\rho)\beta$.)}
\item At the same time, the estimate of the truncation becomes much more involved than before, and this time we need to impose 
\begin{equation}\label{eq:m_n clust low}
m_n\gg \frac{w_n}{d_n^\beta}\frac 1{\log n},
\end{equation}
 which replaces  the requirement $m_n\gg w_n^2/(n\log ^{1/(1-\alpha)}n)$ 
  from \eqref{eq:m_n ppc}. Most of the effort is devoted to this step; see Section \ref{sec:truncation single}. 
\end{enumerate}
\begin{Rem}\label{rem:why}
Comparing Theorem \ref{thm:2} with Theorem \ref{thm:1}, we point out an unusual feature: Despite the weak convergence in \eqref{eq:PPC}, the mean measure of the point process  $\xi_n$  does not converge to that of the Poisson point process $\xi$ under a certain rate restriction of the block size $d_n$ as $n\rightarrow\infty$. In particular,  Theorem \ref{thm:2} implies that when {$\limn\log d_n/\log n = \rho$ with $\rho\in(0,1)$}
the mean measure $\esp\xi_n$ converges vaguely to ${\vv\nu}^{(\rho)}\otimes\Leb$, which is strictly greater than the mean measure $\esp\xi={\vv\nu}^{(1)}\otimes\Leb$ since $\vartheta_\rho>\vartheta_1$. Note that this discrepancy does not lead to a contradiction since, in general, a weak convergence of the point process does not necessitate a convergence of the mean measure. Only the  Fatou-type relation $\liminf_n\esp\xi_n\ge \esp \xi$  follows from the weak convergence, which is consistent with our result.  

Our proofs reveal more of this delicate phenomenon. Note that in \eqref{eq:m_n clust low} $m_n$ is at a faster rate than $n^{1-2\beta}$ in \eqref{eq:m_n ppc}. Our proofs of Theorems \ref{thm:1} and \ref{thm:2} then indicate that including those renewals indexed by $i\in\{n^{1-(2-\delta)\beta},\dots,n^{1-(1+\rho)\beta}\}$ for any $1\le 1+\rho<2-\delta<2$  will not affect the limit behavior at the macroscopic level (of probability of order $O(1)$), but will affect the limit behavior at the mesoscopic level (rare events with probability of order $O(k_n\inv)$). In other words, 
while for the Poisson limit theorem the contributing renewals are those with index $i = O(n^{1-2\beta})$, if one is interested in the asymptotic exceedance probability over a block (of size $d_n = o(n)$) alone, more layers of renewals should be taken into account in the approximation. Necessarily, the exceedance events over different blocks caused from these renewals are asymptotically dependent. A precise characterization of the joint exceedance probabilities over different blocks is of its own interest. We do not pursue this question here.
\end{Rem}
This time we write 
\begin{align*}
\vvX_{d_n,m_n}^* &= ({X}_{n,m_n,1}^*,\dots, {X}_{n,m_n,d_n}^*) \mwith {X}_{n,m_n,k}^*:= 
\pp{\frac{w_{n}}{m_n}}^{2/\alpha}
 \sum_{1\le i_1<i_2\le m_n}  V^*_{i_1,i_2}   \inddd{k\in R_{n,i_1,i_2}},\\
\widetilde{\vvX}_{d_n,m_n}& = (\widetilde X_{n,m_n,1},\dots,\widetilde X_{n,m_n,d_n}) \mwith \widetilde X_{n,m_n,k}:= 
\pp{\frac{w_{n}}{\Gamma_{m_n+1}}}^{2/\alpha}
 \sum_{1\le i_1<i_2\le m_n}  V^*_{i_1,i_2}
    \inddd{k\in R_{n,i_1,i_2}}.  
\end{align*}
The random variables ${X}_{n,m_n,k}^*$ 
 and $\widetilde{X}_{n,m_n,k}$ are the same as $X_{n,k}^*$ and $\widetilde X_{n,k}$ in \eqref{eq:X star whole} 
 and \eqref{eq:X tilde} respectively, but now we indicate the dependence on $m_n$ in the notation. 
We define 
\begin{align}\label{eq:nu*}
{\vv\nu}^*_{d_n,m_n}\equiv {\vv\nu}_{d_n,b_n,m_n}^* &:=k_n\esp(\delta_{\vvX_{d_n,m_n}^*/b_n};\vvX_{d_n,m_n}^*\ne\vv0),\\
\widetilde{{\vv\nu}}_{d_n,m_n}\equiv \widetilde{{\vv\nu}}_{d_n,b_n,m_n} &:=k_n\esp(\delta_{\widetilde{\vvX}_{d_n,m_n}/b_n};\widetilde{\vvX}_{d_n,m_n}\ne\vv0),\label{eq:nu_tilde}\\
{\vv\nu}^\circ_{d_n,m_n}\equiv {\vv\nu}^\circ_{d_n,b_n, m_n}&:=k_n\esp\pp{\sum_{1\le i_1<i_2\le m_n}\delta_{V^*_{i_1,i_2}\vv Q_{n,d_n,1}\topp{i_1,i_2}/c_n}\inddd{R_{n,i_1,i_2}\cap \calI_{d_n,1}\ne\emptyset}}.\nonumber
\end{align}
Introduce 
\begin{equation}\label{eq:eta rho}
\eta\topp\rho:=\sif\ell1 \ddelta{ \vartheta_\rho^{1/\alpha} \Gamma_\ell^{-1/\alpha},\  \vv{Q}_\ell ,\ U_\ell     }, \ \rho\in(0,1].
\end{equation}
So $\eta\topp1 = \eta$.

We first prove the de-aggregation and mean approximation steps in the following single proposition. More precisely, the de-aggregation is \eqref{eq:single 1} below, and the mean approximation is \eqref{eq:single 2} below. 

\begin{Prop}\label{prop:de-agg}
Under the assumptions \eqref{eq:d_n assump}, \eqref{eq:m_n clust upp} 
and 
{\equh\label{eq:m_n rate}
\limn\frac{\log m_n}{\log n} = 1-(1+\wt\rho)\beta \qmwith \wt\rho\in[\rho,1],
\eque
}
 we have
\[
\limn\abs{{\vv\nu}_{d_n,m_n}^*(H) - {\vv\nu}\topp{\wt\rho}(H)} = 0, 
\]
for all $H\in\Clipb(S)$. In particular,
\[
\proba\pp{\frac{\nn{\vv X^*_{d_n,m_n}}_\infty}{b_n}> y}\sim \vartheta_{\wt\rho} d_n \proba\pp{\frac{X_0}{b_n}> y}, \mfa y>0.
\]
\end{Prop}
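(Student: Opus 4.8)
The plan is to prove the two displayed claims in sequence: the first by combining a de-aggregation step with a direct mean computation, the second by a portmanteau argument on top of the first.

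For the de-aggregation, I would fix $H\in\Clipb(S)$ and lift it to $\wt f(\vv x,u):=H(\vv x)\in\Clipb(S\times[0,1])$. By the shift invariance of $R_n$ in Lemma \ref{lem:Rm}(i), for every $j$ the block $\vvX^*_{d_n,j}$, the vectors $\vv Q_{n,d_n,j}^{(i_1,i_2)}$ and the events $\{R_{n,i_1,i_2}\cap\calI_{d_n,j}\ne\emptyset\}$ are equal in law to those for $j=1$, so that $\esp\,\xi^*_n(\wt f)={\vv\nu}^*_{d_n,m_n}(H)$ and $\esp\,\xi^\circ_n(\wt f)={\vv\nu}^\circ_{d_n,m_n}(H)$. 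I would then check that \eqref{eq:d_n assump}, \eqref{eq:m_n clust upp} and \eqref{eq:m_n rate} imply the hypotheses of Lemma \ref{lem:5.1'}: the exponent $1-(1+\wt\rho)\beta$ lies in $[1-2\beta,\,1-(1+\rho)\beta]\subset(0,\infty)$, from which \eqref{eq:m_n ppc 2} follows (requiring, when $\wt\rho=1$, that $m_n$ also exceed its lower bound there). Proposition \ref{prop:5.2} applied to $\wt f$ then gives $|{\vv\nu}^*_{d_n,m_n}(H)-{\vv\nu}^\circ_{d_n,m_n}(H)|\le\esp|\xi^*_n(\wt f)-\xi^\circ_n(\wt f)|\to0$, reducing everything to showing ${\vv\nu}^\circ_{d_n,m_n}(H)\to{\vv\nu}^{(\wt\rho)}(H)$.

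For this mean approximation I would expand ${\vv\nu}^\circ_{d_n,m_n}(H)$ and, by the permutation-invariance trick already used around \eqref{eq:EN}, replace every $V^*_{i_1,i_2}$ by $V_{i_1,i_2}=(U_{i_1}U_{i_2})^{-1/\alpha}$; the summands then become identically distributed and $V_{1,2}$ independent of $(R_{n,1},R_{n,2})$, so, writing $\calE_n:=\{R_{n,1,2}\cap\calI_{d_n,1}\ne\emptyset\}$,
\[
{\vv\nu}^\circ_{d_n,m_n}(H)=k_n\binom{m_n}{2}\P(\calE_n)\int_0^\infty\esp\bb{H(r\vv Q_{n,d_n,1}^{(1,2)})\mmid\calE_n}\,\P\pp{V_{1,2}/c_n\in dr}.
\]
Setting $A_n:=\frac{2\log(nm_n^2/w_n^2)}{\log n}\frac{w_n^2}{m_n^2n}$, one has $\P(V_{1,2}>rc_n)\sim A_n r^{-\alpha}$ from \eqref{eq:V_1,2 tail}, and using $k_n\sim n/d_n$, $\P(\calE_n)\sim\mathfrak q_{F,2}d_n/w_n^2$, $w_n\sim\frac{\mathsf C_F}{1-\beta}n^{1-\beta}$ and \eqref{eq:m_n rate} one computes
\[
k_n\binom{m_n}{2}\P(\calE_n)A_n\longrightarrow\mathfrak q_{F,2}\lim_n\tfrac{\log(nm_n^2/w_n^2)}{\log n}=\mathfrak q_{F,2}(1-2\wt\rho\beta)=\vartheta_{\wt\rho}.
\]
Since $A_n^{-1}\P(V_{1,2}/c_n\in dr)\to\alpha r^{-\alpha-1}dr$ vaguely on $(0,\infty)$, $\esp[H(r\vv Q_{n,d_n,1}^{(1,2)})\mid\calE_n]\to\esp[H(r\vv Q)]$ by Lemma \ref{Lem:conv Q}, and the $r$-integrand is supported in $\{r\ge\epsilon_0\}$ for some $\epsilon_0>0$ (with $\esp[H(r\vv Q)]=0$ for $r<\epsilon_0$, as $\nn{\vv Q}_\infty=\Theta_0=1$), a dominated-convergence argument then gives ${\vv\nu}^\circ_{d_n,m_n}(H)\to\vartheta_{\wt\rho}\int_0^\infty\esp[H(r\vv Q)]\alpha r^{-\alpha-1}dr={\vv\nu}^{(\wt\rho)}(H)$. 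For the ``in particular'' statement I would note that $\{\vv z\in S:\nn{\vv z}_\infty>y\}$ is $\wt\calB_0$-bounded with ${\vv\nu}^{(\wt\rho)}$-null boundary (because $\nn{r\vv Q}_\infty=r$ and $\{r=y\}$ is Lebesgue-null), so sandwiching its indicator between $\Clipb(S)$-functions gives $k_n\P(\nn{\vvX^*_{d_n,m_n}}_\infty/b_n>y)={\vv\nu}^*_{d_n,m_n}(\{\nn{\vv z}_\infty>y\})\to\vartheta_{\wt\rho}y^{-\alpha}$, and then $k_n\sim n/d_n$ with \eqref{eq:X_0 tail} yield $\P(\nn{\vvX^*_{d_n,m_n}}_\infty/b_n>y)\sim\vartheta_{\wt\rho}y^{-\alpha}/k_n\sim\vartheta_{\wt\rho}d_n\P(X_0/b_n>y)$.

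I expect the main obstacle to be the mean-approximation step, specifically pinning down $\vartheta_{\wt\rho}$: this is the only genuinely new point compared with Section \ref{sec:PP}, requiring careful tracking of the slowly varying prefactor $\log(nm_n^2/w_n^2)/\log n\to 1-2\wt\rho\beta$ (so that a faster-growing $m_n$ produces a strictly larger constant than $\vartheta_1$), together with a careful justification of the interchange of limit and $dr$-integral from only the vague convergence of the rescaled tail of $V_{1,2}/c_n$ and the conditional convergence of Lemma \ref{Lem:conv Q}. The remaining ingredients are re-runs of Propositions \ref{prop:5.2} and \ref{prop:5.3}, the one extra care being to confirm that the enlarged range of $m_n$ permitted by \eqref{eq:m_n clust upp} still fits the hypotheses of Proposition \ref{prop:5.2}.
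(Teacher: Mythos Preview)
Your proposal is correct and follows the paper's two-step strategy: de-aggregation via Proposition~\ref{prop:5.2} applied to $f_H(\vv x,u):=H(\vv x)$, then a mean computation in which the key new input is $\log(nm_n^2/w_n^2)/\log n\to 1-2\wt\rho\beta$ under \eqref{eq:m_n rate}. The only difference is presentational: for the mean step the paper recycles the computation \eqref{eq:mean} (now with \eqref{eq:rho_n-a} producing $\vartheta_{\wt\rho}$ instead of $\vartheta_1$) and then pushes forward through the continuous map $T$ of \eqref{eq:T}, whereas you write ${\vv\nu}^\circ_{d_n,m_n}(H)$ directly as an $r$-integral and pass to the limit by combining the total-variation convergence of Lemma~\ref{Lem:conv Q} with the vague convergence $A_n^{-1}\P(V_{1,2}/c_n\in\cdot)\to\alpha r^{-\alpha-1}\,dr$ on $(0,\infty)$. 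Your route is slightly more self-contained; the paper's is more economical in that it cites work already done. Your caveat about the lower bound in \eqref{eq:m_n ppc 2} when $\wt\rho=1$ is formally fair but harmless, since the proofs of Lemma~\ref{lem:5.1'} and Proposition~\ref{prop:5.2} use only the upper constraint there.
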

{For the proof of Theorem \ref{thm:2}, we only need the case $\wt{\rho}=\rho$ from the Proposition \ref{prop:de-agg} due to the lower rate restriction \eqref{eq:m_n clust low}.}

\begin{proof}[Proof of Proposition \ref{prop:de-agg}]
{
We will  make use of results from the de-aggregation step in Section \ref{sec:de-agg}.
First,   under \eqref{eq:d_n assump},  the requirement \eqref{eq:m_n ppc 2} becomes  
\equh\label{eq:single block rate}
\frac{w_{n}^2}{n}\frac1{\log^{1/(1-\alpha)}n}\ll m_n\ll  \frac{w_n}{d_n^\beta}\log n,
\eque
since if $\limn\log d_n/\log n = \rho$ with $\rho>0$, the term $w_n/n^\delta$ on the right-hand side of \eqref{eq:m_n ppc 2} can be dropped by taking $\delta<\rho/\beta$.  Note that \eqref{eq:single block rate} is implied by our assumptions on $m_n$. We then claim that
 for all $H\in\Clipb(S)$,
\equh\label{eq:single 1}
\limn\abs{{\vv\nu}_{d_n,m_n}^*(H)-{\vv\nu}_{d_n,m_n}^\circ(H)} = 0.  
\eque}
Indeed, write $\xi_{n,d_n,m_n}^* = \xi_n^*$ to explicitly indicate the block size $d_n$ and also the truncation level $m_n$ in the notation. 
Let $H\in\Clipb(S)$ and we introduce $f_H\in\Clipb(S\times[0,1])$ by $f_H(\vvx,s) :=H(\vvx)$.  
We have, in view of \eqref{eq:xi_n*}, that
\[
{\vv\nu}_{d_n,m_n}^*(H) = \esp \xi_{n,d_n,m_n}^*(f_H)+o(1) \qmand {\vv\nu}^\circ_{d_n,m_n}(H) = \esp \xi_n^\circ(f_H)+o(1),
\]
where the term $o(1)$ is due to the fact that $d_n$ may not divide $n$ and in this case there is an incomplete block $\{\floor{n/d_n}d_n+1,\dots,n\}$ that does not effect the asymptotic analysis.
Now, \eqref{eq:single 1} follows immediately from Proposition \ref{prop:5.2} as
\[
\abs{{\vv\nu}_{d_n,m_n}^*(H)-{\vv\nu}_{d_n,m_n}^\circ(H)} \le \esp\abs{\xi_{n,d_n,m_n}^*(f_H)-\xi_{n,d_n,m_n}^\circ(f_H)}+o(1)\to 0.  
\]

It remains to show that under the assumption in the proposition, for all $H\in\Clipb(S)$, 
\equh\label{eq:single 2}
\limn\abs{{\vv\nu}_{d_n,m_n}^\circ(H)-{\vv\nu}\topp{\wt\rho}(H)}=0. 
\eque
Introduce \[
\xi\topp{\rho}:=\sif \ell1\ddelta{\vv Q_\ell\vartheta_\rho^{1/\alpha}\Gamma_\ell^{-1/\alpha},U_\ell}.
\]
With \eqref{eq:m_n rate} 
 we claim that  
\equh\label{eq:mean conv1}
\limn \esp \xi_{n,d_n,m_n}^\circ(f) = \esp \xi\topp{\wt\rho}(f),
\eque
for all $f\in \Clipb(S\times[0,1])$, that is, we have the vague convergence of the mean measures. 
Indeed, 
recall $\eta_n$ from (\ref{eq:eta_n}). In the proof of Proposition \ref{prop:5.3}, the relations in the line of 
(\ref{eq:rho_n-a}) is valid for arbitrary $m_n\rightarrow\infty$. In particular, with \eqref{eq:m_n rate} 
we have
\begin{align*}
&\proba(c_n^{-1}V_{1,2}>\epsilon)\proba\pp{R_{n,1,2}\cap \cal{I}_{d_n,1}\neq \emptyset} 
\sim  2 (1-2\tilde{\rho}\beta)\mathsf q_{F,2} \epsilon^{-\alpha} \frac{1}{m_n^2 k_n} = 2 \vartheta_{\wt{\rho}} \epsilon^{-\alpha} \frac{1}{m_n^2 k_n}.
\end{align*}
Hence, the corresponding limiting point process of $\eta_n$   is $\eta^{(\tilde\rho)}$ from \eqref{eq:eta rho}.
{By a standard approximation argument (e.g.,   \cite[Theorem B.1.17]{kulik20heavy} and \cite[Lemma 4.1]{kallenberg17random})}, it follows from \eqref{eq:mean} that
$\limn \esp \eta_n(f) = \esp \eta\topp{\wt\rho}(f)$ for all $f\in \Clipb((0,\infty]\times S^*\times[0,1])$.
Then based on the transformation $T$ in \eqref{eq:T},  we can derive \eqref{eq:mean conv1}; {see again \cite[Proposition 5.5]{resnick07heavy}}. 
Therefore, as $n\rightarrow\infty$,
\[
\abs{{\vv\nu}_{d_n,m_n}^\circ(H)-{\vv\nu}\topp{\wt\rho}(H)}\le \abs{\esp\xi_{n,d_n,m_n}^\circ (f_H)-\esp \xi\topp{\wt\rho}(f_H)}+o(1)\to 0,
\]
as desired.\end{proof}

\subsection{Truncation}\label{sec:truncation single}
It remains to show the truncation step. We shall establish the following.
\begin{Prop}\label{prop:truncation}
Under \eqref{eq:d_n assump} and  \eqref{eq:m_n clust low},
we have for all $H\in\Clipb(S)$, 
\[
\limn\abs{{\vv\nu}_{d_n,m_n}(H) - {\vv\nu}_{d_n,m_n}^*(H)} = 0.
\]
\end{Prop}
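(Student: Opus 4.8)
The plan is to bound the difference between $\vv X_{d_n}$ and the truncated vector $\vvX_{d_n,m_n}^*$ uniformly over the block, exactly in the spirit of the truncation step in Section \ref{sec:PP} (Proposition \ref{prop:5.4}) but now at the level of a single block rather than the whole sample. First I would decompose, as in \eqref{eq:bridge1}--\eqref{eq:bridge2}, via the intermediate vector $\widetilde{\vvX}_{d_n,m_n}$ built from the exact Poisson arrivals $\Gamma_i$, writing
\[
{\vv\nu}_{d_n,m_n}(H) - {\vv\nu}_{d_n,m_n}^*(H)
= \bigl({\vv\nu}_{d_n,m_n}(H) - \widetilde{{\vv\nu}}_{d_n,m_n}(H)\bigr)
+ \bigl(\widetilde{{\vv\nu}}_{d_n,m_n}(H) - {\vv\nu}_{d_n,m_n}^*(H)\bigr),
\]
so that for $H\in\Clipb(S)$ supported on $\{\nn{\vv x}_\infty>\epsilon\}$ each term is controlled, via the Lipschitz property, by $k_n$ times an expectation of the form
\[
\esp\pp{\frac{\max_{k\le d_n}\abs{X_{n,k}-\widetilde X_{n,m_n,k}}}{b_n}\wedge C \ ;\ \max_{k\le d_n}\widetilde X_{n,m_n,k}> \tfrac\epsilon2 b_n }
\]
plus a term from $\proba(\max_{k\le d_n}\abs{X_{n,k}-\widetilde X_{n,m_n,k}}>\delta b_n)$ interacted with the (now finite) mass $\widetilde{{\vv\nu}}_{d_n,m_n}(\{\nn{\vv x}_\infty>\epsilon/2\})$. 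The second bracket is the easy one: $\widetilde{\vvX}_{d_n,m_n} = (m_n/\Gamma_{m_n+1})^{2/\alpha}\vvX_{d_n,m_n}^*$, so it reduces, exactly as in Lemma \ref{lem:2'}, to the concentration of $\Gamma_{m_n+1}/(m_n+1)$ around $1$ via \eqref{eq:gamma concentration}, which decays faster than any polynomial in $m_n$, together with the already-established mass bound ${\vv\nu}_{d_n,m_n}^*(\{\nn{\vv x}_\infty>\epsilon/2\})=O(1)$ from Proposition \ref{prop:de-agg}.

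The real work is the first bracket, i.e.\ controlling $R_{n,k}:= X_{n,k}-\widetilde X_{n,m_n,k}$, the contribution of renewal pairs $(i_1,i_2)$ with $i_1\wedge i_2> m_n$ OR with one index $\le m_n$ and the other $>m_n$, restricted to the block $\{1,\dots,d_n\}$. I would split this remainder into the three pieces $A_{n,1},A_{n,2},A_{n,3}$ as in Lemma \ref{Lem:remainder whole n}, but with the maximum now over $k=1,\dots,d_n$ and the expectations multiplied by $k_n$ (since we are estimating $k_n\esp(\cdots)$, not just $\esp(\cdots)$). For the pieces $A_{n,2}$ (both indices $>m$, one $\le m_n$) and $A_{n,3}$ (both indices $>m_n$), bounding the maximum over $k\le d_n$ by the sum and using $\esp(\Gamma_{i_1}^{-1/\alpha}\Gamma_{i_2}^{-1/\alpha})\le C i_1^{-1/\alpha}i_2^{-1/\alpha}$ together with $\sum_{k=1}^{d_n}\proba(k\in R_{n,i_1,i_2})\le C d_n^{1+\beta}/w_n^3$ (the two-point estimate already derived before \eqref{eq:bn2 bound}, via \eqref{eq:u(n) asymp}), one gets a bound of order
\[
k_n\cdot \frac{w_n^{2/\alpha}}{b_n}\cdot \frac{d_n^{1+\beta}}{w_n^{3}}\cdot \sum_{m_n<i_1<i_2}i_1^{-1/\alpha}i_2^{-1/\alpha}
\asymp k_n\cdot\frac{d_n^{1+\beta}}{b_n w_n^{3-2/\alpha}}\cdot m_n^{2-2/\alpha},
\]
and the point is that this tends to $0$ precisely under the lower bound $m_n\gg w_n/(d_n^\beta\log n)$ in \eqref{eq:m_n clust low} (recall $b_n=((1/2)n\log n)^{1/\alpha}$, $w_n\sim \mathsf C_F n^{1-\beta}/(1-\beta)$, $k_n\sim n/d_n$, and $\alpha\in(0,1)$ so $2-2/\alpha<0$, making $m_n^{2-2/\alpha}$ a \emph{decreasing} function of $m_n$). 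The piece $A_{n,1}$ ($i_1\le m$ fixed, $i_2>m_n$) is handled as in \eqref{eq:A_n,1}: condition on $\Gamma_1$ and apply a Markov bound with exponent $\alpha$ to $\Gamma_1^{-1/\alpha}Z_n$, where $Z_n$ collects the $i_2>m_n$ contributions and has $\esp Z_n$ of the same order as $\esp A_{n,2}$, so the same $m_n$ lower bound suffices.

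The step I expect to be the main obstacle is making the block-restricted two-point bound $\sum_{k,k'\le d_n}\proba(k\in R_{n,i_1,i_2}, k'\in R_{n,i_1',i_2'})$ sharp enough in all the relevant index-overlap cases, together with tracking the extra factor $k_n$ carefully: unlike in Lemma \ref{Lem:remainder whole n}, here we must show $k_n\esp A_{n,i}\to0$, which is a genuinely stronger requirement and is exactly why the lower constraint on $m_n$ must be strengthened from $w_n^2/(n\log^{1/(1-\alpha)}n)$ to $w_n/(d_n^\beta\log n)$. One must also verify that the on-event expectation $\esp(\,\cdot\ ;\ \max_{k\le d_n}\widetilde X_{n,m_n,k}>\tfrac\epsilon2 b_n)$ — not merely the unconditioned one — is $o(1)$ after multiplication by $k_n$; this follows by Cauchy--Schwarz or by a direct union bound over the pair $(i_1,i_2)$ achieving the max, using the tail estimate \eqref{eq:V_1,2 tail} and the mass bound from Proposition \ref{prop:de-agg}, but it needs to be written out with some care. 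Once these estimates are assembled, combining with the easy second bracket and letting first $n\to\infty$ and then $\delta\downarrow0$ (and then $\epsilon\downarrow0$, although $\epsilon$ is fixed by $H$) completes the proof.
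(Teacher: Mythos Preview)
Your overall architecture is right, but the treatment of $A_{n,1}$ contains a genuine gap that the paper explicitly flags in Remark~\ref{rem:curious}. The naive bound $\proba(A_{n,1}>\epsilon)\le C(\esp Z_n)^\alpha$ with $\esp Z_n$ of the same order as $\esp A_{n,2}$ does \emph{not} give ``the same $m_n$ lower bound'': because of the extra exponent $\alpha\in(0,1)$, you need $\esp Z_n=o(k_n^{-1/\alpha})$ rather than $o(k_n^{-1})$, which after the algebra forces $m_n\gg w_n^2/(d_n\log^{1/(1-\alpha)}n)$. Writing $d_n\asymp n^\rho$ and $w_n\asymp n^{1-\beta}$, this lower bound has exponent $2-2\beta-\rho$, while the de-aggregation upper bound \eqref{eq:m_n clust upp} (which you invoke via Proposition~\ref{prop:de-agg} for the mass estimate) has exponent $1-\beta-\rho\beta$. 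Since $2-2\beta-\rho>1-\beta-\rho\beta$ for every $\rho<1$, there is no admissible $m_n$ and the argument collapses precisely in the mesoscopic range $\rho\in(0,1)$ that is the point of the theorem.

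The paper's Lemma~\ref{lem:5.5'} gets around this by a much finer analysis of $A_{n,1}$: the range $i_2>m_n$ is split into $I_{n,1}=\{m_n,\dots,w_{n,\delta}\}$, $I_{n,2}=\{w_{n,\delta}+1,\dots,w_n\}$, $I_{n,3}=\{w_n+1,\dots\}$ with $w_{n,\delta}=\lfloor n^{1-\beta-\delta}\rfloor$. For $I_{n,3}$ and $I_{n,2}$ one first extracts the factor $\proba(R_{n,1}\cap\calI_{d_n,1}\ne\emptyset)\sim C/k_n^{1-\beta}$ before any moment bound; for $I_{n,1}$ one additionally uses that, with probability $1-o(k_n^{-1})$, at most a fixed number $p_\delta$ of indices $i_2\le w_{n,\delta}$ satisfy $k\in R_{n,1,i_2}$ for some $k\le d_n$, so the inner sum is bounded by $p_\delta/\wt\Gamma_{m_n}^{1/\alpha}$ and the concentration of $\wt\Gamma_{m_n}/m_n$ finishes it. Only this last estimate produces the correct requirement $m_n\gg w_n/(d_n^\beta\log n)$. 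A minor side point: your claimed bound $\sum_{k=1}^{d_n}\proba(k\in R_{n,i_1,i_2})\le Cd_n^{1+\beta}/w_n^3$ is incorrect---the one-point probability is exactly $1/w_n^2$ by Lemma~\ref{lem:Rm}, so the sum is $d_n/w_n^2$; the three-renewal bound you cite is for a different quantity. This does not affect $A_{n,2},A_{n,3}$, but it is worth fixing.
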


The key to the truncation estimate is the following lemma. Interestingly, its counterpart Lemma \ref{Lem:remainder whole n} is much simpler (see Remark \ref{rem:curious}).
\begin{Lem}\label{lem:5.5'}
Assume \eqref{eq:m_n clust low}.
Then, for all $\epsilon>0$,
\[
 \proba\pp{\frac1{b_n}\max_{k=1,\dots,d_n}\abs{X_{n,k}-\wt X_{n,m_n,k}}>\epsilon} = o\pp{k_n\inv}
\]
as $n\rightarrow\infty$.
\end{Lem}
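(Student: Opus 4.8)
The plan is to write, using that the normalising factors in \eqref{eq:X tilde} telescope,
\[
X_{n,k}-\wt X_{n,m_n,k}=w_n^{2/\alpha}\sum_{0<i_1<i_2,\ i_2>m_n}\Gamma_{i_1}^{-1/\alpha}\Gamma_{i_2}^{-1/\alpha}\inddd{k\in R_{n,i_1,i_2}},
\]
to fix a large integer $m$ depending only on $\alpha$, and to split this sum according to the smaller index: $\Sigma^{\rm A}_k$ collects the pairs with $1\le i_1\le m$, $\Sigma^{\rm B}_k$ those with $m<i_1\le m_n$, and $\Sigma^{\rm C}_k$ those with $m_n<i_1<i_2$ (in all three $i_2>m_n$). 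Since all the quantities are nonnegative, it suffices to prove $\proba\big(\tfrac{w_n^{2/\alpha}}{b_n}\max_{1\le k\le d_n}\Sigma^\ast_k>\epsilon\big)=o(k_n^{-1})$ for each $\ast\in\{{\rm A,B,C}\}$.

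For $\ast\in\{{\rm B,C}\}$ I would use crude first-moment bounds. Bounding $\max_{k\le d_n}\inddd{k\in R_{n,i_1,i_2}}$ by $\inddd{R_{n,i_1,i_2}\cap\{1,\dots,d_n\}\ne\emptyset}$ and applying Markov's inequality reduces matters to $\tfrac{w_n^{2/\alpha}}{\epsilon b_n}\sum_{\text{pairs}}\esp[\Gamma_{i_1}^{-1/\alpha}\Gamma_{i_2}^{-1/\alpha}]\,\proba(R_{n,i_1,i_2}\cap\{1,\dots,d_n\}\ne\emptyset)$. Using $\esp[\Gamma_{i_1}^{-1/\alpha}\Gamma_{i_2}^{-1/\alpha}]\le C i_1^{-1/\alpha}i_2^{-1/\alpha}$ for $i_1>m$ (valid once $m$ is large, by \citep[inequality (3.2)]{samorodnitsky89asymptotic}), $\proba(R_{n,i_1,i_2}\cap\{1,\dots,d_n\}\ne\emptyset)\le Cd_n/w_n^2$ (from Lemma \ref{Lem:conv Q} and exchangeability), and the fact that $1/\alpha>1$ gives $\sum_{i>m}i^{-1/\alpha}\le C$ and $\sum_{i>m_n}i^{-1/\alpha}\le Cm_n^{1-1/\alpha}$, both sums are at most $Cd_nw_n^{2/\alpha-2}m_n^{1-1/\alpha}/b_n$. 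Inserting $b_n^\alpha\sim\tfrac12 n\log n$, $w_n\sim\tfrac{\mathsf C_F}{1-\beta}n^{1-\beta}$ and the lower restriction $m_n\gg w_n/(d_n^\beta\log n)$ from \eqref{eq:m_n clust low}, a direct computation shows that $k_n$ times this bound is $o\big((d_n/n)^{\beta(1/\alpha-1)}(\log n)^{-1}\big)$, hence $\to 0$; so parts B and C are $o(k_n^{-1})$.

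Part A is the crux, because $i_1$ runs over the fixed finite set $\{1,\dots,m\}$ and $\esp[\Gamma_{i_1}^{-1/\alpha}]=\infty$ (as $1/\alpha\ge1$), so first moments are useless. Instead I would condition on $\Gamma_1,\dots,\Gamma_m$. Since $i_1<i_2$ is automatic and the renewal families are independent of the $\Gamma_i$'s, the $i_1$-th summand factors as $\Gamma_{i_1}^{-1/\alpha}\mathsf Z_{n,i_1}$ with
\[
\mathsf Z_{n,i_1}:=\frac{w_n^{2/\alpha}}{b_n}\max_{k\le d_n}\inddd{k\in R_{n,i_1}}\sum_{i_2>m_n}\Gamma_{i_2}^{-1/\alpha}\inddd{k\in R_{n,i_2}}
\]
independent of $\Gamma_{i_1}$, and $\proba(\Gamma_{i_1}<z)\le z$ gives $\proba(\Gamma_{i_1}^{-1/\alpha}\mathsf Z_{n,i_1}>\epsilon/m)\le(m/\epsilon)^\alpha\esp[\mathsf Z_{n,i_1}^\alpha]$. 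Everything then reduces to proving $\esp[\mathsf Z_{n,i_1}^\alpha]=o(d_n/n)$ for each fixed $i_1\le m$, after which summing over the $m$ indices finishes.

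The hard part will be precisely this estimate, and it is where Lemma \ref{lem:5.5'} genuinely differs from its simpler counterpart Lemma \ref{Lem:remainder whole n} (cf.\ Remark \ref{rem:curious}): in \ref{Lem:remainder whole n} only $o(1)$ is needed, and the stronger constraint $m_n\gg w_n^2/(n\log^{1/(1-\alpha)}n)$ makes even the expectation of the remainder, summed over all $n$ indices, tend to zero, whereas here one has a single block of length $d_n$, the much weaker lower bound on $m_n$, and the far sharper target $o(k_n^{-1})$ (a power of $n$ when $\rho<1$). The naive routes to $\esp[\mathsf Z_{n,i_1}^\alpha]$ — Jensen's inequality $\esp[\mathsf Z^\alpha]\le(\esp\mathsf Z)^\alpha$, or expanding $(\max_k(\cdot))^\alpha\le\sum_{k\in R_{n,i_1}\cap\{1,\dots,d_n\}}(\cdot)^\alpha$ and taking the expectation of the inner $i_2$-sum — both lose a spurious factor of order $\log n$ and only yield $O(k_n^{-1})$. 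To close this logarithmic gap I would keep the heavy-tailed $i_2$-sum intact (so that the block exceedance is governed by essentially a single configuration $(i_1,i_2,k)$) and exploit the renewal geometry: the points of $R_{n,i_1}$ lying in $\{1,\dots,d_n\}$ form a renewal set with inter-renewal law $F$ by Lemma \ref{lem:Rm}(ii), which makes the $S_k:=\sum_{i_2>m_n}\Gamma_{i_2}^{-1/\alpha}\inddd{k\in R_{n,i_2}}$ strongly positively correlated along that set and thereby controls $\max_k$ far better than the independent-maxima bound. Combined with the precise intersection asymptotics $\proba(R_{n,1,2}\cap\{1,\dots,d_n\}\ne\emptyset)\sim\mathfrak q_{F,2}d_n/w_n^2$ from Lemma \ref{Lem:conv Q} and $b_n^\alpha\sim\tfrac12 n\log n$, a careful computation of this kind should produce $\esp[\mathsf Z_{n,i_1}^\alpha]\le C\,d_n/(n\log n)$, whence part A is $O\big(d_n/(n\log n)\big)=o(k_n^{-1})$. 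I expect this renewal-geometry estimate — where the assumption \eqref{eq:m_n clust low} enters essentially — to absorb the bulk of the technical effort of Section \ref{sec:truncation single}.
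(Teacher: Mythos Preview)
Your decomposition $\Sigma^{\rm A}+\Sigma^{\rm B}+\Sigma^{\rm C}$ and the first-moment treatment of B and C match the paper. (One technicality in A: your $\mathsf Z_{n,i_1}$ is not independent of $\Gamma_{i_1}$, since the $\Gamma_{i_2}$'s live on the same Poisson sequence; the paper repairs this by the trivial bound $\Gamma_{i_2}^{-1/\alpha}\le(\Gamma_{i_2}-\Gamma_1)^{-1/\alpha}$, after which the needed independence holds.) The genuine gap is in part A. First, you misjudge the size of the obstacle: with $m_n$ near the rate $w_n/(d_n^\beta\log n)$ of \eqref{eq:m_n clust low}, Jensen gives $(\esp\mathsf Z)^\alpha$ of order $(n/d_n)^{(1-\alpha)(1-\beta)}k_n^{-1}$ (this is precisely Remark~\ref{rem:curious}), and your max-by-sum route gives order $d_n^{\beta(1-\alpha)}k_n^{-1}$ --- in both cases a \emph{polynomial} excess over the target $o(k_n^{-1})$, not a logarithmic one. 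Second, the ``positive correlation of $S_k$ along $R_{n,i_1}$'' idea is left as a heuristic; you do not produce an inequality tying $\esp[\max_k S_k^\alpha]$ to a single-site quantity, nor do you exhibit where \eqref{eq:m_n clust low} would enter.

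The paper's route for A is concrete and does not go through $\esp[\mathsf Z^\alpha]$. It splits the $i_2$-range further into $I_{n,3}=\{i_2>w_n\}$, $I_{n,2}=\{w_{n,\delta}<i_2\le w_n\}$ and $I_{n,1}=\{m_n\le i_2\le w_{n,\delta}\}$ with $w_{n,\delta}=\lfloor n^{1-\beta-\delta}\rfloor$. For $I_{n,3}$ one first applies a union bound over $k$ using $\proba(k\in R_{n,1})=1/w_n$ --- this front factor $d_n/w_n$ is the polynomial gain you are missing --- and then Jensen on the remaining sum yields $O(1/(k_n\log n))$. For $I_{n,2}$ the same front factor together with subadditivity $(\sum c_i)^\alpha\le\sum c_i^\alpha$ and $\esp[\wt\Gamma_i^{-1}]\sim i^{-1}$ give $\le C\delta/k_n$, removed by letting $\delta\downarrow0$. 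The piece $I_{n,1}$ is where \eqref{eq:m_n clust low} is actually used: one conditions on $\{R_{n,1}\cap\{1,\dots,d_n\}\ne\emptyset\}$ (probability $r_n\sim C/k_n^{1-\beta}$) and restricts to the event $\Omega_{n,2,\delta}=\{\max_{k\le d_n}\sum_{m<i\le w_{n,\delta}}\inddd{k\in R_{n,1,i}}\le p_\delta\}$, which a crude combinatorial bound shows has probability $1-o(k_n^{-1})$. On $\Omega_{n,2,\delta}$ the inner sum over $I_{n,1}$ is at most $p_\delta\,\wt\Gamma_{m_n}^{-1/\alpha}$; concentration of $\wt\Gamma_{m_n}$ around $m_n$ then gives the contribution $\le C k_n^{-(1-\beta)}\cdot w_n^2/(b_n^\alpha m_n)$, which equals $C k_n^{-1}\cdot w_n/(d_n^\beta m_n\log n)$ up to constants and is $o(k_n^{-1})$ exactly under \eqref{eq:m_n clust low}.
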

\begin{proof}
We introduce
\begin{align*}
A_{n,1}\equiv A_{n,d_n,m_n,1}  & := \frac{w_{n}^{2/\alpha}}{b_n}\max_{k=1,\ldots,d_n}\sum_{1\le i_1 \le m,  m_n<i_2}   \frac{1}{\Gamma_{i_1}^{1/\alpha}\Gamma_{i_2}^{1/\alpha}}\inddd{k\in   R_{n,i_1,i_2}},
\\
A_{n,2}\equiv A_{n,d_n,m_n,2}  & := \frac{w_{n}^{2/\alpha}}{b_n}\max_{k=1,\ldots,d_n}\sum_{m< i_1 \le m_n<i_2}   \frac{1}{\Gamma_{i_1}^{1/\alpha}\Gamma_{i_2}^{1/\alpha}}\inddd{k\in   R_{n,i_1,i_2}},
\\
A_{n,3}\equiv A_{n,d_n,m_n,3} &:=  \frac{w_{n}^{2/\alpha}}{b_n} \max_{k=1,\ldots,d_n}\sum_{  m_n<i_1<i_2 }   \frac{1}{\Gamma_{i_1}^{1/\alpha}\Gamma_{i_2}^{1/\alpha}}\inddd{k\in   R_{n,i_1,i_2}}. 
\end{align*}

This time, we shall prove 
\[
\proba(A_{n,d_n,m_n,i}>\epsilon) = o\pp{k_n\inv}, i=1,2,3.
\]
For $i=2,3$, it is the same argument as before and the only change is replacing $\summ k1n$ by $\summ k1{d_n}$ when bounding the maximum by the sum (see \eqref{eq:A_n,2} and \eqref{eq:A_n,3}). Thus we have this time
\begin{align*}
\esp A_{n,d_n,m_n,2} &\le C\frac{d_n}{b_n}w_n^{2/\alpha-2}m_n^{1-1/\alpha},\\
\esp A_{n,d_n,m_n,3} &\le C\frac{d_n}{b_n}w_n^{2/\alpha-2}m_n^{2-2/\alpha}.
\end{align*}
For these two expectations to go to zero at the rate $o(k_n\inv)$, it suffices to impose $m_n\gg w_n^2\log ^{1-\alpha} n/n$. 

Now we deal with $A_{n,1}$. This time we apply a more refined analysis. We start with
\[
\proba\pp{A_{n,d_n,m_n,1}>\epsilon} \le m\proba\pp{\frac{w_n^{2/\alpha}}{b_n}\frac1{\Gamma_1^{1/\alpha}}\max_{k=1,\dots,d_n}\sum_{i_2>m_n}\frac1{\Gamma_{i_2}^{1/\alpha}}\inddd{k\in R_{n,1,i_2}}>\frac\epsilon m}.
\]
We shall show
\equh\label{eq:hard}
\proba\pp{\frac{w_n^{2/\alpha}}{b_n}\frac1{\Gamma_1^{1/\alpha}}\max_{k=1,\dots,d_n}\sum_{i\in I}\frac1{\wt\Gamma_{i}^{1/\alpha}}\inddd{k\in R_{n,1,i}}>\epsilon} = o(k_n\inv), 
\eque
where $\{\wt\Gamma_i\}_{i\in\N}$ is an independent copy of the sequence $\{\Gamma_i\}_{i\in\N}$, and $I = I_{n,r}, r=1,2,3$ respectively defined below: {choose $\delta\in (0,1-\beta)$ and} set $w_{n,\delta} :=\floor{n^{1-\beta-\delta}}$, and
\begin{align*}
I_{n,1}\equiv I_{n,1,\delta}&:=\{m_n,\dots,w_{n,\delta}\}\\
I_{n,2}\equiv I_{n,2,\delta}&:=\{w_{n,\delta}+1,\dots,w_n\},\\
I_{n,3}&:=\{w_n+1,\dots\}.
\end{align*}

We first have
\begin{align}
\proba &\pp{\frac{w_n^{2/\alpha}}{b_n}\frac1{\Gamma_1^{1/\alpha}}\max_{k=1,\dots,d_n}\sum_{i\in I_{n,3}}\frac1{\wt\Gamma_{i}^{1/\alpha}}\inddd{k\in R_{n,1,i}}>\epsilon} \nonumber\\
&\le \frac{d_n}{w_n}\proba\pp{\frac{w_n^{2/\alpha}}{b_n}\frac1{\Gamma_1^{1/\alpha}}\sum_{i>w_n}\frac1{\wt\Gamma_{i}^{1/\alpha}}\inddd{1\in R_{n,i}}>\epsilon}\nonumber\\
& \le \frac{d_n}{w_n}\proba\pp{\Gamma_1\le \frac{w_n^2}{\epsilon^\alpha b_n^\alpha}\pp{\sum_{i>w_n}\frac1{\wt \Gamma_i^{1/\alpha}}\inddd{1\in R_{n,i}}}^\alpha} \le C \frac{d_nw_n}{b_n^\alpha}\esp\pp{\sum_{i>w_n}\frac1{\wt \Gamma_i^{1/\alpha}}\inddd{1\in R_{n,i}}}^\alpha.\label{eq:union}
\end{align}
Then, using $\esp Z^\alpha\le (\esp Z)^\alpha$ (since $\alpha\in(0,1)$), $\esp \Gamma_i^{-1/\alpha}\le C i^{-1/\alpha}$ for $i$ large enough, {and $\proba(1\in R_{n,i})=1/w_n$} we have
\equh\label{eq:I_{n,3}}
\proba \pp{\frac{w_n^{2/\alpha}}{b_n}\frac1{\Gamma_1^{1/\alpha}}\max_{k=1,\dots,d_n}\sum_{i\in I_{n,3}}\frac1{\wt\Gamma_{i}^{1/\alpha}}\inddd{k\in R_{n,1,i}}>\epsilon}  \le C \frac{d_nw_n}{b_n^\alpha} \frac{w_n^{\alpha-1}}{w_n^\alpha} = \frac{C}{k_n\log n}.
\eque
We next have, following the same step \eqref{eq:union} above, 
\begin{align*}
\proba &\pp{\frac{w_n^{2/\alpha}}{b_n}\frac1{\Gamma_1^{1/\alpha}}\max_{k=1,\dots,d_n}\sum_{i\in I_{n,2,\delta}}\frac1{\wt\Gamma_{i}^{1/\alpha}}\inddd{k\in R_{n,1,i}}>\epsilon} \\
& \le C\frac{d_nw_n}{b_n^\alpha}\esp{ \pp{\sum_{i=w_{n,\delta+1}}^{w_n}\frac1{\wt \Gamma_i^{1/\alpha}}\inddd{1\in R_{n,i}}}^\alpha}
\le C\frac{d_nw_n}{b_n^\alpha}\esp{ \pp{\sum_{i=w_{n,\delta+1}}^{w_n}\frac1{\wt \Gamma_i}\inddd{1\in R_{n,i}}}}\\
& \le C \frac{d_nw_n}{b_n^\alpha}\frac{\delta \log n}{w_n} \le \frac{C \delta }{k_n},
\end{align*}
for some constant $C$ independent from $\delta,n$. This time we used $(\sum_i c_i)^\alpha\le \sum_ic_i^\alpha$ for non-negative $c_i$ and $\alpha\in(0,1)$. So, 
\equh\label{eq:I_{n,2}}
\lim_{\delta\downarrow 0}\limsupn
k_n\proba \pp{\frac{w_n^{2/\alpha}}{b_n}\frac1{\Gamma_1^{1/\alpha}}\max_{k=1,\dots,d_n}\sum_{i\in I_{n,2,\delta}}\frac1{\wt\Gamma_{i}^{1/\alpha}}\inddd{k\in R_{n,1,i}}>\epsilon}= 0.
\eque
Note that the assumption on $m_n$ is not needed in estimating \eqref{eq:hard} with $I = I_{n,3}$ and $I = I_{n,2}$.

Now we look at \eqref{eq:hard} with $i$ restricted to $i\in I_{n,1}$.
First, recalling the renewal model in Section \ref{sec:renewal}, we have {(cf. \eqref{eq:proba intersection-a})}
\[
r_n:=\proba(R_{n,1}\cap\calI_{d_n,1}\ne \emptyset) \sim \mathsf C_F \frac{d_n^{1-\beta}}{n^{1-\beta}}\sim \mathsf C_F \frac{1}{k_n^{1-\beta}}.
\]
On the other hand, for all $\delta>0$ there exists $p_\delta\in\N$ such that for
\[
\Omega_{n,2,\delta}:=\ccbb{\max_{k=1,\dots,d_n}\sum_{i=m+1}^{w_{n,\delta}}\inddd{k\in R_{n,1,i}}\le p_\delta},
\]
we have $\proba(\Omega_{n,2,\delta}^c) = o(k_n\inv)$. Indeed, this follows from
\[
\proba(\Omega_{n,2,\delta}^c) \le d_n \binom{w_{n,\delta}}{p_\delta+1} {\frac1{w_n^{p_\delta+2}}} \le {\frac {d_nw_{n,\delta}^{p_{\delta}+1}}{w_n^{p_\delta+2}}}.
\]
Using that $d_n=O(n/k_n)$, $w_{n,\delta}=O(n^{1-\beta-\delta})$ and $w_n=O(n^{1-\beta})$, the bound above becomes
$$
O\left(\frac{1}{k_n}n^{-\delta(p_{\delta}+1)+\beta}\right)
$$
which is $o(k_n^{-1})$ whenever $p_{\delta}\geq (\beta/\delta-1)_+$.
Then, it follows that
\begin{multline*}
\proba\pp{\frac{w_n^{2/\alpha}}{b_n}\frac1{\Gamma_1^{1/\alpha}}\max_{k=1,\dots,d_n}\sum_{i\in I_{n,1,\delta}}\frac1{\wt\Gamma_{i}^{1/\alpha}}\inddd{k\in R_{n,1,i}}>\epsilon} \\
 \le \proba\pp{\ccbb{\frac{w_n^{2/\alpha}}{b_n}\frac1{\Gamma_1^{1/\alpha}}\max_{k=1,\dots,d_n}\sum_{i\in I_{n,1,\delta}}\frac1{\wt\Gamma_{i}^{1/\alpha}}\inddd{k\in R_{n,1,i}}>\epsilon}\cap\Omega_{n,2,\delta}}+ o(k_n\inv).
\end{multline*}
We bound the first probability on the right-hand side. Since $\Gamma_1$ is exponential, $\proba(\Gamma_1<x)=O(x)$ as $x\to 0$. Hence, that probability  
 is further bounded from above by
\begin{align*}
r_n  \proba &\pp{\ccbb{\frac{w_n^{2/\alpha}}{b_n}\frac1{\Gamma_1^{1/\alpha}}\max_{k=1,\dots,d_n}\sum_{i\in I_{n,1,\delta}}\frac1{\wt\Gamma_{i}^{1/\alpha}}\inddd{k\in R_{n,1,i}}>\epsilon}\cap\Omega_{n,2,\delta}\mmid R_{n,1}\cap\calI_{d_n,1}\ne\emptyset}\\
&\le \frac C{k_n^{1-\beta}} \proba\pp{\Gamma_1\le \frac{w_n^{2}}{\epsilon^\alpha b_n^\alpha}\frac{p_\delta^\alpha}{\wt\Gamma_{m_n}}} \\
&\le \frac C{k_n^{1-\beta}} \proba\pp{\Gamma_1\le \frac{w_n^{2}}{\epsilon^\alpha b_n^\alpha}\frac{p_\delta^\alpha}{\wt\Gamma_{m_n}},\frac{m_n}{\wt\Gamma_{m_n}}\leq 2} + 
\frac C{k_n^{1-\beta}} \proba\left({\frac{m_n}{\wt\Gamma_{m_n}}>2}\right)\\
&\le \frac C{k_n^{1-\beta}} \proba\pp{\Gamma_1\le 2\frac{w_n^{2}}{\epsilon^\alpha b_n^\alpha}\frac{p_\delta^\alpha}{{m_n}}} + 
\frac C{k_n^{1-\beta}} \proba\left(\left|\frac{m_n}{\wt\Gamma_{m_n}}-1\right|>1\right)\\
&
\le C\frac1{k_n^{1-\beta}}\frac{w_n^2}{b_n^\alpha m_n}+C\frac1{k_n^{1-\beta}}\frac{1}{m_n^q},
\end{align*}
where $q$ can chosen in such the way that $\frac1{k_n^{1-\beta}}\frac{1}{m_n^q}=o(k_n^{-1})$; see the bound for $\wt\Gamma_{m_n}$ in \eqref{eq:gamma concentration}.
Therefore, the bound is 
\begin{align*}
\frac{ C}{k_n}\frac{1}{m_n\log n}\frac{w_n}{d_n^{\beta}}+o(k_n^{-1})=o(k_n^{-1}),\label{eq:I_{n,1}}
\end{align*}
whenever $m_n$ satisfies \eqref{eq:m_n clust low}.
(The constant $C$ above depends on $\delta$.)
We have thus proved that, with $m_n$ satisfying \eqref{eq:m_n clust low},
\equh\label{eq:I_{n,1}}
\proba \pp{\frac{w_n^{2/\alpha}}{b_n}\frac1{\Gamma_1^{1/\alpha}}\max_{k=1,\dots,d_n}\sum_{i\in I_{n,1,\delta}}\frac1{\wt\Gamma_{i}^{1/\alpha}}\inddd{k\in R_{n,1,i}}>\epsilon}= o\pp{k_n\inv}.
\eque
Combining \eqref{eq:I_{n,3}}, \eqref{eq:I_{n,2}}, and \eqref{eq:I_{n,1}}, we have proved that $\proba(A_{n,d_n,m_n,1}>\epsilon) = o(k_n\inv)$, under the assumption \eqref{eq:m_n clust low}.
\end{proof}
\begin{Rem}
When $\rho>1-\beta$, the analysis of $A_{n,1}$ in the proof above can be slightly easier as follows. We have
\begin{align}
\proba  \pp{\frac{w_n^{2/\alpha}}{b_n}\frac1{\Gamma_1^{1/\alpha}}\max_{k=1,\dots,d_n}\sum_{i>\wt m_n}\frac1{\wt\Gamma_{i}^{1/\alpha}}\inddd{k\in R_{n,1,i}}>\epsilon} 
&\le C \frac{w_n^2}{b_n^\alpha}\esp\pp{\summ k1{d_n}\sum_{i_2>\wt m_n}\pp{\frac1{\wt\Gamma_{i_2}^{1/\alpha}}\inddd{k\in R_{n,1,i_2}}}}^\alpha\nonumber\\
& \le C \frac{w_n^2}{b_n^\alpha}\pp{\summ k1{d_n}\sum_{i_2>\wt m_n}\esp\pp{\frac1{\wt\Gamma_{i_2}^{1/\alpha}}\inddd{k\in R_{n,1,i_2}}}}^\alpha\nonumber\\
& \le C \frac{w_n^2}{b_n^\alpha}\pp{\frac{d_n\wt m_n^{1-1/\alpha}}{w_n^2}}^\alpha\sim  \frac C{k_n} \frac{w_n^{2-2\alpha}\wt m_n^{\alpha-1}d_n^{\alpha-1}}{\log n},\label{eq:I_{n,3}'}
\end{align}
which is $o(k_n\inv)$ if 
\[
\wt m_n \gg \frac{w_n^2}{d_n}\frac1{\log^{1/(1-\alpha)}n}.
\]
In particular, when $\rho>1-\beta$, taking $\wt m_n = w_{n,\delta}\sim n^{1-\beta-\delta}\gg w_n^2/(d_n\log ^{1/(1-\alpha)}n)$ (which can be achieved by taking $\delta\in(0,\rho-(1-\beta))$), the upper bound \eqref{eq:I_{n,3}'} is of order $o(k_n\inv)$, and this upper bound can replace \eqref{eq:I_{n,2}} and \eqref{eq:I_{n,3}}.
\end{Rem}

\begin{Rem}\label{rem:curious}
For $A_{n,1}$, the same argument as around \eqref{eq:A_n,1} would end up short. Indeed, this time with
\[
Z_{n,d_n,m_n}:=\frac{w_{n}^{2/\alpha}}{b_n}\max_{k=1,\ldots,d_n}  \sum_{ i>m_n} \frac{ 1}{(\Gamma_{i}-\Gamma_1)^{1/\alpha}}  \inddd{k\in R_{n,1,i}},
\]
we have that
\begin{align*}
\proba\pp{A_{n,d_n,m_n,1}>\epsilon}&\le m\proba\pp{\Gamma_1^{-1/\alpha}Z_{n,d_n,m_n}>\frac\epsilon m}\le C (\esp Z_{n,d_n,m_n})^\alpha\\
& \le C\pp{\frac{d_n}{b_n}w_n^{2/\alpha-2}m_n^{1-1/\alpha}}^\alpha.
\end{align*}
Therefore, equivalently we need 
\[
m_n\gg \frac{w_n^2}{d_n}\frac1{\log^{1/(1-\alpha)}n}.
\]
But this condition is too restrictive to be useful.
\end{Rem}

\begin{proof}[Proof of Proposition \ref{prop:truncation}]
{Recall the notation from \eqref{eq:nu*} and \eqref{eq:nu_tilde}.} It suffices to prove
\[
\limn|{\vv\nu}_{d_n}(H)-\wt{{\vv\nu}}_{d_n,m_n}(H)| = 0 \qmand
\limn|\wt{{\vv\nu}}_{d_n,m_n}(H)-{\vv\nu}_{d_n,m_n}^*(H)| = 0.
\]

Write
\[
\vvX_{d_n}\equiv (X_{n,1},\dots,X_{n,d_n}). 
\]

We shall need the following estimates:
\begin{align}
\proba\pp{\frac1{b_n}\nn{\vv X_{d_n}-\wt {\vv X}_{d_n,m_n}}_\infty>\epsilon} = o(k_n\inv),\label{eq:1'}\\
\proba\pp{\frac1{b_n}\nn{\vvX^*_{d_n,m_n}-\wt {\vv X}_{d_n,m_n}}_\infty>\epsilon} = o(k_n\inv),\label{eq:2'}\\
\proba\pp{\frac1{b_n}\nn{\vv X_{d_n}}_\infty>\epsilon}\sim \proba\pp{\frac1{b_n}\nn{\wt{\vv X}_{d_n,m_n}}_\infty>\epsilon}\sim \proba\pp{\frac1{b_n}\nn{\vv X^*_{d_n,m_n}}_\infty>\epsilon} = O(k_n\inv),\label{eq:3'}
\end{align}
The estimate \eqref{eq:1'} is established in Lemma \ref{lem:5.5'}, and \eqref{eq:2'} by a similar argument as in Lemma \ref{lem:2'} exploiting the strong law of large numbers for Poisson arrivals, and we omit the details. Then, the probability concerning $\vv X^*_{d_n,m_n}$ in \eqref{eq:3'} is established in Proposition \ref{prop:de-agg}, and the remaining estimates on the two other probabilities in \eqref{eq:3'} follow from the previous estimates.

Now we prove the desired statement. We start with the first part. Consider $H\in\Clipb(S)$, such that  $H(\vv x)=\vv 0$ for all $\| \vv x \|_\infty\le \delta$ for some $\delta>0$.
  Then,    we have   for any $\epsilon>0$,  for $n$ large enough,
\begin{align*}
& |\vv{{\vv\nu}_{d_n}}(H)- \wt{{\vv\nu}}_{d_n,m_n}(H)| \le  k_n \esp    \left| H\pp{\frac{\vv X_{d_n}}{b_n}}- H\pp{\frac{\wt{\vv X}_{d_n,m_n}}{b_n}} \right|   \\
& = k_n \esp    \pp{ \left| H\pp{\frac{\vv X_{d_n}}{b_n}}- H\pp{\frac{\wt{\vv X}_{d_n,m_n} }{b_n}} \right|  ;  \frac{\snn{\vv X_{d_n}   - \wt{\vv X}_{d_n,m_n} }_\infty}{b_n}\le \epsilon  }\\
& \quad + k_n \esp    \pp{ \left| H\pp{\frac{\vv X_{d_n}}{b_n}}- H\pp{\frac{\wt{\vv X}_{d_n,m_n}}{b_n}} \right|  ;  \frac{\|\vv X_{d_n}   - \wt{\vv X}_{d_n,m_n}  \|_\infty}{b_n}>\epsilon  }.
\end{align*}
We bound the first term using the Lipschitz property of $H$, and excluding the event $\max\{\snn{\vv X_{d_n}/b_n}_\infty,\snn{\wt{\vv X}_{d_n}/b_n}_\infty\}\le \delta$ on which the term vanishes. For the second term, we bound it simply by the tail probability using the boundedness of $H$. So we arrive at
\begin{align*}
 |{\vv\nu}_{d_n}(H)- \wt{{\vv\nu}}_{d_n,m_n}(H)| &
 \le       \epsilon C  k_n \pp{ \proba\pp{\frac1{b_n} \|\vv{X}_{d_n,m_n}\|_\infty>\delta} + \proba\pp{\frac1{b_n}\nn{\wt{\vv{X}}_{d_n,m_n}}_\infty>\delta } } \\
 & \quad +  C k_n \proba\left(  \frac1{b_n}\nn{\vv X_{d_n}   - \wt{\vv X}_{d_n,m_n}  }_\infty>\epsilon  \right)   \\
& \le  C \epsilon  +o(1),
\end{align*}
where in the last line the constant $C>0$ does not depend on $\epsilon$ or $n$, and we have used \eqref{eq:1'} and \eqref{eq:3'}.
  The desired result now follows by letting $\epsilon\downarrow 0$.

Next, we show the second part. 
We can then follow a similar decomposition as above, and then this time apply \eqref{eq:2'} and \eqref{eq:3'}. In summary we have
\[
\abs{\wt{\vv\nu}_{d_n,m_n}(H) - {\vv\nu}_{d_n,m_n}^*(H)}
  \le C\epsilon + Ck_n\proba\left( \frac1{b_n} \nn{\vv X_{d_n,m_n}^*   - \wt{\vv X}_{d_n,m_n}   }_\infty>\epsilon  \right)\le C\epsilon + o(1).
\]
This completes the proof.
\end{proof}

With $d_n$ as a constant, we also have a corresponding result.
\color{black}
\begin{Lem}\label{lem:d fixed}
Fix $d\in\N$. Then, 
\[
\proba\pp{\frac1{b_n}\max_{k=1,\dots,d}X_k> x}\sim \pp{\summ j0{d-1}\proba(\Theta_k = 0, k=1,\dots,j)}\proba\pp{\frac{X_0}{b_n}>x},
\] 
as $n\to\infty$.
\end{Lem}
To compare with Theorem \ref{thm:2'}, note that as $d\to\infty$,  
\[
\summ j0{d-1}\proba(\Theta_k = 0, k=1,\dots,j) \sim \mathsf q_{F,2}d \equiv 
\vartheta_0 d,
\]
where when $j=0$, the probability in the summand above is understood as 1.
\begin{proof}
We write 
\begin{align*}
\proba \pp{\max_{k=1,\dots,d}X_{k}>x} 
& 
= \summ j1d \proba\pp{X_{j}>x, \max_{k'=j+1,\dots,d}X_{k'}<x}\\
& = \summ j1d \proba\pp{ \max_{k'=j+1,\dots,d}X_{k'}<x\mmid X_{j}> x}\proba(X_{j}>x).
\end{align*}
The desired result now follows from the convergence of the tail process in \eqref{eq:micro}.
\end{proof}
\appendix
\section{Topological background}\label{sec:topo}
Let $\ell_0$ be the space of real-valued sequences $\vv x=\pp{x_n}_{n\in \Z}$ satisfying
$\lim_{n\rightarrow\pm\infty} x_n=0$. We endow $\ell_0$   with the sup-norm $\|\cdot\|_\infty$  making it a Banach space. We will also work with the punctured space $\ell_0\setminus\{\vv 0\}$, where    $\vv 0$ denotes the sequence of zeros.    The  boundedness ${\cal{B}}_0$ (i.e., the class of bounded sets) of $\ell_0\setminus\{\vv 0\}$ is formed by subsets $A$ of $\ell_0\setminus\{\vv 0\}$ satisfying $\inf_{\vv x\in A} \|\vv x\|_\infty>0$.

Recall $\wt{\ell}_0$ is  the space of equivalent classes of $\ell_0$ (see Section \ref{sec:main}). We use $[\vv{x}] \in \wt{\ell}_0 $ to denote the equivalent class represented by $\vv x\in \ell_0$. Define a metric on $\wt{\ell}_0$ as  $\wt{d}([\vv x],[\vv y])=\inf_{\vv{x}'\in [\vv{x}],\vv{y}'\in [\vv{y}]} \|\vv x'- \vv y'\|_\infty$,  where $[\vv x]$, $ [\vv y]\in \wt{\ell}_0$. It is known that $(\wt{\ell}_0,\wt{d})$ is separable and complete (see \cite{basrak18invariance}).
 
 A Borel measure $\mu$  on $\ell_0$ induces a Borel measure on $\wt{\ell}_0$  via the measurable mapping $\vv{x}\mapsto [ \vv x]$, which we still denote using the same notation $\mu$. Note that the induced $\mu$ on $\wt{\ell}_0$ can be identified as the original $\mu$ on $\ell_0$ when it is restricted to shift-invariant Borel subsets  of $\ell_0$, or equivalently, to shift-invariant  non-negative or integrable Borel measurable functions on $\ell_0$.     Similar convention applies to the subspace $S = \wt \ell_0\setminus\{[\vv0]\}$.

We shall often work with the subspace 
\[
 S = \wt \ell_0\setminus\{[\vv0]\},
\]
 whose boundedness $\wt{\cal{B}}_0$  is formed by subsets $A$ of $S$ satisfying $\inf_{[\vv x]\in A}\wt{d}([\vv x],[\vv 0])>0$.  
Let $\cal{M}_{\wt{\cal{B}}_0}=\cal{M}_{\wt{\cal{B}}_0}(S)$ denote the space of $\wt{\cal{B}}_0$-boundedly finite measures (i.e., each $\mu\in \cal{M}_{\wt{\cal{B}}_0}$ satisfies $\mu(A)<\infty$ for any Borel $A\in \wt{\cal{B}}_0$) on $S$.
We then equip $\cal{M}_{\wt{\cal{B}}_0}$  with the so-called vague$^\#$ topology, the smallest topology that ensures that the mappings $\mu\mapsto \mu(f)$ are continuous for all bounded continuous $f$ on $S$ with $\wt{\cal{B}}_0$-bounded support, making  $\cal{M}_{\wt{\cal{B}}_0}$  a Polish space. The convergence under the vague$^\#$ topology is called the vague$^\#$ convergence, or simply the vague convergence, denoted as \[\mu_n\overset{v^\#}{\longrightarrow} \mu\]
  as $n\rightarrow\infty$ for  $\mu_n,\mu \in \cal{M}_{\wt{\cal{B}}_0}$, $n\in \N$. This convergence is also characterized by $\mu_n(f)\rightarrow \mu(f)$ for all $f\in \Clipb(S)$, where $\Clipb(S)$ stands for all Lipchitz continuous functions (with respect to the metric $\wt{d}$) with bounded support on $S$.
  See \cite{basrak19note}, \cite[Appendix B.1]{kulik20heavy}, \cite[Section 4.1]{kallenberg17random} for more details about vague$^\#$ topology and convergence.  

\section{Some estimates}
Here we follow the notation in Section \ref{sec:renewal}.
\begin{Lem}\label{lem:rho}
For all increasing sequences $\{\ell_n\}_{n\in\N},\{\ell_n'\}_{n\in\N}$ of integers such that $\ell_n,\ell_n'\to\infty, \ell_n/\ell_n'\to\infty$, there exists a constant $C>0$ such that
\begin{align*}
\rho_{\ell_n,\ell_n'}\topp1 & :=\sum_{ 1\le j_1, j_2\le \ceil{\ell_n/\ell_n'}}  \proba(R_{\ell_n,1,2}\cap \cal{I}_{\ell_n',j_1} \neq \emptyset, \ R_{\ell_n,1,3}\cap \cal{I}_{\ell_n',j_2} \neq \emptyset   )\le  \frac{C \ell_n^{1+\beta}}{w_{\ell_n}^{3}},\\
\rho_{\ell_n,\ell_n'}\topp2& :=\sum_{1\le j_1,j_2\le \ceil{\ell_n/\ell_n'}, j_1\neq j_2} \proba(R_{\ell_n,1,2}\cap \cal{I}_{\ell_n',j_1} \neq \emptyset, \ R_{\ell_n,1,2}\cap \cal{I}_{\ell_n',j_2} \neq \emptyset   )\le \frac{C \ell_n(\ell_n')^{2\beta-1} }{w_{\ell_n}^2}.
\end{align*}
\end{Lem}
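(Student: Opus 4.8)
The plan is to bound the two sums by reducing each probability to counting pairs of renewal points, exactly as in the ad hoc estimates already carried out inside the proofs of Lemma \ref{lem:5.1'} and Proposition \ref{prop:5.3} (around \eqref{eq:bn2 bound}), and then to organize the double sum over blocks $j_1,j_2$ so that the contributions telescope into a single geometric-type factor. Throughout I would write $K_n:=\ceil{\ell_n/\ell_n'}$ and recall from Lemma \ref{lem:Rm} that $\proba(k\in R_{\ell_n,i})=1/w_{\ell_n}$ for $k\le \ell_n$, that the $R_{\ell_n,i}$ are i.i.d., and that each has the renewal/Markov property; also recall $u(n)=\proba(n\in\vv\tau\topp{1,0})\sim n^{\beta-1}/(\mathsf C_F\Gamma(\beta)\Gamma(1-\beta))$ from \eqref{eq:u(n) asymp}, so that $\summ m0N u(m)\le C N^\beta$, and $w_{\ell_n}\sim \tfrac{\mathsf C_F}{1-\beta}\ell_n^{1-\beta}$.

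For $\rho\topp1_{\ell_n,\ell_n'}$: the events $\{R_{\ell_n,1,2}\cap\calI_{\ell_n',j_1}\ne\emptyset\}$ and $\{R_{\ell_n,1,3}\cap\calI_{\ell_n',j_2}\ne\emptyset\}$ share only the factor $R_{\ell_n,1}$, while $R_{\ell_n,2},R_{\ell_n,3}$ are independent of each other and of $R_{\ell_n,1}$. Hence, bounding an intersection-nonempty event by the union over its points and conditioning on $R_{\ell_n,1}$,
\[
\proba(R_{\ell_n,1,2}\cap\calI_{\ell_n',j_1}\ne\emptyset,\ R_{\ell_n,1,3}\cap\calI_{\ell_n',j_2}\ne\emptyset)\le \sum_{k\in\calI_{\ell_n',j_1}}\sum_{k'\in\calI_{\ell_n',j_2}}\proba(k\in R_{\ell_n,1})\,\proba(k\in R_{\ell_n,2})\,\proba(k'\in R_{\ell_n,3})\,\ind\{k'\ge k\}\,u(k'-k)+(\text{sym.}),
\]
where the extra factor $u(k'-k)$ comes from the renewal property of $R_{\ell_n,1}$ once we know $k\in R_{\ell_n,1}$ (and a symmetric term handles $k\ge k'$), using $\proba(k'\in R_{\ell_n,1}\mid k\in R_{\ell_n,1})\le u(k'-k)$ — this is essentially the computation already done right before \eqref{eq:bn2 bound}. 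Each of $\proba(k\in R_{\ell_n,i})$ is $1/w_{\ell_n}$, giving a factor $w_{\ell_n}^{-3}$; summing $u(k'-k)$ over all $k<k'\le \ell_n$ with $k\le \ell_n$ gives $\sum_{k=1}^{\ell_n}\sum_{m=0}^{\ell_n}u(m)\le C\ell_n\cdot\ell_n^{\beta}=C\ell_n^{1+\beta}$, and this dominates the whole double sum over $j_1,j_2$ since those just re-partition the range of $(k,k')$. This yields $\rho\topp1_{\ell_n,\ell_n'}\le C\ell_n^{1+\beta}/w_{\ell_n}^3$.

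For $\rho\topp2_{\ell_n,\ell_n'}$ all three appearances of $R$ collapse to the \emph{same} pair $R_{\ell_n,1}\cap R_{\ell_n,2}$, so I must keep the two points $k\in\calI_{\ell_n',j_1}$, $k'\in\calI_{\ell_n',j_2}$ in \emph{both} $R_{\ell_n,1}$ and $R_{\ell_n,2}$; by independence of $R_{\ell_n,1},R_{\ell_n,2}$ this gives a factor $\big[\proba(k\in R_{\ell_n,1})\proba(k'\in R_{\ell_n,1}\mid k\in R_{\ell_n,1})\big]^2\le (u(|k'-k|)/w_{\ell_n})^{\,2}\cdot$(a harmless bounded factor), i.e. schematically $w_{\ell_n}^{-2}u(|k'-k|)^2$ after also using $\proba(k\in R_{\ell_n,1})=1/w_{\ell_n}$ once more. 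Since $j_1\ne j_2$ the blocks are disjoint, so $|k'-k|\ge \ell_n'-\ell_n'=\cdots$; more precisely if $|j_1-j_2|=s\ge1$ then $|k'-k|\ge (s-1)\ell_n'$, and $u$ is regularly varying of index $\beta-1<0$, so $u(|k'-k|)^2\le C((s-1)\ell_n'\vee1)^{2(\beta-1)}$. Summing over the $\ell_n'\times\ell_n'$ pairs $(k,k')$ within a fixed pair of blocks gives $(\ell_n')^2\cdot C((s\ell_n')\vee1)^{2(\beta-1)}$ for $s\ge1$, and then summing over $s=1,\dots,K_n$ (with multiplicity $\le 2K_n$ for the pairs at lag $s$) yields, after pulling out $(\ell_n')^{2\beta}$ and using $\summ s1\infty s^{2\beta-2}<\infty$ (valid since $\beta<1/2$), a bound $C\,K_n(\ell_n')^{2\beta}=C(\ell_n/\ell_n')(\ell_n')^{2\beta}=C\ell_n(\ell_n')^{2\beta-1}$; dividing by the $w_{\ell_n}^2$ from the common-pair factor gives $\rho\topp2_{\ell_n,\ell_n'}\le C\ell_n(\ell_n')^{2\beta-1}/w_{\ell_n}^2$, as claimed. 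The main obstacle is the bookkeeping in this second bound — tracking that the two points land in \emph{the same} intersection $R_{\ell_n,1}\cap R_{\ell_n,2}$ (not two independent intersections), correctly using the renewal estimate $\proba(k'\in R_{\ell_n,1}\mid k\in R_{\ell_n,1})\le u(|k'-k|)$ in both factors, and verifying that the summability $\sum_s s^{2\beta-2}<\infty$ (which is exactly where $\beta<1/2$ enters) makes the lag-$s$ contributions summable so that the leading term is the $s=\Theta(1)$ one giving the stated power $(\ell_n')^{2\beta-1}$; the convergence $\ell_n/\ell_n'\to\infty$ is used only to guarantee $K_n\to\infty$ so that edge effects from the incomplete block $\calI_{\ell_n',K_n}$ are negligible.
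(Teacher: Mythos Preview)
The paper's own proof is a one-line citation of \cite[Lemma~4.7]{bai24phase} (with a minor erratum noted), so your direct argument is not competing with an in-paper derivation. Your treatment of $\rho^{(1)}_{\ell_n,\ell_n'}$ is correct and is exactly the point-counting bound already used just before \eqref{eq:bn2 bound}: the union bound over $(k,k')$, the factor $w_{\ell_n}^{-3}$ from three independent renewals, and $\sum_{m\le\ell_n}u(m)\le C\ell_n^\beta$ combine to the stated estimate.

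For $\rho^{(2)}_{\ell_n,\ell_n'}$, however, there is a real gap at the adjacent-block case $s=1$. Your pointwise bound $u(|k'-k|)^2\le C((s-1)\ell_n'\vee1)^{2(\beta-1)}$ gives only $C$ when $s=1$, so multiplying by the $(\ell_n')^2$ pairs yields $C(\ell_n')^2$ per adjacent block-pair; after the $\le 2K_n$ multiplicity this is $C\ell_n\ell_n'/w_{\ell_n}^2$, which exceeds the target $C\ell_n(\ell_n')^{2\beta-1}/w_{\ell_n}^2$ by a factor $(\ell_n')^{2-2\beta}\to\infty$. The silent replacement of $(s-1)\ell_n'\vee1$ by $s\ell_n'$ is precisely what fails here. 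The repair is to sum directly for adjacent blocks: with $m=k'-k\in\{1,\dots,2\ell_n'-1\}$ and multiplicity $\min(m,2\ell_n'-m)$,
\[
\sum_{k\in\calI_{\ell_n',j}}\sum_{k'\in\calI_{\ell_n',j+1}}u(k'-k)^2
\le C\sum_{m=1}^{\ell_n'}m\cdot m^{2\beta-2}+C(\ell_n')^2(\ell_n')^{2\beta-2}
\le C(\ell_n')^{2\beta},
\]
using $2\beta-1\in(-1,0)$. With this bound for $s=1$ in hand, your treatment of $s\ge2$ (where $(s-1)\ell_n'\asymp s\ell_n'$) together with $\sum_{s\ge1}s^{2\beta-2}<\infty$ correctly gives $\rho^{(2)}_{\ell_n,\ell_n'}\le CK_n(\ell_n')^{2\beta}/w_{\ell_n}^2\le C\ell_n(\ell_n')^{2\beta-1}/w_{\ell_n}^2$.
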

\begin{proof}
The above follows from \cite[Lemma 4.7]{bai24phase}. 
Therein, the proof is for $\ell_n = n$ only, but can be easily extended for the above. Note that the upper bound in the last case of \citep[(4.28)]{bai24phase} should be $C nd_n^{\beta_p}\log d_n/w_n^p$, where the factor $\log d_n$ can be replaced by 1 if $\beta_p \ne -1$. The proof was all correct until right before (4.31) therein, where $d_n^{-(|\beta_p|\vee1)}$ should be $d_n^{-(|\beta_p|\wedge 1)}$.
\end{proof}
\begin{Lem}\label{Lem:conv Q}
Let $\ell_n,\ell_n'$ be two sequences of integers increasing to infinity and satisfying  {$\ell_n'\le \ell_n$.} 
\begin{enumerate}[(i)]
\item{ For all $j=1,\dots,\floor{\ell_n/\ell_n'}$, $R_{\ell_n,1}\cap \calI_{\ell_n',j}$ has the same law. Furthermore
\equh\label{eq:proba intersection-a}
\proba\pp{R_{\ell_n,1}\cap \calI_{\ell_n',1}\ne\emptyset}\sim \mathsf C_F\frac{(\ell_n')^{1-\beta}}{w_{\ell_n}} \mmas n\to\infty.
\eque
}
\item For all $j=1,\dots,\floor{\ell_n/\ell_n'}$, $R_{\ell_n,1,2}\cap \calI_{\ell_n',j}$ has the same law. More precisely, we have
\equh\label{eq:proba intersection}
\proba\pp{R_{\ell_n,1,2}\cap \calI_{\ell_n',1}\ne\emptyset}\sim \frac{\ell_n'}{w_{\ell_n}^2}\mathsf q_{F,2} \mmas n\to\infty.
\eque
\item  {Recall $\vv Q_{\ell_n,\ell_n',j}^{(1,2)}=\pp{\inddd{k\in  R_{\ell_n,i_1,i_2}\cap \calI_{\ell_n',j}}}_{k\in\Z}$ from \eqref{eq:Q emp}}. For all $j=1,\dots,\floor{\ell_n/\ell_n'}$, $B\in\calB(S)$ (the Borel $\sigma$-algebra of $S$), $\proba\spp{\vv Q_{\ell_n,\ell_n',j}^{(1,2)}\in B \mid R_{\ell_n,1,2}\cap \cal{I}_{\ell_n',j}\neq \emptyset}$ has the same value. Moreover, we have the total-variation   convergence
\equh\label{eq:Q uniform}
\limn\sup_{B\in \cal{B}(S)} \left| \proba\pp{\vv Q_{\ell_n,\ell_n',1}^{(1,2)}\in B \mmid R_{\ell_n,1,2}\cap \cal{I}_{\ell_n',1}\neq \emptyset}-\proba\pp{\vv Q\in B} \right|=0.
\eque
\end{enumerate}
\end{Lem}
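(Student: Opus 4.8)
\emph{Overview and shift invariance.} The plan is to reduce all three items to the case $j=1$, and then to isolate a single Palm/renewal fact from which the asymptotics follow by a routine truncation argument. For the shift–invariance halves, fix $2\le j\le\floor{\ell_n/\ell_n'}$ and put $k=(j-1)\ell_n'+1\in\{2,\dots,\ell_n\}$. By Lemma~\ref{lem:Rm}(i), $R_{\ell_n,1}\cap\{k,k+1,\dots\}-(k-1)\eqd R_{\ell_n,1}$, and since $R_{\ell_n,1},R_{\ell_n,2}$ are i.i.d.\ the same holds with $R_{\ell_n,1}$ replaced by $R_{\ell_n,1,2}:=R_{\ell_n,1}\cap R_{\ell_n,2}$; intersecting both sides with $\{1,\dots,\ell_n'\}$ gives $(R_{\ell_n,1}\cap\calI_{\ell_n',j})-(j-1)\ell_n'\eqd R_{\ell_n,1}\cap\calI_{\ell_n',1}$ and likewise for the double intersection (the case $j=1$ is trivial). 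Since $\vv Q_{\ell_n,\ell_n',j}^{(1,2)}$, viewed in $S$, is a functional of $(R_{\ell_n,1,2}\cap\calI_{\ell_n',j})-(j-1)\ell_n'$, the conditional law in (iii) is independent of $j$ as well. So below I work with $j=1$ only.

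\emph{A Palm identity.} The one preliminary fact I would establish is: for a generic $R_n\sim\wt\mu_n$, any $1\le k\le n$, and any measurable $A$,
\[
\proba\pp{R_n\in A\mmid k\in R_n}=\proba_{\wt\mu^*}\pp{F\in A\mmid k\in F},
\]
which is immediate from the density $d\wt\mu_n/d\wt\mu^*$ because $\{k\in F\}\subset\{F\cap\{1,\dots,n\}\ne\emptyset\}$; and under $\wt\mu^*(\,\cdot\mid k\in F)$ the trajectory to the right of $k$ is an ordinary renewal with increment law $F$ started afresh at $k$, independent of its restriction to $\{1,\dots,k-1\}$ (the Palm structure of the stationary renewal $\wt\mu^*$; cf.\ the two-sided description in \cite[Section~2.3]{bai23tail}). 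Applying this to $R_{\ell_n,1}$ and $R_{\ell_n,2}$ and using their independence, one obtains: conditionally on $k\in R_{\ell_n,1,2}$ with $k\le\ell_n$, the shifted set $\pp{R_{\ell_n,1,2}\cap\{k,k+1,\dots\}}-k$ is distributed as $\vv\tau\topp1\cap\vv\tau\topp2$ (recall \eqref{eq:Theta_k}) and is independent of $R_{\ell_n,1,2}\cap\{1,\dots,k-1\}$.

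\emph{Items (i) and (ii).} For (i), the density of $\wt\mu_{\ell_n}$ and the inclusion above give the exact identity $\proba\pp{R_{\ell_n,1}\cap\calI_{\ell_n',1}\ne\emptyset}=w_{\ell_n'}/w_{\ell_n}$, from which \eqref{eq:proba intersection-a} follows by \eqref{eq:F}. For (ii), Lemma~\ref{lem:Rm}(i) and independence give $\proba(k\in R_{\ell_n,1,2})=w_{\ell_n}^{-2}$ for $1\le k\le\ell_n$; decomposing $\{R_{\ell_n,1,2}\cap\calI_{\ell_n',1}\ne\emptyset\}$ according to the largest element $k$ of $R_{\ell_n,1,2}$ in $\calI_{\ell_n',1}$ and using the Palm identity,
\[
\proba\pp{R_{\ell_n,1,2}\cap\calI_{\ell_n',1}\ne\emptyset}=\summ k1{\ell_n'}\proba(k\in R_{\ell_n,1,2})\,g_{\ell_n'-k}=w_{\ell_n}^{-2}\summ i0{\ell_n'-1}g_i,
\]
where $g_L:=\proba\pp{\vv\tau\topp1\cap\vv\tau\topp2\cap\{1,\dots,L\}=\emptyset}$ decreases to $g_\infty=\proba\pp{\sabs{\vv\tau\topp1\cap\vv\tau\topp2}=1}=\mathfrak q_{F,2}$. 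Hence $\summ i0{\ell_n'-1}g_i=\mathfrak q_{F,2}\ell_n'+\summ i0{\ell_n'-1}(g_i-\mathfrak q_{F,2})=\mathfrak q_{F,2}\ell_n'+o(\ell_n')$ by a Ces\`aro argument, and \eqref{eq:proba intersection} follows.

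\emph{Item (iii) and the main obstacle.} For (iii), condition on $R_{\ell_n,1,2}\cap\calI_{\ell_n',1}\ne\emptyset$ and let $k_0:=\min\pp{R_{\ell_n,1,2}\cap\calI_{\ell_n',1}}$; as an element of $S$, $\vv Q_{\ell_n,\ell_n',1}^{(1,2)}$ is the class of $\pp{\pp{R_{\ell_n,1,2}\cap\{k_0,k_0+1,\dots\}}-k_0}\cap\{0,\dots,\ell_n'-k_0\}$. Since $\{k_0=k\}=\{k\in R_{\ell_n,1,2}\}\cap\{R_{\ell_n,1,2}\cap\{1,\dots,k-1\}=\emptyset\}$ and, by the Palm identity, the forward set is independent of the ``past'' event, one gets for every Borel $B\subset S$ that $\proba\pp{\vv Q_{\ell_n,\ell_n',1}^{(1,2)}\in B\mmid k_0=k}=\proba\pp{\bb{(\Theta_i)_{0\le i\le\ell_n'-k}}\in B}$, where $\bb{\,\cdot\,}$ denotes the finite vector padded by zeros and passed to $S$. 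Because $\vv\tau\topp1\cap\vv\tau\topp2$ is a.s.\ finite, $\bb{(\Theta_i)_{0\le i\le L}}=\vv Q$ once $L\ge\max\pp{\vv\tau\topp1\cap\vv\tau\topp2}$ (recall $\vv Q=\vv\Theta$ by \eqref{eq:Q = Theta}), so the left side above differs from $\proba(\vv Q\in B)$ by at most $\varepsilon_{\ell_n'-k}$, \emph{uniformly in} $B$, where $\varepsilon_L:=\proba\pp{\max\pp{\vv\tau\topp1\cap\vv\tau\topp2}>L}\downarrow0$. Averaging over $k_0$, the left side of \eqref{eq:Q uniform} is at most $\esp\bb{\varepsilon_{\ell_n'-k_0}\mmid R_{\ell_n,1,2}\cap\calI_{\ell_n',1}\ne\emptyset}$; and since $\proba(k_0=k\mid\,\cdot\,)\le\proba(k\in R_{\ell_n,1,2})/\proba\pp{R_{\ell_n,1,2}\cap\calI_{\ell_n',1}\ne\emptyset}\sim(\mathfrak q_{F,2}\ell_n')^{-1}$ uniformly in $k$ by (ii), one has $\proba(\ell_n'-k_0\le M\mid\,\cdot\,)=O(M/\ell_n')\to0$ for each fixed $M$, so $\ell_n'-k_0\to\infty$ in probability under the conditional law, and dominated convergence yields \eqref{eq:Q uniform}. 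The hard part will be the Palm/renewal step: verifying that $\vv Q_{\ell_n,\ell_n',1}^{(1,2)}$ depends, modulo shift, only on the trajectory to the right of $k_0$ and that this trajectory is a bona fide (truncated) copy of $\vv\tau\topp1\cap\vv\tau\topp2$ independent of the event that $k_0$ is the first hit in the block; everything else reduces to the Ces\`aro/truncation estimates above. A clean alternative would be to run the whole argument on the two-sided renewal representation of \cite[Section~2.3]{bai23tail}, where the Palm structure at a renewal point is built in and the ``past'' is literally an independent coordinate.
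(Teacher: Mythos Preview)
Your proposal is correct and follows essentially the same route as the paper: last/first-entrance decompositions combined with the renewal (Palm) property of Lemma~\ref{lem:Rm}, the Ces\`aro step using $g_L\to\mathfrak q_{F,2}$ for (ii), and the transience of $\vv\tau\topp1\cap\vv\tau\topp2$ plus a truncation argument for (iii). Your treatment of (i) via the exact identity $\proba(R_{\ell_n,1}\cap\calI_{\ell_n',1}\ne\emptyset)=w_{\ell_n'}/w_{\ell_n}$ is slightly cleaner than the paper's decomposition, and the ``hard part'' you flag (the Palm step) is precisely what Lemma~\ref{lem:Rm}(i)--(ii) provides, so no additional work is needed there.
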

\begin{proof}
The shift invariance in both statements follow from Lemma \ref{lem:Rm}. It remains to examine the asymptotics. 
To show \eqref{eq:proba intersection-a},  we have by the last entrance decomposition
\begin{align*}
\proba(\min (R_{\ell_n,1}\cap \calI_{\ell_n',1})\le \ell_n' )&=\frac{1}{w_{\ell_n}} \sum_{k=1}^{\ell_n'}  \wb{F}(\ell_n'-k)=\frac{1}{w_{\ell_n}} \sum_{k=0}^{\ell_n'-1}  \wb{F}(k).
\end{align*}
Now, the asymptotics follows by choosing $m=\ell_n'$ and using $\wb{F}(x)\sim\mathsf C_Fx^{-\beta}$.

To show \eqref{eq:proba intersection},  we have by the last-entrance decomposition: 
\begin{align}\label{eq:min intersect last}
\proba(\min (R_{\ell_n,1,2}\cap \calI_{\ell_n',1})\le \ell_n' )&=\sum_{k=1}^{\ell_n'} \proba\pp{ \max(R_{\ell_n,1,2}\cap \calI_{\ell_n',1})= k }\nonumber\\
&= \frac{1}{w_{\ell_n}^2} \sum_{k=1}^{\ell_n'}  \wb{F}^{*}(\ell_n'-k)=\frac{1}{w_{\ell_n}^2} \sum_{k=0}^{\ell_n'-1}  \wb{F}^{*}(k),
\end{align}
where $\wb{F}^{*}$ is the tail probability for the inter-arrivals of $\vv{\tau}^{(1)}\cap \vv{\tau}^{(2)}$ satisfying $\wb{F}^{*}(k)\rightarrow \mathfrak{q}_{F,2}$ as $k\rightarrow\infty$. 

In the proof we regard $B$ as a shift-invariant Borel set of $\ell_0$ (i.e., if for every $\vv x\in B$, we have $\mathsf B(\vv x)\in B$ where $\mathsf B$ is the shift operator), and regard $\vv Q$ and related processes as random elements in $\ell_0$. 

Next, by first-entrance decomposition, 
\begin{align}
\proba & \pp{\vv Q_{\ell_n,\ell_n',1}^{(1,2)}\in B \mmid R_{\ell_n,1,2}\cap \cal{I}_{\ell_n',1}\neq \emptyset}\notag\\&=\frac{1}{\proba\pp{ R_{\ell_n,1,2}\cap \cal{I}_{\ell_n',1}\neq \emptyset}}\sum_{k=1}^{\ell_n'} \proba\pp{\vv Q_{\ell_n,\ell_n',1}^{(1,2)}\in B \mmid  \min R_{\ell_n,1,2}=k} \proba\pp{ \min R_{\ell_n,1,2}=k}\notag\\
&\sim\left(\sum_{1\le k\le r \ell_n'}     +\sum_{r \ell_n'< k\le   \ell_n'} \right) \frac{w_{\ell_n}^2}{\ell_n' \mathfrak{q}_{F,2}} \proba\pp{\vv Q_{\ell_n,\ell_n',1}^{(1,2)}\in B \mmid  \min R_{\ell_n,1,2}=k} \proba\pp{ \min R_{\ell_n,1,2}=k},  \label{eq:Q cond prob}
\end{align}
where $r\in (0,1)$ and the last asymptotic equivalence is uniform in $B\in\cal{B}(S)$.
We claim that 
\equh\label{eq:Q uniform1}
\limn\sup_{k=1,\dots,r\ell_n'}\sup_{B\in \calB(S)}\abs{\proba\pp{\vv Q_{\ell_n,\ell_n',1}^{(1,2)}\in B \mmid  \min R_{\ell_n,1,2}=k} -\proba\pp{\vv Q \in B  }} = 0, \mfa r\in(0,1). 
\eque Indeed, consider $\vv Q|_{  \ell_n'-k } =\pp{\ind_{\{\ell \in \vv \tau^{(1)}\cap \vv \tau^{(2)}\}}}_{0\le \ell \le \ell_n'-k }$ (recall $\vv\tau\topp 1,\vv\tau\topp2$ are the same i.i.d.~copies of renewal processes starting from 0 defining $\vv Q_{\ell_n,\ell_n',1}^{(1,2)}$, and again $\vv Q_{\ell_n'-k}$ is understood as an element in $S$). So we have
\[
\proba\pp{\vv Q_{\ell_n,\ell_n',1}^{(1,2)}\in B \mmid  \min R_{\ell_n,1,2}=k}=\proba\pp{ \vv Q|_{ \ell_n'-k }   \in B  }.
\]
Note that for $1\le k\le r \ell_n'$, we have
\begin{align*}
\sup_{k\le r\ell_n'}\sup_{B\in\calB(S)}\abs{\proba\pp{ \vv Q    \in B  }-\proba\pp{ \vv Q|_{ \ell_n'-k }   \in B  }}&\le 
\sup_{k\le r\ell_n'}\proba\pp{ \vv Q|_{ \ell_n'-k } \neq \vv Q }\\&\le \proba\pp{ \vv\tau^{(1)}\cap \vv\tau^{(2)}\cap \{\ell\in \N_0: \ell \ge \ell_n'-\floor{r \ell_n'}  \}\neq \emptyset},
\end{align*}
which goes to zero 
as $n\rightarrow\infty$ since $ \vv\tau^{(1)}\cap \vv\tau^{(2)}$ is transient. So \eqref{eq:Q uniform1} holds. With this uniform convergence and the fact that 
 \[
 \proba(\min R_{\ell_n,1,2}\le r \ell_n')\sim \frac{r\mathfrak{q}_{F,2}\ell_n'}{w_{\ell_n}^2},\quad n\rightarrow\infty,
\] 
which follows from \eqref{eq:min intersect last}, we conclude that the first  sum in \eqref{eq:Q cond prob} converges to
\[
  r \proba\pp{\vv Q\in B} 
\]
uniformly for $B\in \cal{B}(S)$.
Furthermore, 
the $\limsup$ of the second  sum in \eqref{eq:Q cond prob}  is similarly bounded by  $1-r$ (bounding the probability involving $\vv Q_{\ell_n,\ell_n',1}^{(1,2)}$ by $1$). The desired \eqref{eq:Q uniform} now follows by letting $r\uparrow 1$.
\end{proof}
\section{Some related results}\label{sec:other}
We first relate the law of one-sided spectral tail process $\vv\Theta$ in \eqref{eq:Theta_k} to the law of the conditional spectral tail process $\vv Q$. Notice first that $\vv \Theta$ in \eqref{eq:Theta_k} extends,  uniquely in law,  to a two-sided form $\pp{\Theta_k}_{k\in \Z}$ by setting $\Theta_k=\ind_{\{-k\in \vv\tau\topp 3 \cap \vv\tau\topp 4  \}}$ for $k<0$ where $\vv\tau\topp{i}, i=1,2,3,4$ are i.i.d.~renewal processes. This follows from the an extension of the limit relation \eqref{eq:micro} to include negative $k$ indices, which is straightforward and hence omitted. We let $\vv\Theta$ denote the one-sided tail process as before and $\vv\Theta^{\leftrightarrow} := (\Theta_k)_{k\in\Z}$ denote the two-sided spectral tail process.
Next, following \citep[Definition 5.4.6]{kulik20heavy}, the law of $\vv Q$ is uniquely determined by the following:
\equh\label{eq:5.4.6}
\esp H(\vv Q) = \frac1{\vartheta_0 }\esp\pp{\frac{H(\vv\Theta^\leftrightarrow)}{\sum_{k\in\Z}\Theta_k}},
\eque
for every bounded or non-negative measurable function $H$ on $S$; here we used the fact that $\vv\Theta^\leftrightarrow$ is a zero-one valued process. Recall also $\vartheta_0 = \mathsf q_{F,2} = \proba(\Theta_k = 0, \mfa k\in\N)$. 
We have the following.
\begin{Lem}\label{lem:Q = Theta}
The law of $\vv Q$ (viewed as a random element in $S$ and determined by $\vv\Theta^\leftrightarrow$ via \eqref{eq:5.4.6}), is the same as the law of $\vv\Theta$.  
\end{Lem}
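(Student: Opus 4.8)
The plan is to test the asserted equality of laws on $S$ against an arbitrary bounded (or nonnegative) measurable $H$, using the defining change of measure \eqref{eq:5.4.6} together with the explicit description of $\vv\Theta^{\leftrightarrow}$ as built from two \emph{independent} renewal intersections. Recall the two-sided extension: $\vv\Theta^{\leftrightarrow}=(\Theta_k)_{k\in\Z}$ is the indicator sequence of the random set $A_+\cup(-A_-)\subset\Z$, where $A_+:=\vv\tau\topp{1}\cap\vv\tau\topp{2}$ and $A_-:=\vv\tau\topp{3}\cap\vv\tau\topp{4}$ are independent copies of the intersection renewal $I\subset\N_0$ with $0\in I$, whose successive gaps are i.i.d.\ with a (defective) law $G$ on $\N\cup\{\infty\}$ satisfying $\proba(G=\infty)=\mathfrak q_{F,2}$; in particular $|I|$ is geometric with parameter $\mathfrak q_{F,2}$ and is a.s.\ finite (\cite[Section 2.1]{bai23tail}, and \eqref{eq:qFp}). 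Hence $\sum_{k\in\Z}\Theta_k=|A_+|+|A_-|-1=:N$ is a.s.\ finite, and by convolving two geometrics, $\proba(N=n)=n\,\mathfrak q_{F,2}\,\proba(|I|=n)$ for $n\ge 1$. Since every $H$ on $S$ is shift-invariant and $\Theta_0=1$, \eqref{eq:5.4.6} reads $\esp H(\vv Q)=\mathfrak q_{F,2}^{-1}\,\esp\bigl[H(\vv\Theta^{\leftrightarrow})/N\bigr]$, using $\vartheta_0=\mathfrak q_{F,2}$.

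The crux is to identify the law of the class $[\vv\Theta^{\leftrightarrow}]\in S$, equivalently of $A_+\cup(-A_-)$ up to translation. Translating by $\max A_-$ turns this set into $B\cup(b^*+C)$, where $C:=A_+$, $B:=\max A_- - A_-$ is the reflection of $A_-$, and $b^*:=\max B=\max A_-$. By time-reversal invariance of the intersection renewal — conditionally on $|I|=m$ the finite gap sequence of $I$ is $m-1$ i.i.d.\ copies of $(G\mid G<\infty)$, hence invariant under reversal, so $\max I-I\eqd I$ — we get $B\eqd I$, and $B$ is independent of $C$ because $A_-\perp A_+$; thus $(B,C)$ is a pair of independent copies of $I$. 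Reading off consecutive gaps from left to right, $B\cup(b^*+C)$ has, conditionally on $|B|+|C|-1=n$, exactly $n-1$ i.i.d.\ gaps distributed as $(G\mid G<\infty)$, irrespective of how $n+1$ splits as $|B|+|C|$; this is precisely the conditional law of $I$ given $|I|=n$.

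Consequently, for every shift-invariant nonnegative $g$ (identifying a finite nonempty subset of $\Z$ with its indicator class in $S$),
\[
\esp\, g\bigl([\vv\Theta^{\leftrightarrow}]\bigr)=\sum_{n\ge 1}\proba(N=n)\,\esp\bigl[g(I)\mid |I|=n\bigr]=\mathfrak q_{F,2}\,\esp\bigl[\,|I|\,g(I)\bigr],
\]
where the second equality uses $\proba(N=n)=n\,\mathfrak q_{F,2}\,\proba(|I|=n)$ and $|I|=n$ on $\{|I|=n\}$. Note $g$ here ranges over shift-invariant functions because $[\vv\Theta^{\leftrightarrow}]$ and $[\mathbf 1_I]$ are determined by their gap patterns; and $|\cdot|$, the number of ones (i.e.\ $\sum_k\Theta_k$), is itself shift-invariant.

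Finally, apply the displayed identity with $g(\cdot)=H(\cdot)/|\cdot|$: the left side is $\esp\bigl[H(\vv\Theta^{\leftrightarrow})/N\bigr]$ and the right side is $\mathfrak q_{F,2}\,\esp[H(I)]$, so $\esp H(\vv Q)=\esp[H(I)]=\esp H(\vv\Theta)$, the last step because $I\eqd A_+$ is exactly the support of the one-sided $\vv\Theta$ (padded with zeros to sit in $\ell_0$). As this holds for all bounded measurable $H$ on $S$, the laws of $\vv Q$ and $\vv\Theta$ on $S$ coincide. The main obstacle is the reflection-and-concatenation identification of $[\vv\Theta^{\leftrightarrow}]$ as (a size-biased-in-cardinality version of) $[\mathbf 1_I]$ together with the cardinality relation $\proba(N=n)=n\,\mathfrak q_{F,2}\,\proba(|I|=n)$; the remainder is bookkeeping between sequences, their supports, and shift-equivalence classes, plus the care that $N<\infty$ a.s.\ (which is where $\mathfrak q_{F,2}\in(0,1)$ enters).
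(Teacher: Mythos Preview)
Your proof is correct and follows essentially the same route as the paper's: both condition on the total cardinality $N=\sum_k\Theta_k$, observe that conditionally on $N=m$ the class $[\vv\Theta^\leftrightarrow]\in S$ has $m-1$ i.i.d.\ gaps with law $\calL(G\mid G<\infty)$ (hence matches $[\vv\Theta]$ given $|I|=m$), and use the negative-binomial identity $\proba(N=m)=m\,\mathfrak q_{F,2}\,\proba(|I|=m)$ to cancel the $1/N$ weight in \eqref{eq:5.4.6}. Your reflection-and-concatenation argument simply makes explicit the step the paper compresses into one sentence (``both of them consist of $m$ consecutive i.i.d.\ inter-renewals'').
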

\begin{proof}
 Let
$N_{\vv\Theta^\leftrightarrow} := \sum_{k\in\Z}\Theta_k$ denote the total number of renewals of $\vv\Theta^\leftrightarrow$ and similarly $N_{\vv\Theta} := \sif k0\Theta_k$. 
 When both $\vv \Theta^\leftrightarrow$ and $\vv \Theta$ are viewed as   random elements in $S$, with $H$   as in \eqref{eq:5.4.6},  it suffices to show for each $m\in \N$ that
 \begin{equation}\label{eq:condspec goal}
 \frac{1}{\vartheta_0}\esp\pp{\frac{H(\vv\Theta^\leftrightarrow)}{m}\mmid  N_{\vv\Theta^\leftrightarrow}= m } \proba\pp{N_{\vv \Theta} = m} = \esp\pp{H(\vv\Theta) \mid  N_{\vv\Theta}= m }\proba\pp{N_{\vv\Theta} = m}. 
 \end{equation}
 Note that in $S$, the law of $\vv\Theta^\leftrightarrow$ conditioning on $\{N_{\vv Q^\leftrightarrow}=m\}$ 
   is equal to law of $\vv\Theta$ conditioning on $\{N_{\vv Q}=m\}$. This is because modulo a shift,   both  of them consist of   $m$ consecutive i.i.d.\ inter-renewals with a law  $\calL( T \mid T<\infty )$, where $T=\inf \spp{\vv\tau^{(1)}\cap \vv\tau^{(2)}\cap \N_+}$ is the inter-renewal time (recall $\proba\pp{T=\infty}= \frak{q}_{F,2}=\vartheta_0$).  Therefore, to show \eqref{eq:condspec goal}, it suffices to show 
 $ \proba\pp{N_{\vv\Theta^\leftrightarrow} = m}= \vartheta_0 m   \proba\pp{N_{\vv\Theta} = m}$. To do so, first note that  $N_{\vv\Theta^\leftrightarrow}\EqD N_{\vv \Theta}+N_{\vv \Theta}'-1$, where $N_{\vv \Theta}'$ is an independent copy of $N_{\vv \Theta}$, each following a geometric distribution as described below \eqref{eq:qFp}. Then,
  \begin{align*}
 \proba\pp{N_{\vv\Theta^\leftrightarrow} = m} &=\sum_{j = 0}^{m-1} \proba\pp{N_{\vv\Theta} = j+1,N_{\vv\Theta}' = m-j} = \summ j0{m-1}(1-\mathsf q_{F,2})^{m-1}\mathsf q_{F,2}^2\\
 & =   m (1-\mathsf q_{F,2})^{m-1}\mathsf q_{F,2}^2 =  \vartheta_0m\proba\pp{N_{\vv\Theta} = m},
 \end{align*}
 which completes the proof.
\end{proof}
\color{black}

A consequence of Theorem \ref{thm:2'} concerns convergence to the tail process in $\ell_0$.
Recall that the two-sided tail process $\vv Y=\pp{Y_k}_{k\in \Z}$ associated with $(X_k)_{k\in\Z}$ is given by  $Y_k=L_\alpha \Theta_k$,  and $L_\alpha$ is a standard $\alpha$-Pareto random variable, namely, $\P\pp{L_\alpha>x}=x^{-\alpha}$, $x>1$, which is independent of everything else.  
The tail process $\vv Y$ is related to $(X_k)_{k\in\Z}$  through
\[
\calL\pp{\frac{(X_{-d},\dots,X_d)}{b_n}\mmid X_0>b_n} \to \calL\pp{(Y_{-d},\dots,Y_d)}, \mfa d\in\N.
\]
A priori, a tail process $\vv Y$ of a general regularly varying process may not be in $\ell_0$, but it is obviously so in our example. Then, by a triangular-array argument \citep[Lemma 6.1.1]{kulik20heavy}, it follows that there exists {\em certain} block-length sequence $\{d_n\}_{n\in\N}$ tending to $\infty$, such that for any $y>0$,
\equh\label{eq:TP dn}
\calL\pp{\frac{(\dots,0,X_{-d_n},\dots,X_{d_n},0,\dots)}{b_n y}\mmid X_0>b_n y} \to \calL\pp{\vv Y} 
\eque
as $n\rightarrow\infty$.
The triangular-array argument does not provide any information on the rate of $d_n\to\infty$ in \eqref{eq:TP dn}, and hence \eqref{eq:TP dn} might not hold for a specific growth rate a priori. Theorem  \ref{thm:2'} now implies an upper bound for the growth rate in our example.
This is closely related to the fact that the rate in \eqref{eq:TP dn} must satisfy the anticlustering condition. It is natural to expect that $d_n$ may not grow too fast. On the other hand, once at certain rate the above convergence holds, with a slower rate the convergence remains to hold. 
\begin{Coro}
For \eqref{eq:TP dn} to holds, the block-length sequence $\{d_n\}_{n\in\N}$ must grow slower than any polynomial rate.
\end{Coro}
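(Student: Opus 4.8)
The plan is to show that the convergence \eqref{eq:TP dn} forces the anticlustering condition $\AC(d_n,b_n)$ to hold, and then to read off the conclusion from Corollary \ref{Cor:ac fails}, whose contrapositive says precisely that no sequence with $d_n\gg n^\epsilon$ can satisfy $\AC(d_n,b_n)$.

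The main step is the implication ``\eqref{eq:TP dn} $\Rightarrow\AC(d_n,b_n)$'', which I would carry out as follows. Fix $x,y>0$ and set $\vv Z^{(n)}:=(\dots,0,X_{-d_n},\dots,X_{d_n},0,\dots)/(b_ny)$, so that by \eqref{eq:TP dn} the conditional law $\calL(\vv Z^{(n)}\mid X_0>b_ny)$ converges weakly in $\ell_0$ to $\calL(\vv Y)$. For each fixed $m\in\N$ the functional $g_m(\vv z):=\sup_{|j|\ge m}z_j$ on $\ell_0$ is $1$-Lipschitz for the sup-norm, hence continuous, so the set $\{g_m\ge x/y\}$ is closed. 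Since the process is nonnegative and $\vv Z^{(n)}$ vanishes outside $\{|j|\le d_n\}$, one has the exact identity $g_m(\vv Z^{(n)})=(b_ny)^{-1}\max_{m\le|j|\le d_n}X_j$ for $n$ large, and Portmanteau then yields
\[
\limsupn\proba\pp{\max_{m\le|j|\le d_n}X_j>b_nx\mmid X_0>b_ny}\le\proba\pp{\sup_{|j|\ge m}Y_j\ge x/y}.
\]
Because $\vv Y\in\ell_0$ almost surely with $Y_k\ge0$, the quantities $\sup_{|j|\ge m}Y_j$ decrease a.s.\ to $\limsup_{|j|\to\infty}Y_j=0$, so the right-hand side vanishes as $m\to\infty$; this is exactly $\AC(d_n,b_n)$.

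Finally I would invoke Corollary \ref{Cor:ac fails}: $\AC(d_n,b_n)$ is impossible as soon as $d_n\gg n^\epsilon$ for some $\epsilon>0$, so $\{d_n\}$ must grow slower than any polynomial rate; a minor monotonicity argument, as at the end of the proof of Corollary \ref{Cor:ac fails} (if $d_n/n^\epsilon\not\to0$ along a subsequence, replace $d_n$ by the smaller sequence $d_n\wedge\lceil n^{\epsilon/2}\rceil$, which is of polynomial order along that subsequence, and derive a contradiction with \eqref{eq:block EVT}), promotes this to $d_n=o(n^\epsilon)$ for every $\epsilon>0$. I do not expect a serious obstacle: the only points needing care are the bookkeeping that turns the supremum over the infinite zero-padded sequence into the finite maximum $\max_{m\le|j|\le d_n}X_j$ (which is where nonnegativity of $(X_k)$ enters, together with the standard continuity of the supremum functional on $\ell_0$) and the routine subsequence/monotonicity elaboration in the last step.
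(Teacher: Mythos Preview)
Your proposal is correct and follows essentially the same route as the paper: deduce $\AC(d_n,b_n)$ from \eqref{eq:TP dn} and then invoke Corollary~\ref{Cor:ac fails}. Your write-up is in fact more careful than the paper's two-line proof---you spell out the Portmanteau step explicitly and you notice (and patch, via the $d_n\wedge\lceil n^{\epsilon/2}\rceil$ subsequence trick) the minor gap between ``$d_n\not\gg n^\epsilon$'' and ``$d_n=o(n^\epsilon)$'' that the paper glosses over.
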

\begin{proof}
We prove by contradiction. Suppose \eqref{eq:TP dn} holds. Then, it follows that for any $x,y>0$,
\[
\limn\proba\left(  \max_{m\le |j|\le d_n} X_j > b_n x  \mmid  X_0 > b_n  y  \right) = \proba \left(  \sup_{|j|\ge m} Y_j>\frac xy\right).
\]
But the right-hand side tends to $0$ as $m\rightarrow\infty$ since almost surely $Y_j=0$ for large enough $j$. This shows that  $\mathcal{AC}(d_n,b_n)$ holds which contradicts with Corollary \ref{Cor:ac fails}. 
\end{proof}
 Below write $M_{d_n}=\max_{k=1,\dots,d_n}X_k$ {and $\vv x_{1:d}$ for $(x_1,\ldots,x_d)$.}

\begin{Coro} 
Following the  setup of Theorem \ref{thm:2}, 
as $n\rightarrow\infty$, we have the following weak convergences in the space $S$: as $n\rightarrow\infty$,
\[
\cal{L}\left(\frac{\vv X_{1:d_n}}{ b_n }   \mmid M_{d_n}>  b_n \right)  \Rightarrow 
  \cal{L}( L_\alpha \vv Q   ),
\]
and
\[
\cal{L} \left(\frac{\vv  X_{1:d_n}}{M_{d_n}}  \mmid M_{d_n} >b_n \right)  \Rightarrow 
\cal{L}( \vv Q   ).
\]
\end{Coro}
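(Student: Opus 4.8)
The plan is to derive both weak convergences from the vague$^\#$ convergence $\vv\nu_{d_n}\overset{v^\#}{\longrightarrow}\vv\nu^{(\rho)}$ of Theorem~\ref{thm:2}, using the standard identification of ``conditioning a block on a high exceedance'' with restricting and renormalizing the cluster measure. First I would record that, by \eqref{eq:Q = Theta}, $\vv Q=(\Theta_k)_{k\in\N_0}$ is $\{0,1\}$-valued with $\Theta_0=1$, so that $\snn{\vv Q}_\infty=1$ almost surely. Writing $A_r:=\{[\vv x]\in S:\snn{\vv x}_\infty>r\}$, each $A_r$ lies in $\wt{\calB}_0$, and $\snn{\vv Q}_\infty=1$ a.s.\ immediately gives $\vv\nu^{(\rho)}(A_r)=\vartheta_\rho r^{-\alpha}$ together with $\vv\nu^{(\rho)}(\partial A_r)=\vartheta_\rho\int_0^\infty\indd{r'=r}\,\alpha (r')^{-\alpha-1}\,dr'=0$; thus every $A_r$ is a $\vv\nu^{(\rho)}$-continuity set.

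For the first convergence I would fix a bounded continuous $H\colon S\to\R$ together with a cutoff $\psi\in\Clipb(S)$ with $0\le\psi\le1$, $\psi\equiv1$ on $\{\snn{\vv x}_\infty\ge1\}$ and $\psi\equiv0$ on $\{\snn{\vv x}_\infty\le 1/2\}$, so that $H\psi$ is bounded continuous with $\wt{\calB}_0$-bounded support. Since $\{M_{d_n}>b_n\}=\{\snn{\vv X_{1:d_n}/b_n}_\infty>1\}$ and $X_1>0$ a.s.\ by \eqref{eq:p>=1}, the definition of $\vv\nu_{d_n}$ gives
\[
\esp\left[H\left(\frac{\vv X_{1:d_n}}{b_n}\right)\mmid M_{d_n}>b_n\right]=\frac{\vv\nu_{d_n}\big((H\psi)\ind_{A_1}\big)}{\vv\nu_{d_n}(A_1)}.
\]
Here the denominator converges to $\vv\nu^{(\rho)}(A_1)=\vartheta_\rho$ --- this is \eqref{eq:block EVT} at $x=1$, or the vague$^\#$ convergence applied to the continuity set $A_1$. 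The numerator converges to $\vv\nu^{(\rho)}\big((H\psi)\ind_{A_1}\big)$ by a routine sandwich: for $\eta>0$ choose $g\le\ind_{A_1}\le h$ in $\Clipb(S)$ supported in $A_{1/2}$ with $\vv\nu^{(\rho)}(h-g)<\eta$ (possible since $\vv\nu^{(\rho)}(\partial A_1)=0$), apply the definition of vague$^\#$ convergence to $(H^\pm\psi)g,(H^\pm\psi)h\in\Clipb(S)$, and let $n\to\infty$, then $\eta\downarrow0$. Since $\psi\equiv1$ on $A_1$ and $\snn{\vv Q}_\infty=1$ a.s.,
\[
\frac{\vv\nu^{(\rho)}\big((H\psi)\ind_{A_1}\big)}{\vartheta_\rho}=\int_0^\infty\esp\big[H(r\vv Q)\indd{r>1}\big]\,\alpha r^{-\alpha-1}\,dr=\int_1^\infty\esp[H(r\vv Q)]\,\alpha r^{-\alpha-1}\,dr=\esp\big[H(L_\alpha\vv Q)\big],
\]
with $L_\alpha$ standard $\alpha$-Pareto and independent of $\vv Q$. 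As $H$ was an arbitrary bounded continuous function on the Polish space $S$, this is the first assertion.

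The second convergence then follows from the first by the continuous mapping theorem applied to the normalization map $T_0\colon S\to S^*$, $[\vv x]\mapsto[\vv x]/\snn{\vv x}_\infty$; this map is continuous because $\snn{\cdot}_\infty=\wt d(\cdot,[\vv0])$ is continuous and strictly positive on $S$ and scalar multiplication is continuous on $\wt\ell_0$ (here $S^*$ is as in \eqref{eq:T}). Indeed, on $\{M_{d_n}>b_n\}$ one has $\vv X_{1:d_n}/M_{d_n}=T_0(\vv X_{1:d_n}/b_n)$, while $T_0(L_\alpha\vv Q)=\vv Q/\snn{\vv Q}_\infty=\vv Q$ a.s., so $\calL(\vv X_{1:d_n}/M_{d_n}\mid M_{d_n}>b_n)\Rightarrow\calL(\vv Q)$. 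The only genuinely non-routine point is the bookkeeping with the quotient space $S$ and its boundedness $\wt{\calB}_0$ when invoking $\overset{v^\#}{\longrightarrow}$: one must pass from test functions in $\Clipb(S)$ to the discontinuous integrand $\ind_{A_1}$ --- exactly the portmanteau/continuity-set step above --- and verify that $\vv\nu^{(\rho)}$ gives no mass to $\partial A_1$, both of which become immediate once $\snn{\vv Q}_\infty=1$ a.s.\ is observed. Conceptually this corollary merely records that $\vv Q$ is the conditional spectral tail process; cf.\ \cite[Section~6.2]{kulik20heavy}.
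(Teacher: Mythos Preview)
Your proof is correct and follows essentially the same route as the paper's: both derive the first convergence from the vague$^\#$ convergence $\vv\nu_{d_n}\overset{v^\#}{\longrightarrow}\vv\nu^{(\rho)}$ of Theorem~\ref{thm:2} together with \eqref{eq:block EVT} for the denominator, and then obtain the second convergence by continuous mapping via the normalization $[\vv x]\mapsto[\vv x]/\snn{\vv x}_\infty$. The only cosmetic difference is that the paper tests against the polar-form indicators $1_{\{\snn{\vv x}_\infty>u,\ \vv x/\snn{\vv x}_\infty\in B\}}$ (and cites \cite{basrak18invariance} for the continuous-mapping step), whereas you test against general bounded continuous $H$ via a cutoff-and-sandwich portmanteau argument; one minor point is that your claim $(H^\pm\psi)g\in\Clipb(S)$ strictly requires $H$ to be Lipschitz, which is harmless since bounded Lipschitz functions determine weak convergence on the Polish space $S$.
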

\begin{proof}
Consider $H(\vv x) = 1_{\{\|\vv x\|_\infty>u, \ \vv x/ \|\vv x\|_\infty\in B \}}  $  for $u>1$ and  shift-invariant ${\vv\nu}$-continuity Borel set $B$ in $\ell_0\setminus\{\vv 0\}$,
\[
\frac{\esp\left( H(\vv X_{1:d_n}/  b_n)    , M_{d_n}>  b_n \right)}{\proba\pp{M_{d_n}>  b_n }}\sim \frac{k_n\esp\left( H(\vv X_{1:d_n}/  b_n) \right)}{\vartheta_\rho} \rightarrow u^{-\alpha} \proba(\vv {Q}\in B),
\]
where we have applied \eqref{eq:block EVT} in the first step and Theorem \ref{thm:2} in the second step. This establishes the first claim of this corollary. 
The second claim follows from the first one through a continuous mapping argument: see the arguments for (2.10) and (2.11) in the proof of \cite[Theorem 2.2]{basrak18invariance}. 
\end{proof}
Suppose  $\{b_n\}_{n\in\N}$ is as in \eqref{eq:b_n}.
  We recall
Leadbetter's celebrated \emph{distributional mixing $D$ condition}     \cite{leadbetter74extreme} as follows.  For $n,\ell\in \N$,  define a mixing coefficient as
\[
\alpha_{n,\ell}=\max_{I_1,I_2}\left| \proba\pp{\max_{k\in I_1\cup I_2}X_k \le b_n}-\proba\pp{\max_{k\in I_1 }X_k \le b_n}\proba\pp{\max_{k\in  I_2}X_k \le b_n}  \right|,
\]
where the first maximum is taken over  subsets $I_1,I_2\subset \{1,\ldots,n\}$ such that $\min I_2-\max I_1=\ell$. Note that $\alpha_{n,\ell}$ is non-increasing in $\ell$.
 The condition $D(b_n,\ell_n)$ (often  only  written as $D(b_n)$)  is said to hold    for some integer  sequence $(\ell_n)_{n\in\N}$ satisfying $\ell_n=o(n)$, if 
$
\alpha_{n,\ell_n} \rightarrow 0$ as $n\rightarrow\infty$. For our model $(X_k)_{k\in\Z}$,  the following result concerns a lower rate restriction of $\alpha_{n,\ell_n}$  when $\ell_n$ has an upper rate restriction.

\begin{Coro}
Fix  an arbitrary   $\rho\in (0,1)$. We have
\[
\limsupn n^{1-\rho}\alpha_{n,\ell_n}>0
\]
for any sequence $(\ell_n)$ satisfying $\ell_n=o(n^{\rho})$. 
\end{Coro}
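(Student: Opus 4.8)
The plan is to argue by contradiction: suppose $\limsup_{n}n^{1-\rho}\alpha_{n,\ell_n}=0$ for some sequence $(\ell_n)$ with $\ell_n=o(n^{\rho})$, which (as $\alpha_{n,\ell_n}\ge0$) means $n^{1-\rho}\alpha_{n,\ell_n}\to0$. The driving idea is that mixing this strong would make the classical blocking argument for the extremal limit theorem valid at every block scale $d_n\asymp n^{\rho'}$ with $\rho\le\rho'<1$, forcing $\proba(\max_{k=1,\dots,n}X_k\le b_n)$ to converge simultaneously to $e^{-\vartheta_{\rho'}}$ for all such $\rho'$; since $\rho'\mapsto\vartheta_{\rho'}=(1-2\rho'\beta)\mathfrak q_{F,2}$ is strictly decreasing (because $\beta>0$), two distinct choices of $\rho'$ already yield a contradiction.

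Carrying this out, I would fix $\rho'\in[\rho,1)$ and set $d_n:=\lfloor n^{\rho'}\rfloor$, $k_n:=\lfloor n/(d_n+\ell_n)\rfloor$. Since $\ell_n=o(n^{\rho})=o(d_n)$, one has $d_n+\ell_n\sim d_n\sim n^{\rho'}$, hence $k_n\sim n^{1-\rho'}$ and $\log d_n/\log n\to\rho'$. Take the blocks $B_j:=\{(j-1)(d_n+\ell_n)+1,\dots,(j-1)(d_n+\ell_n)+d_n\}$, $j=1,\dots,k_n$: these are disjoint subsets of $\{1,\dots,n\}$, consecutive ones separated by a gap of length at least $\ell_n$, and their complement in $\{1,\dots,n\}$ has cardinality $n-k_nd_n=o(n)$. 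By stationarity and \eqref{eq:X_0 tail}, the maximum of $X_k$ over the discarded indices exceeds $b_n$ with probability at most $o(n)\,\proba(X_0>b_n)=o(1)$, so $\proba(\max_{k=1,\dots,n}X_k\le b_n)=\proba(\max_{k\in\cup_jB_j}X_k\le b_n)+o(1)$. Telescoping the inequality defining $\alpha_{n,\cdot}$ across the $k_n$ blocks --- legitimate since $\alpha_{n,\cdot}$ is non-increasing, the index sets involved lie in $\{1,\dots,n\}$, and the level is exactly $b_n$ --- gives $\bigl|\proba(\max_{k\in\cup_jB_j}X_k\le b_n)-\proba(M_{d_n}\le b_n)^{k_n}\bigr|\le k_n\alpha_{n,\ell_n}\le Cn^{1-\rho}\alpha_{n,\ell_n}\to0$. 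Then Theorem \ref{thm:2'} (i.e.\ \eqref{eq:block EVT} at $x=1$, using $\lfloor n/d_n\rfloor\sim k_n$) yields $k_n\,\proba(M_{d_n}>b_n)\to\vartheta_{\rho'}$, whence $\proba(M_{d_n}\le b_n)^{k_n}\to e^{-\vartheta_{\rho'}}$, and therefore $\proba(\max_{k=1,\dots,n}X_k\le b_n)\to e^{-\vartheta_{\rho'}}$.

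Applying the previous paragraph with $\rho'=\rho$ and with $\rho'=(1+\rho)/2\in(\rho,1)$ would show that $\proba(\max_{k=1,\dots,n}X_k\le b_n)$ converges to both $e^{-\vartheta_{\rho}}$ and $e^{-\vartheta_{(1+\rho)/2}}$, impossible since $\vartheta_{\rho}-\vartheta_{(1+\rho)/2}=(1-\rho)\beta\mathfrak q_{F,2}>0$; this contradiction would finish the proof. (Alternatively, a single scale $\rho'=\rho$ already collides with the extremal limit $e^{-\vartheta_1}$ of \eqref{eq:macro}/Theorem \ref{thm:1}, since $\vartheta_\rho>\vartheta_1$.) The one delicate point --- the part I expect to require the most care --- is that Leadbetter's $D$-condition is imposed here only at the single level $b_n$, not at $b_nx$ for all $x>0$, so the entire blocking argument and the final collision of constants have to be run at $x=1$; this is harmless because Theorem \ref{thm:2'} holds at $x=1$ and the constants $\vartheta_{\rho'}$ are already pairwise distinct. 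A lesser nuisance is that $\ell_n=o(n^{\rho})$ does not force $\ell_n$ strictly below $n^{\rho}$ in rate, but this is irrelevant since the argument only needs a block length $d_n\in[n^{\rho},n)$, for which $\ell_n=o(d_n)$ then holds automatically.
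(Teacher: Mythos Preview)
Your proof is correct and follows the same strategy as the paper's: argue by contradiction, combine the assumed fast decay of $\alpha_{n,\ell_n}$ with Leadbetter's blocking argument and Theorem~\ref{thm:2'} to force $\proba(\max_{k\le n}X_k\le b_n)\to e^{-\vartheta_{\rho'}}$ for some $\rho'\in[\rho,1)$, and then reach a contradiction. The paper does this more tersely by citing \cite[Theorem~3.4]{leadbetter83extremesPTRF} for the blocking step and colliding with the known extremal index $\theta=\vartheta_1$ from Corollary~\ref{coro:1'}; you instead write the blocking out by hand and, as a variant, note that two distinct values of $\rho'$ already contradict each other without appealing to the macroscopic limit---a slightly more self-contained route, though the underlying idea is the same.
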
 
\begin{proof}
We prove by contradiction.  Suppose there exists $\rho\in (0,1)$,  such that with the block size   $d_n\sim n^{\rho}$ and the number of blocks   $k_n\sim n^{1-\rho}$,  we have for some $\ell_n=o(n^{\rho})$, that the relation $ \lim_n k_n\alpha_{n,\ell_n}=0$ holds instead.   The family of  sequences $u_n(\tau)$, $\tau>0$, that satisfies the relation (2.6) of \cite{{leadbetter83extremesPTRF}} can be chosen to be $b_n \tau^{-1/\alpha}$ in view of \eqref{eq:X_0 tail}. Applying \cite[Theorem 3.4]{leadbetter83extremesPTRF} and  Theorem \ref{thm:2'} above (the latter verifies relation (3.8) of \cite{{leadbetter83extremesPTRF}}),  we conclude that the extremal index of $(X_k)_{k\in\Z}$ is $\vartheta_\rho$. This, however, contradicts with the fact that $\vartheta_\rho<\theta$ for $\rho\in (0,1)$ and Corollary  \ref{coro:1'}.
\end{proof}

\bibliographystyle{apalike}
\bibliography{references,references18}
\end{document}